\numberwithin{equation}{section}
\newcommand{\mycomment}[1]{}
\newcommand{\mres}{\mathbin{\vrule height 1.6ex depth 0pt width
0.13ex\vrule height 0.13ex depth 0pt width 1.3ex}}
\DeclareMathOperator{\Span}{span}
\DeclareMathOperator{\Int}{int}
\DeclareMathOperator{\dist}{dist}
\DeclareMathOperator{\seg}{seg}
\DeclareMathOperator{\rec}{rec}
\newcommand{\monge}{\mathcal M}
\newcommand{\rn}{\mathbb R^n}
\newcommand{\sn}{ {S^{n-1}}}
\newtheorem{lemma}{Lemma}[section]
\newtheorem{theorem}[lemma]{Theorem}
\newtheorem{conjecture}[lemma]{Conjecture}
\newtheorem{defi}[lemma]{Definition}
\newtheorem{coro}[lemma]{Corollary}
\newtheorem{prop}[lemma]{Proposition}
\newtheorem{problem}[lemma]{Problem}
\newtheorem{remark}[lemma]{Remark}
\title[Surface area measure for noncompact convex sets]{The growth rate of surface area measure for noncompact convex sets with prescribed asymptotic cone} 
\author[V. Semenov]{Vadim Semenov}
\address{Division of Applied Mathematics,
Brown University,
Providence, RI 02912, USA}
\email{vadim\_semenov@brown.edu}
\author[Y. Zhao]{Yiming Zhao}
\address{Department of Mathematics,
Syracuse University,
Syracuse, NY 13244, USA}
\email{yzhao197@syr.edu}
\subjclass{52A38, 52A40, 35J96}
\keywords{
the Minkowski problem, asymptotic cone, Monge-Amp\`ere equation}
\begin{document}
\begin{abstract}
The Minkowski problem for a class of unbounded closed convex sets is considered. This is equivalent to a Monge-Amp\`{e}re equation on a bounded convex open domain with possibly non-integrable given data. A complete solution (necessary and sufficient condition for existence and uniqueness) in dimension 2 is presented. In higher dimensions, partial results are demonstrated.
\end{abstract}

\thanks{Research of Zhao was supported, in part, by NSF Grants DMS--2132330 and DMS-2337630.}

\maketitle

\section{Introduction}

Isoperimetric inequalities for geometric invariants such as \emph{volume}, or, more generally, \emph{quermassintegrals} and \emph{dual quermassintegrals}, and Minkowski problems for geometric measures such as \emph{surface area measure}, or, more generally, \emph{area measures} and \emph{dual curvature measures}, are two critical ingredients in the study of convex bodies (\emph{compact} convex subsets of $\rn$). This line of research had its birth at the turn of the last century, beginning with crucial discoveries such as \emph{Steiner's formula} revealing that the volume functional behaves like a polynomial and the celebrated \emph{Brunn-Minkowski inequality} revealing the log-concavity of the volume functional. These fundamental facts are now pillars of what is known as \emph{the Brunn-Minkowski theory} of convex bodies and their equally influential counterparts (\emph{the $L_p$} and \emph{the dual Brunn-Minkowski theory}) and their tenacles reach various (and sometimes seemingly unrelated) areas of mathematics. See, for example, the beautiful survey \cite{MR1898210} by Gardner on the Brunn-Minkowski inequality.

Around the same time, Minkowski asked whether a given \emph{finite} Borel measure on the unit sphere can be realized as the \emph{surface area measure} of a convex body. In differential geometry, this is known as the problem of prescribing Gauss curvature whereas in PDE, this takes the form of the Monge-Amp\`ere equation on the unit sphere. The classical Minkowski problem has motivated much of the study of fully nonlinear partial differential equations, as demonstrated by the works of Minkowski \cite{MR1511220}, Aleksandrov \cite{MR0001597}, Cheng-Yau \cite{MR0423267}, Pogorelov \cite{MR0478079}, and Caffarelli \cite{MR1005611, MR1038359,MR1038360} throughout the last century. It is perhaps not surprising that the Minkowski problem goes hand-in-hand with the Brunn-Minkowski inequality. As shown by Aleksandrov, surface area measure can be viewed as the ``differential'' of the volume functional.  Over the past three decades, many varieties of Minkowski-type problems have been studied. See the surveys \cite{MR4654477} by B\"or\"oczky and \cite{HYZ} by Huang-Yang-Zhang on Minkowski-type problems.

The fact that convex bodies are \emph{compact} and therefore having finite volume plays a critical role in Steiner's formula and the Brunn-Minkowski inequality. However, based on mere convexity, one may still define the surface area measure of an \emph{unbounded} convex body---with one important caveat: one can no longer ``differentiate'' volume (now \emph{infinite}) to get surface area measure (now an \emph{infinite} measure). The classical problem of Minkowski still makes sense: given a possibly \emph{infinite} Borel measure, when can it be realized as the surface area measure of an \emph{unbounded} convex body? It will become clear that, from a PDE point of view, it is natural to prescribe a fixed \emph{asymptotic cone}. It is the aim of this paper to provide a full resolution of this problem when the prescribed asymptotic cone is a \emph{pointed} cone in dimension 2, and to provide some partial results in higher dimensions. 

A subset $C\subset \rn$ is a \emph{cone} if $x\in C$ implies $\lambda x\in C$ for any $\lambda \geq 0$. The cone $C$ is \emph{pointed} if $C$ does not contain any line (infinite in both directions). Throughout the entire paper, unless otherwise specified, $C$ is an arbitrarily fixed pointed, closed, convex cone with a nonempty interior. The \emph{dual cone} of $C$, denoted by $C^\circ$, is given by
\begin{equation}
	C^{\circ}=\{y\in \rn: \langle x,y\rangle\leq 0, \text{ for all } x\in C\}.
\end{equation}
It is well known that $(C^\circ)^\circ=C$. A cone is uniquely determined by its intersection with the unit sphere. We will write
\begin{equation}
	\Omega_{C^\circ} = \text{int}\, C^\circ \cap \sn.
\end{equation}
When the context is clear, we will simply write $\Omega=\Omega_{C^\circ}$.  We will use $\delta_{\partial\Omega}(v)$ to denote the spherical distance between a unit vector $v\in \sn$ and the boundary $\partial \Omega$. For each $\alpha>0$, define
	\begin{equation}\label{eq 11.24.3}
		\omega_\alpha = \{v\in \Omega: \delta_{\partial\Omega}(v)> \alpha\}.
	\end{equation}

A nonempty closed convex set $K$ is called a \emph{pseudo cone} if for every $x\in K$, we have $\lambda x\in K$ for all $\lambda\geq 1$. Note that the concept of pseudo cones appeared in Xu-Li-Leng \cite{MR4484788} where the concept of duality within this class is introduced and uniquely characterized. See, also, Artstein-Avidan, Sadovsky \& Wyczesany \cite{MR4588159} where the concept of duality is studied in very general settings. The \emph{recession cone}, $\rec K$, of $K$ is given by
\begin{equation}
	\rec K=\{y\in \rn: x+\lambda y\in K, \text{ for all }x\in K, \lambda\geq 1\}. 
\end{equation}
If $\rec K=C$, we refer to $K$ as \emph{$C$-pseudo}. Intuitively, $C$-pseudo sets are those \emph{unbounded} closed convex sets whose asymptotic behaviors at the infinity are captured by $C$. It is worth pointing out that if a pseudo cone $K$ is $C$-pseudo, then $K\subset C$. Special cases of $C$-pseudo cones include \emph{$C$-full} sets, \emph{$C$-close} sets, and \emph{$C$-asymptotic} sets. A $C$-pseudo set $K$ is $C$-full if $C\setminus K$ is a bounded set. It is $C$-close if $C\setminus K$ has finite volume. It is $C$-asymptotic if $\dist(x,\partial C)\rightarrow 0$ for all $x\in \partial K$ as $|x|\rightarrow \infty$. It is obvious that
\begin{equation}
	\{\text{$C$-full sets}\}\subset \{\text{$C$-close sets}\}\subset \{\text{$C$-asymptotic sets}\}\subset \{\text{$C$-pseudo sets}\}.
\end{equation}
These subclasses of $C$-pseudo sets appear to have connections with many areas of mathematics. Khovanski\u{\i}-Timorin \cite{MR3279551} explains the connection between $C$-full sets and questions from algebraic geometry and singularity theory. Within the class of $C$-asymptotic sets lies an old conjecture of Calabi \cite{MR0365607}; see also Schneider \cite[Page 283]{MR4501642}.

In a series of papers \cite{MR3810252, MR4264230, schneiderpseudocones} by Schneider, he demonstrated that the classical theory of Brunn-Minkowski for \emph{compact} convex bodies can be naturally extended to the class of $C$-close sets by leveraging on the fact that for $C$-close sets, the volume $V(C\setminus K)$ is finite. The Brunn-Minkowski inequality as well as Steiner's formula can be established in this setting. However, for $C$-pseudo sets other than $C$-close sets,  a Brunn-Minkowski inequality, or Steiner's formula, does not really make sense as the volume functional could be infinite. Fortunately, one can still define the surface area measure of a $C$-pseudo set and therefore the boundary shape of a $C$-pseudo set might still be captured by answering its Minkowski problem. 

For each $\eta\subset \Omega$, define the \emph{inverse Gauss image} of a $C$-pseudo set $K$ by 
\begin{equation}
	\tau_K(\eta) = \{x\in \partial K: \langle x, v\rangle=h_K(v), \text{ for some } v\in \eta\}.
\end{equation}
Here $h_K$ is the support function (see, \eqref{eq 10.13.4}).
It is well-known that by convexity, if $\eta$ is a Borel set, then $\tau_K(\eta)$ is $\mathcal{H}^{n-1}$-measurable. It is also worth pointing out that since almost all points on $\partial K$ have a unique outer unit normal, $\mathcal{H}^{n-1}$-almost all points in $\tau_K(\Omega)$ are in $\Int C$. Analogous to the case of \emph{compact} convex body, we define the surface area measure of a $C$-pseudo set $K$, denoted by $S_K$, as the Borel measure on $\Omega$ given by 
 \begin{equation}
 	S_K(\eta)=\mathcal{H}^{n-1}(\tau_K(\eta)), \text{ for each Borel set } \eta\subset \Omega.
 \end{equation}
 
 We study the following problem.
 
\begin{problem}[The Minkowski problem for $C$-asymptotic sets]\label{problem 1}
 Let $C$ be a pointed, closed convex cone in $\rn$ with nonempty interior. What are the necessary and sufficient conditions on a given possibly \emph{infinite} Borel measure $\mu$ on $\Omega$ so that there exists a $C$-asymptotic set $K$ such that $\mu=S_K$? Moreover, if a solution exists, is it unique?
\end{problem}

It will be shown in Theorem \ref{thm 5.20.1} that the requirement that $K$ is $C$-asymptotic is equivalent to prescribing the boundary value of its support function. Therefore, with enough additional regularity assumptions, the previous problem reduces to a Monge-Amp\`ere equation with Dirichlet boundary value condition: if $\mu=f(v)dv$, solve
 \begin{equation}\label{eq 11.24.1}
 	\begin{cases}
 		\det(\nabla^2 u(x)) = (1+|x|^2)^{-\frac{n+1}{2}}f\left(\frac{(1,x)}{\sqrt{1+|x|^2}}\right), \text{ on } \widetilde{\Omega}\subset \mathbb{R}^{n-1},\\
 		u|_{\widetilde{\Omega}}=0.
 	\end{cases}
 \end{equation}
 Here $u(x)=h_K(1,x)$ is a convex function on $\widetilde{\Omega}$, a bounded open convex subset of $\mathbb{R}^{n-1}$ related to $\Omega$ via a smooth bijection. Details of this will be carried out in Section \ref{section relation with PDE}. We emphasize again that the right-side of \eqref{eq 11.24.1} is possibly non-integrable (on 
 $\widetilde{\Omega}$).
  
 While Problem \ref{problem 1} and the existence and uniqueness of the weak solution to  \eqref{eq 11.24.1} are equivalent, we point out that the domain of the boundary-value problem \eqref{eq 11.24.1} is in the dual space\footnote{although, we usually identify the dual space of $\rn$ with $\rn$} of the space where $K$ in Problem \ref{problem 1} lives. Hence, the formulation of Problem \ref{problem 1} has the advantage of naturally exposing the geometry of $K$.
 
 The Minkowski problem for $C$-asymptotic sets was first posed by Aleksandrov \cite[pp. 344--345]{MR2127379}. However, the problem is much more complicated than what it initially meets the eye: Aleksandrov \emph{falsely} claimed that no nontrivial condition needs to be satisfied by $\mu$ and that the proof ``can be carried out by passing the limit from polyhedra''. It was observed by Schneider \cite{MR4264230} (see Theorem \ref{thm 11.24.1}) that a growth condition of $\mu$ near $\partial \Omega$ is necessary. Previous to that, in the form of \eqref{eq 11.24.1}  under additional smoothness assumptions, Chou-Wang \cite{MR1391950} showed that a different growth condition is sufficient. See, also, Bakelman \cite{MR1305147} and Pogorelov \cite{MR0180763}. It is important to note that the fact that $\mu$ has a smooth density function is essential in the sufficient condition, as well as its proof, given by Chou-Wang \cite{MR1391950}. 
 
 We also remark that when $\mu$ is a \emph{finite} measure and $K$ is $C$-close, Problem \ref{problem 1} was answered in Schneider \cite{MR3810252, MR4264230} using a variational argument and an approximation scheme. In Section \ref{section relation with PDE}, we demonstrate that by observing the equivalence between Problem \ref{problem 1} and \eqref{eq 11.24.1}, both the existence and uniqueness of the solution to Problem \ref{problem 1}, when $\mu$ is a \emph{finite} measure, are consequences of classical PDE theory. This equivalence is well-known among experts (see, for example, Bakelman \cite{MR0096902, MR0827083}), for which we take no credit. Note that in Schneider \cite{MR3810252}, the uniqueness of the solution (when restricted to $C$-close sets) was established as a consequence of a Brunn-Minkowski type inequality. It is natural to wonder whether this Brunn-Minkowski type inequality can be established from the uniqueness of the solution (Theorem \ref{thm uniqueness}) to the Minkowski problem \ref{problem 1}.
 
 We emphasize here that what makes the Minkowski problem for $C$-asymptotic sets or \eqref{eq 11.24.1} challenging is the fact that $\mu$ might be an \emph{infinite} measure or $f$ on the right side of \eqref{eq 11.24.1} might be \emph{non-integrable}. In such cases, classical arguments often fail.
 
 In the current work, we present a complete solution to  Problem \ref{problem 1} in dimension 2.

\begin{theorem}\label{thm main}
	Let $C\subset \mathbb{R}^2$ be a pointed, closed, convex cone with a nonempty interior and $\mu$ be a Borel measure on $\Omega$. There exists a unique $C$-asymptotic set $K\subset C$ such that 
	\begin{equation}
		\mu = S_K \text{ on }\Omega
	\end{equation}
	if and only if
	\begin{equation}\label{eq 11.19.1}
		\int_{0}^\frac{\pi}{2} \mu (\omega_\alpha)d\alpha<\infty.
	\end{equation}
\end{theorem}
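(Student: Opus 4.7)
The plan is to exploit the fact that in dimension two the surface area measure depends \emph{linearly} on the support function, so Problem \ref{problem 1} reduces to a one-dimensional linear boundary-value problem on an interval. Rotate so that $\Omega$ corresponds to the open arc $(-L, L) \subset S^1$ for some $L \in (0, \pi/2)$; then $\delta_{\partial\Omega}(\theta) = L - |\theta|$. Writing $h(\theta) := h_K(\cos\theta, \sin\theta)$, three classical planar facts are the backbone: (i) $K$ is $C$-asymptotic if and only if $h$ extends continuously to $[-L, L]$ with $h(\pm L) = 0$ (Theorem \ref{thm 5.20.1}); (ii) the inequality $h + h'' \geq 0$ in the distributional sense on $(-L,L)$ is exactly the convexity of the $1$-homogeneous extension of $h$, and in that case the distribution $h + h''$ coincides with the surface area measure $S_K$; (iii) $K \subset C$ forces $h \leq 0$ on $\Omega$. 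By Fubini the hypothesis \eqref{eq 11.19.1} is equivalent to
\begin{equation}
\int_{-L}^L (L - |\theta|)\, d\mu(\theta) < \infty.
\end{equation}

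For necessity, test the equation $h + h'' = \mu$ against the tent function $\phi(\theta) = L - |\theta|$. The absolutely continuous piece $\int_{-L}^L \phi(\theta)\, h(\theta)\, d\theta$ is finite because $h$ is continuous, hence bounded, on the compact $[-L, L]$. For the singular piece, integration by parts gives
\begin{equation}
\int_{-L}^L \phi\, dh'' \;=\; [\phi\, h']_{-L}^L + \int_{-L}^L \mathrm{sgn}(\theta)\, h'(\theta)\, d\theta \;=\; -2\, h(0),
\end{equation}
where the boundary term vanishes because $(L - \theta)\, h'(\theta) \to 0$ as $\theta \to L^-$ (and symmetrically at $-L$): near $L$, $h$ is convex with $h \leq 0$ and $h(L) = 0$, so $h'$ is nonnegative and increasing, and $-h(\theta) = \int_\theta^L h'(s)\, ds \geq (L - \theta)\, h'(\theta)$, which tends to $0$.

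For sufficiency, solve $h'' + h = \mu$ on $(-L, L)$ with Dirichlet data $h(\pm L) = 0$ via the explicit Green's function
\begin{equation}
G(\theta, \phi) \;=\; -\,\frac{\sin(L + \min(\theta, \phi))\, \sin(L - \max(\theta, \phi))}{\sin(2L)},
\end{equation}
setting $h(\theta) := \int_{-L}^L G(\theta, \phi)\, d\mu(\phi)$. A direct estimate using $\sin x \leq \min(x, 1)$ yields $|G(\theta, \phi)| \leq \delta_{\partial\Omega}(\phi)/\sin(2L)$, so the hypothesis makes the integral converge absolutely and, by dominated convergence, $h(\theta) \to 0$ as $\theta \to \pm L$. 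Since $G \leq 0$ we have $h \leq 0$. Define $K := \{x \in \mathbb{R}^2 : \langle x, v\rangle \leq h(v)\ \text{for all}\ v \in \Omega\}$; a standard polarity argument shows $K$ is a nonempty $C$-pseudo set with $\rec K = C$, and the convexity condition $h + h'' \geq 0$ ensures that the $1$-homogeneous extension of $h$ is convex, so $h_K = h$ on $\Omega$. Theorem \ref{thm 5.20.1} then gives $C$-asymptoticness, and $S_K = h + h'' = \mu$ by construction.

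Uniqueness is immediate in this setting: the difference $h_1 - h_2$ of the support functions of two admissible solutions satisfies the homogeneous problem $(h_1 - h_2) + (h_1 - h_2)'' = 0$ with $(h_1 - h_2)(\pm L) = 0$, and this has only the zero solution since $2L < \pi$; alternatively, appeal to Theorem \ref{thm uniqueness} of the paper. The anticipated main obstacle is the boundary analysis in the sufficiency direction, namely showing that the $h$ constructed from the Green's function attains the value $0$ continuously at $\pm L$: this is not automatic because the density $d\mu/d\theta$ may well be non-integrable, and it rests on the key estimate $|G(\theta, \phi)| \leq \delta_{\partial\Omega}(\phi)/\sin(2L)$, which is precisely what lets the integrability hypothesis \eqref{eq 11.19.1} feed dominated convergence. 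The real merit of the $n = 2$ case is that surface area depends linearly on support, reducing the otherwise nonlinear Monge-Amp\`ere problem \eqref{eq 11.24.1} to this tractable $1$D ODE.
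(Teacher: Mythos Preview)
Your proof is correct and takes a genuinely different route from the paper's. The paper argues \emph{geometrically} throughout: for necessity it projects the two halves $\partial^\pm K$ of the boundary orthogonally onto lines perpendicular to the two rays of $\partial C$ and observes that the images are contained in segments of length $\dist(x_0,\partial C)$, which converts directly into the bound $\int \sin(\beta_0-|\alpha|)\,dS_K<\infty$; for sufficiency it truncates $\mu$ to finite measures $\mu_i$, solves each via the Monge--Amp\`ere existence result (Theorem~\ref{thm 10.9.3}), proves a uniform bound on $\dist(o,K_i)$ by another projection argument (Lemma~\ref{lemma 10.13.1}), extracts a convergent subsequence, and finally translates to force $h_K=0$ on $\partial\Omega$; uniqueness is imported from the comparison principle for Monge--Amp\`ere. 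You instead exploit the accident that in $n=2$ the map $h\mapsto S_K$ is \emph{linear}, reducing everything to the scalar ODE $h''+h=\mu$ on $(-L,L)$ with Dirichlet data: necessity by pairing with the tent function and one integration by parts, sufficiency by the explicit Green kernel with the clean domination $|G(\theta,\phi)|\le (L-|\phi|)/\sin(2L)$, and uniqueness from the fact that $2L<\pi$ forbids nontrivial Dirichlet eigenfunctions. Your argument is more elementary and self-contained (no appeal to Monge--Amp\`ere theory, no approximation or compactness), while the paper's approach, though heavier, is deliberately structured so that its pieces (truncation, uniform distance estimate, subsequential limit) survive as a template for the higher-dimensional attempts in Section~\ref{section higher dim}.

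Two places where you elide details that deserve a sentence each: (a) after building $K$ from $h$ you assert $h_K=h$ on $\Omega$ and then invoke Theorem~\ref{thm 5.20.1}, but the latter needs $h_K=0$ on $\partial\Omega$, not merely $\lim_{\theta\to\pm L}h(\theta)=0$; the missing step is the convexity argument from the proof of Theorem~\ref{thm 11.26.1} (take $v_\lambda=(1-\lambda)v_0+\lambda v_1$ with $v_0\in\partial\Omega$, $v_1\in\Omega$, and let $\lambda\to0$). (b) The identity ``$S_K=h+h''$ as measures on $\Omega$'' for $C$-pseudo sets, which you list as a classical planar fact, is not in the paper as such; it follows from Lemma~\ref{lemma 10.9.2} specialized to $n=2$, where the Monge--Amp\`ere measure of a one-variable convex function is just its second distributional derivative. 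Neither of these is a gap, but both are worth flagging.
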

Recall that $\omega_\alpha$ is given in \eqref{eq 11.24.3}.
Note that the integrability condition \eqref{eq 11.19.1} trivially implies that $\mu$ is locally finite on $\Omega$ and it is, in fact, a growth condition on $\mu$ when $\alpha\rightarrow 0$. 

In higher dimensions, the situation seems to be much more complicated. A natural conjecture following Theorem \ref{thm 11.24.1} (due to Schneider) and Theorem \ref{thm main} is that for $C$-asymptotic $K$ in $\rn$, whether 
\begin{equation}\label{eq 11.24.4}
	\int_{0}^\frac{\pi}{2} \mu (\omega_\alpha)^{\frac{1}{n-1}}d\alpha<\infty.
\end{equation}
However, as we will see from Section \ref{section higher dim}, even when $C$ is a very nice cone in dimension 3, condition \eqref{eq 11.24.4} alone is far from sufficient. In fact, we shall see from Section \ref{section higher dim}, even when changing the power $\frac{1}{n-1}$ in \eqref{eq 11.24.4}, it is impossible to make it necessary and sufficient. We point out that Section \ref{section higher dim} contains other partial results that may be of separate interest. See, for example, Lemma \ref{lemma 11.19.2} and Theorem \ref{thm 11.24.5}. 

A bi-product of Section \ref{section relation with PDE} is that although we did not resolve the Minkowski problem \ref{problem 1} completely in all dimensions, it is possible to use a comparison principle (from PDE) to define the \emph{Blaschke sum} between two $C$-asymptotic sets in \emph{all} dimensions.

We remark at this point that in the last three decades, there have been many Minkowski problems for \emph{compact} convex bodies attracting significant interest from communities of convex geometric analysis as well as PDEs, due to discoveries of variational formulas of invariants from convex geometry and integral geometry. See, for example, Huang-Lutwak-Yang-Zhang \cite{MR3573332} and Lutwak-Xi-Yang-Zhang \cite{XLYZ}. Among them, we can count the \emph{general Christoffel-Minkowski problem} (Guan-Guan \cite{MR1933079}, Guan-Li-Li \cite{MR2954620}, Guan-Ma \cite{MR1961338}, Guan-Ma-Zhou \cite{MR2237290}), \emph{the $L_p$ Minkowski problem} (B\"or\"oczky-Henk \cite{MR3415694}, B\"or\"oczky-Lutwak-Yang-Zhang \cite{MR3037788}, Chou-Wang \cite{MR2254308}, Henk-Linke \cite{MR3148545}, Lutwak-Oliker \cite{MR1316557}, Lutwak-Yang-Zhang  \cite{MR2067123}, Stancu \cite{MR1901250,MR2019226}, Zhu \cite{MR3356071}), the ($L_p$) \emph{dual Minkowski problem} (B\"or\"oczky-Henk-Pollehn \cite{MR3825606}, B\"or\"oczky-Lutwak-Yang-Zhang-Zhao \cite{MR4156606}, Chen-Huang-Zhao \cite{MR3953117}, Chen-Li \cite{MR3818073}, Gardner-Hug-Weil-Xing-Ye \cite{MR3882970}, Henk-Pollehn \cite{MR3725875}, Huang-Lutwak-Yang-Zhang \cite{MR3573332}, Li-Sheng-Wang \cite{MR4055992}, Liu-Lu \cite{MR4127893}, Mui \cite{MR4455361}, Semenov \cite{MR4790421, vadim1,vadim2}, Zhao \cite{MR3880233}), and recently, the ($L_p$) \emph{chord Minkowski problem} (Lutwak-Xi-Yang-Zhang \cite{XLYZ}, Guo-Xi-Zhao \cite{guo2023lp}). Each of them generates a fully nonlinear Monge-Amp\`ere equation on $\sn$ with strong geometric flavor. Readers should be aware of the vast amount of literature on those problems not mentioned here, which can be found by looking at those citing the aforementioned papers. At the center of these problems, similar to Problem \ref{problem 1}, is the analysis/estimates of the underlying geometric invariants, or, in other words, geometric inequalities. It is of great interest to see whether the results and problems in the current work can be extended to those settings. In particular, we mention Huang-Liu \cite{MR4284099} where they generalized the results of Chou-Wang \cite{MR1391950} to the $L_p$ setting, Li-Ye-Zhu \cite{LiYeZhu2023} where results of Schneider \cite{MR3810252, MR4264230} are shown in the dual Brunn-Minkowski theory, and Yang-Ye-Zhu \cite{MR4565712} where Schneider \cite{MR3810252, MR4264230} are generalized in the $L_p$ setting. The recent \cite{MR4754947} by Schneider studied Minkowski-type problems when the Euclidean space is equipped with a different measure (that is, not the standard Lebesgue measure). While Chou-Wang \cite{MR1391950}, Huang-Liu \cite{MR4284099} and Schneider \cite{schneiderpseudocones} deal with possibly infinite measures that possess smooth densities, other papers mentioned dealt strictly with \emph{finite} measures.

The rest of the paper is organized in the following way. In Section \ref{section pseudo cones}, we will gather facts concerning pseudo cones and their special subclasses. In Section \ref{section relation with PDE}, we will point out the equivalence between Problem \ref{problem 1} and \eqref{eq 11.24.1}. Results from classical PDE theory will be translated, which will allow us to provide a shorter proof to Schneider \cite[Theorems 2,3]{MR3810252} and Schneider \cite[Theorem 1]{MR4264230}, as well as establish the uniqueness part of Problem \ref{problem 1} (Theorem \ref{thm uniqueness}). Section \ref{section dim 2} is dedicated to the proof of Theorem \ref{thm main}, whereas Section \ref{section higher dim} contains partial results in higher dimensions.

\textbf{Acknowledgement.} The authors would like to thank Professor Rolf Schneider for many helpful comments, particularly for pointing out that (1) implies (2) in Theorem \ref{thm 5.20.1} and for supplying the proofs of Lemmas \ref{lemma 5.20.1} and \ref{lemma 5.20.2}. The authors would also like to express their gratitude for the anonymous referees for their helpful comments.

\section{Pseudo cones and $C$-asymptotic sets}
\label{section pseudo cones}
Recall that $C$ is an arbitrarily fixed pointed, closed, convex cone with a nonempty interior.

Since $C$ is closed and pointed, there exists $u_*\in \text{int}\, C\cap \sn$  and $\xi_{C, u_*}>0$ such that 
\begin{equation}
	\langle u,u_*\rangle >\xi_{C, u_*}>0, \text{ for all } u\in C\cap \sn,
\end{equation}
and
\begin{equation}
	\langle v,u_*\rangle<-\xi_{C, u_*}<0, \text{ for all } v\in C^\circ\cap \sn=\overline{\Omega}.
\end{equation} 
With a proper choice of orthonormal basis in $\rn$, one can assume that $u_*=-e_1$ for an orthornormal basis $e_1,\dots, e_n$ in $\mathbb{R}^n$. 

In particular, in $\mathbb{R}^2$, we may assume that there exists $\beta_0\in (0,\pi/2)$ such that 
\begin{equation}
\label{eq 10.3.7}
\begin{aligned}
	C^\circ &=\{(r\cos\theta, r\sin \theta): r\geq 0, \theta\in [-\beta_0, \beta_0]\}\\
	C &=\{(r\cos\theta, r\sin \theta): r\geq 0, \theta\in [\beta_0+\frac{\pi}{2}, \frac{3\pi}{2}-\beta_0]\}.
\end{aligned}
\end{equation}
Note that in this case $\Omega=\{(\cos \theta, \sin\theta): \theta\in (-\beta_0,\beta_0)\}$. 

We point out that if $K\subset C$ is a pseudo cone and $\rec K\supset C$, then it must be the case that $\rec K=C$ and that $K$ is in fact $C$-pseudo.

The support function of a $C$-pseudo set $K$, denoted by $h_K: C^{\circ}\rightarrow (-\infty, 0]$, is given by
\begin{equation}\label{eq 10.13.4}
	h_K(y)=\sup_{x\in K} \langle x, y\rangle.
\end{equation}
It is simple to see that $h_K$ is homogeneous of degree $1$ and we therefore identify $h_K$ with its restriction on $\overline{\Omega}$. It is also simple to see that $h_C\equiv 0$. On the other hand, if $o\notin K$ (or equivalently $K\neq C$), we have $h_K<0$ on $\Omega$. 
 Recall that in the definition of the support function of a convex body (\emph{i.e.}, compact convex sets), the supremum on the right side of \eqref{eq 10.13.4} is, in fact, a maximum. This is not true for $C$-pseudo sets. However, if $v\in \Omega$ (or equivalently $y\in \text{int }C^\circ$), the supremum on the right side of \eqref{eq 10.13.4} is attained. Indeed, if $x_i\in K$ is such that $\langle x_i, v\rangle \rightarrow h_K(v)$, then 
 \begin{equation}
 	|x_i|=\frac{|\langle x_i, v\rangle|}{|\langle \frac{x_i}{|x_i|},v\rangle|}\leq \frac{|\langle x_i, v\rangle|}{\min_{u\in C\cap \sn}|\langle u,v\rangle|}
 \end{equation}
  is uniformly bounded owing to the fact that $v\in \Omega$. Hence, by running a standard compactness argument, we conclude that the supremum on the right side of  \eqref{eq 10.13.4} is attained. Note that the support function of a $C$-pseudo set $K$ is in general not continuous on $\overline{\Omega}$. However, being the supremum of a family of linear functions, it is lower semi-continuous. On the other side, if we know $h_K=0$ on $\partial \Omega$, by lower semi-continuity and the fact that $K\subset C$, we may conclude that $h_K$ is continuous on $\overline{\Omega}$. As we will see in Theorem \ref{thm 5.20.1}, this is indeed the case for $C$-asymptotic sets.

 We will write $\nu_K$ for the Gauss map of $K$ defined almost everywhere on $\partial K$. 

  The \emph{surface area measure} of a $C$-pseudo set $K$, denoted by $S_K$, is the Borel measure on $\Omega$ given by 
 \begin{equation}
 	S_K(\eta)=\mathcal{H}^{n-1}(\tau_K(\eta)).
 \end{equation}
 
 For simplicity, we will write $H(v,t)$, $H^{-}(v,t)$, and $H^+(v,t)$ for hyperplanes and half spaces; that is
 \begin{equation}
 	\begin{aligned}
 		H(v,t)&=\{x\in \rn: \langle x, v\rangle =t\},\\
 		H^{-}(v,t)&= \{x\in \rn: \langle x, v\rangle \leq t\},\\
 		H^{+}(v,t)&= \{x\in \rn: \langle x, v\rangle \geq t\}
 	\end{aligned}
 \end{equation}
 When $v=u_*$, we will omit the mentioning of $u_*$ and simply write $H(t)$, $H^{-}(t)$, and $H^+(t)$. Similarly, we will write $K(t)=K\cap H(t)$, $K^{-}(t)=K\cap H^{-}(t)$, and $K^+(t)=K\cap H^+(t)$ for a $C$-pseudo set $K$. In the same spirit, we will write $\partial K(t)=\partial K\cap H(t)$. Note that since $K$ is $C$-pseudo, $\partial K(t)$ coincides with the relative boundary of $K(t)$ in $H(t)$ as long as $K(t)$ contains interior points relative to $H(t)$.
  
The following lemma was established in Schneider \cite[Lemma 7]{schneiderpseudocones}. 
 \begin{lemma}\label{lemma 10.12.1}
 	Let $K$ be a $C$-pseudo set with $K\neq C$, $c_1,c_2>0$ be two constants, and $v\in \Omega$. There exists $t_0>0$ dependent only on $c_1$ and $c_2$ such that if $\dist(o,K)<c_1$ and $\delta_{\partial \Omega}(v)\geq c_2$, then
 	\begin{equation}
 		\tau_K(v)\subset K^-(t_0)\subset C^-(t_0).
 	\end{equation}
\end{lemma}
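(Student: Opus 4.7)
The plan is to convert the two hypotheses into quantitative estimates: the bound $\dist(o,K)<c_1$ will yield a lower bound on $h_K(v)$, while $\delta_{\partial\Omega}(v)\geq c_2$ will yield a uniform strictly positive lower bound on $-\langle u,v\rangle$ as $u$ ranges over $C\cap\sn$. Combining these forces a uniform bound on $|x|$ for every $x\in\tau_K(v)$, and the norm bound in turn forces $\tau_K(v)$ into a half-space $H^-(t_0)$.

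For the first estimate, since $K$ is closed and $\dist(o,K)<c_1$, I would pick some $x_0\in K$ with $|x_0|<c_1$. As $v\in\sn$, the definition of the support function immediately yields $h_K(v)\geq \langle x_0, v\rangle \geq -|x_0|>-c_1$.

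For the second estimate, I would argue by compactness. The set $C\cap\sn$ is compact because $C$ is closed and pointed, and the spherical closure $\overline{\omega_{c_2}}$ is a compact subset of $\Omega=\text{int}\,C^\circ\cap\sn$ by the defining inequality of $\omega_{c_2}$. Next, for any $w$ in the interior of $C^\circ$, picking $r>0$ with $B_r(w)\subset C^\circ$ and applying the defining inequality of $C^\circ$ to $w+(r/2)u/|u|$ gives $\langle u,w\rangle\leq -(r/2)|u|<0$ for every $u\in C\setminus\{0\}$. Hence $(u,v)\mapsto -\langle u,v\rangle$ is a strictly positive continuous function on the compact product $(C\cap\sn)\times\overline{\omega_{c_2}}$, so it attains a minimum $\kappa=\kappa(c_2)>0$.

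To finish, for any $x\in\tau_K(v)$ I would write $x=|x|u$ with $u\in C\cap\sn$ (using $\tau_K(v)\subset\partial K\subset K\subset C$). The two estimates combine to give
\[
-c_1<h_K(v)=\langle x, v\rangle = |x|\,\langle u,v\rangle \leq -\kappa|x|,
\]
so $|x|<c_1/\kappa$. Setting $t_0:=c_1/\kappa$, which depends only on $c_1$ and $c_2$ since $C$ is fixed, we have $\langle x, u_*\rangle\leq|x|<t_0$, i.e., $x\in H^-(t_0)$. Hence $\tau_K(v)\subset K\cap H^-(t_0)=K^-(t_0)$, and $K^-(t_0)\subset C^-(t_0)$ follows from $K\subset C$. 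The only non-routine step is the compactness/strict-positivity argument producing $\kappa$; this is exactly where the separation $\delta_{\partial\Omega}(v)\geq c_2$ is crucial, as otherwise $\kappa$ could degenerate to $0$ along sequences $v\to\partial\Omega$.
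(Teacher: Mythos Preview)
Your argument is correct. The two estimates you isolate are exactly the right ones: the bound $h_K(v)>-c_1$ from the distance hypothesis, and the uniform gap $\langle u,v\rangle\leq -\kappa<0$ on $(C\cap\sn)\times\{v:\delta_{\partial\Omega}(v)\geq c_2\}$ via compactness. The combination $-c_1<h_K(v)=|x|\langle u,v\rangle\leq -\kappa|x|$ then bounds $|x|$ and hence $\langle x,u_*\rangle$. One small remark: you invoke $K\neq C$ only implicitly, to guarantee $o\notin K$ so that $x\neq o$ and the decomposition $x=|x|u$ with $u\in C\cap\sn$ is valid; you might make that explicit. Also, compactness of $C\cap\sn$ uses only that $C$ is closed, not that it is pointed.

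As for comparison: the paper does not supply its own proof of this lemma but simply cites it as \cite[Lemma 7]{schneiderpseudocones}. Your self-contained argument is essentially the natural one and is in the same spirit as Schneider's original proof, which likewise combines a support-function lower bound from the distance hypothesis with a uniform angular separation coming from $\delta_{\partial\Omega}(v)\geq c_2$.
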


An immediate consequence of Lemma \ref{lemma 10.12.1} is 
\begin{coro}\label{coro 11.20.1}
	Let $K$ be a $C$-pseudo set. Its surface area measure, $S_K$, is locally finite on $\Omega$.
\end{coro}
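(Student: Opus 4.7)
The plan is to reduce the problem to showing $\mathcal{H}^{n-1}(\tau_K(\eta)) < \infty$ for an arbitrary compact $\eta \subset \Omega$, by sandwiching $\tau_K(\eta)$ inside the boundary of a bounded convex set and then invoking the standard fact that such boundaries have finite $(n-1)$-dimensional Hausdorff measure. Lemma \ref{lemma 10.12.1} does almost all the work; I would only need to feed it the correct pair of constants.

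First I would dispose of the trivial case $K=C$. Since $C$ is pointed and $v\in \Omega$ lies in $\Int C^\circ$, one checks that $\langle x,v\rangle<0$ for every $x\in C\setminus\{o\}$, so $h_C(v)=0$ is attained only at $o\in \partial C$. Hence $\tau_C(v)=\{o\}$ for every $v\in \Omega$, and $S_C$ is the zero measure on $\Omega$.

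In the non-trivial case $K\neq C$, I would fix an arbitrary compact $\eta\subset \Omega$ and extract two positive constants. Because $\eta$ is compact in the (relatively) open set $\Omega$, $c_2:=\min_{v\in \eta}\delta_{\partial\Omega}(v)>0$; and because $K$ is nonempty, $c_1:=\dist(o,K)+1<\infty$. Applying Lemma \ref{lemma 10.12.1} to these constants produces some $t_0>0$ with $\tau_K(v)\subset K^-(t_0)$ for every $v\in \eta$, so $\tau_K(\eta)\subset \partial K\cap K^-(t_0)$.

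The final step is to verify $\mathcal{H}^{n-1}(\partial K\cap K^-(t_0))<\infty$. Using the pointedness inequality $\langle u,u_*\rangle\geq \xi_{C,u_*}$ for $u\in C\cap \sn$, every $x\in C^-(t_0)$ satisfies $|x|\leq t_0/\xi_{C,u_*}$, so $K^-(t_0)\subset C^-(t_0)$ is compact and convex. Since $\partial K\cap K^-(t_0)\subset \partial(K^-(t_0))$, and the boundary of any bounded convex body in $\rn$ has finite $(n-1)$-dimensional Hausdorff measure, the bound $S_K(\eta)=\mathcal{H}^{n-1}(\tau_K(\eta))<\infty$ follows at once. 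Local finiteness then follows because every point of $\Omega$ has a compact neighborhood inside $\Omega$.

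There is essentially no serious obstacle here, as the author already advertises with the label ``immediate consequence''. The only items requiring minor care are the separate treatment of $K=C$ and the inclusion $\partial K\cap K^-(t_0)\subset \partial(K^-(t_0))$; the latter holds because any boundary point of $K$ lying in the slab $K^-(t_0)$ is either interior to $H^-(t_0)$, and therefore automatically on the boundary of $K\cap H^-(t_0)$, or lies on the cutting hyperplane $H(t_0)$ itself.
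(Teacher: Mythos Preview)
Your argument is correct and is precisely the intended ``immediate consequence'' of Lemma \ref{lemma 10.12.1}: the paper does not spell out a proof, and your fill-in (handle $K=C$ separately, then for compact $\eta\subset\Omega$ apply the lemma with $c_1=\dist(o,K)+1$ and $c_2=\min_{v\in\eta}\delta_{\partial\Omega}(v)$ to trap $\tau_K(\eta)$ inside the bounded convex set $K^-(t_0)$) is exactly what one would write.
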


Recall that the convergence in the set of convex bodies is defined in terms of the Hausdorff metric $d_H(K,L)$ between two convex bodies $K$ and $L$, or, equivalently, the sup norm of the difference of their support functions.

When dealing with unbounded sets, extra caution has to be placed.

\begin{defi}
	We say a sequence of $C$-pseudo sets $K_i$ is convergent to a $C$-pseudo set $K$ if there exists $t_0>0$ such that $K_i^-(t_0)\neq \emptyset$ for all $i$ and $K_i^-(t)\rightarrow K^-(t)$ in terms of Hausdorff metric for each $t\geq t_0$. 
\end{defi}

We remark that with this definition, for each $t\geq t_0$, the convex body $K_i^-(t)$ might converge to $K_0^-(t)$ at different rates. 

We will make repeated use of the following selection theorem for $C$-pseudo sets with uniformly bounded distances to the origin. The following lemma is Lemma 1 in Schneider \cite{schneiderpseudocones}.
\begin{lemma}\label{lemma 10.13.2}
If $K_i$ is a sequence of $C$-pseudo sets and $\dist(o, K_i)$ is uniformly bounded, then there exists a subsequence $K_{i_j}$ such that $K_{i_j}\rightarrow K_0$ for some $C$-pseudo set $K_0$.
\end{lemma}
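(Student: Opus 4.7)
The plan is to run a Blaschke-style selection argument on the sequence of compact truncations $K_i^-(t_0+k)$ and paste the resulting local limits together into a single $C$-pseudo set. Since $\dist(o,K_i)\leq c_1$ uniformly, there exist $x_i\in K_i$ with $|x_i|\leq c_1$, hence $\langle x_i,u_*\rangle\leq c_1$. I fix any $t_0>c_1$; then $K_i^-(t_0)\ni x_i$ is nonempty for every $i$, and because $K_i\subset C$ and $\langle x,u_*\rangle\geq \xi_{C,u_*}|x|$ on $C$, each $K_i^-(t)$ lies in the compact set $C^-(t)$ for every $t\geq t_0$. The strict gap $t_0>c_1$ will play a crucial role later.

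Next, I apply Blaschke's selection to $\{K_i^-(t_0+k)\}_i$ for each integer $k\geq 0$ and diagonalize to obtain a single subsequence $K_{i_j}$ with $K_{i_j}^-(t_0+k)\to L_k$ in the Hausdorff metric, each $L_k$ being a nonempty compact convex subset of $C^-(t_0+k)$. Monotonicity passes to the limit, so $L_k\subset L_{k+1}$, and a key compatibility relation $L_k=L_{k+1}\cap H^-(t_0+k)$ must be established. I then define $K_0=\bigcup_{k\geq 0}L_k$. Any bounded sequence in $K_0$ has bounded $u_*$-component and so is eventually trapped in a single $L_k$; closedness of each $L_k$ then yields closedness of $K_0$. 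Nonemptiness, convexity, and the inclusion $K_0\subset C$ are immediate from the corresponding properties of the $L_k$.

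To show $\rec K_0=C$, the inclusion $\rec K_0\subset C$ follows from $K_0\subset C$ and the fact that $C$ is a closed cone. For the reverse inclusion, given $x\in L_k$ and $y\in C$, I approximate $x$ by $x^{(j)}\in K_{i_j}^-(t_0+k)$; since $\rec K_{i_j}=C$, the points $x^{(j)}+\lambda y$ lie in $K_{i_j}$, and $\langle x^{(j)}+\lambda y,u_*\rangle\leq t_0+k+\lambda\langle y,u_*\rangle$ puts them inside $K_{i_j}^-(t_0+m)$ once $m$ is large enough. Passing to the limit gives $x+\lambda y\in L_m\subset K_0$. In particular $K_0$ is $C$-pseudo; the pseudo-cone property $\mu x\in K_0$ for $\mu\geq 1$ is the special case $y=x\in K_0\subset C$ of this recession relation.

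Finally, for arbitrary $t\geq t_0$, I upgrade the convergence from integer shifts: choose $k$ with $t_0+k\leq t\leq t_0+k+1$; any Hausdorff sub-limit $M$ of $K_{i_j}^-(t)$ exists inside $C^-(t_0+k+1)$ by Blaschke, and must equal $L_{k+1}\cap H^-(t)=K_0^-(t)$, after which uniqueness of the sub-limit forces the whole subsequence to converge. The principal obstacle is precisely the half-space stability appearing in both the compatibility step and this last step: Hausdorff convergence need not be preserved under intersection with a closed half-space when the limit touches the bounding hyperplane too flatly. I expect to resolve this by exploiting the $C$-pseudo structure, namely that $\rec K_{i_j}=C$ forces $x_{i_j}+C\subset K_{i_j}$; combined with the strict gap $t_0>c_1$, this provides uniform ``interior thickness'' of $L_{k+1}$ strictly inside $H^-(t_0+k)$ (respectively $H^-(t)$), which allows points of $L_{k+1}\cap H^-(t)$ to be pushed strictly below the hyperplane via convex combinations and then approximated by points in $K_{i_j}^-(t)$, yielding the missing inclusion.
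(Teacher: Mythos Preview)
The paper does not actually prove this lemma; it simply records it as Lemma~1 of Schneider's paper on pseudo-cones (reference \texttt{schneiderpseudocones}) and uses it as a black box. So there is no in-paper argument to compare your proposal against.

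Your outline is the standard Blaschke-selection-plus-gluing argument and is essentially correct. The key technical point you flagged---that intersecting with a closed half-space does not in general commute with Hausdorff limits---is exactly the step that requires care, and your proposed fix is the right one: the uniformly bounded points $x_{i_j}\in K_{i_j}$ with $\langle x_{i_j},u_*\rangle\leq c_1<t_0$ give a limit point $x_0\in L_{k+1}$ lying strictly inside $H^-(t_0+k)$; for any $z\in L_{k+1}\cap H^-(t_0+k)$ the convex combination $(1-\varepsilon)z+\varepsilon x_0$ lies strictly below the hyperplane, so it can be approximated by points of $K_{i_j}^-(t_0+k)$, giving $(1-\varepsilon)z+\varepsilon x_0\in L_k$ and hence $z\in L_k$ by closedness. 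The same device handles the final step for arbitrary $t\geq t_0$. Once the compatibility $L_k=L_{k+1}\cap H^-(t_0+k)$ is in hand, your verification that $K_0=\bigcup_k L_k$ is closed, convex, and has recession cone exactly $C$ is routine; note that $K_0^-(t_0+k)=L_k$ follows by iterating compatibility, which is what makes the closedness argument go through cleanly.
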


\mycomment{\color{red} Add lemma about $C(t)+K(t')\subset K(t+t') $ etc}
 
The following lemma reveals the fact that the slices $K(t)$ of a $C$-pseudo set $K$ satisfy certain monotonicity property.
\begin{lemma}
\label{lemma 10.4.1}
	Let $K$ be a $C$-pseudo set and $v\in \partial \Omega$. If $t_1\leq t_2$, then we have
	\begin{equation}
		h_{K(t_1)}(v)\leq h_{K(t_2)}(v).
	\end{equation}
\end{lemma}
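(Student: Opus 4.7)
The idea is that since $v$ lies on $\partial\Omega = \partial C^\circ \cap \sn$, there is a nonzero direction inside the recession cone $C$ which is orthogonal to $v$. Translating a point of $K$ in this direction keeps us in $K$ (because $C = \rec K$), leaves $\langle \cdot, v\rangle$ unchanged, and only increases $\langle \cdot, u_*\rangle$. Hence any point of $K(t_1)$ can be pushed to a point of $K(t_2)$ with the same $v$-projection, and the support function inequality follows immediately from taking suprema.

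The plan splits into three short steps. First, I would produce the orthogonal direction. Applying the supporting hyperplane theorem to $C^\circ$ at $v\in \partial C^\circ$ and using that $C^\circ$ is a cone through the origin yields a nonzero vector $w$ satisfying $\langle w,y\rangle\le 0$ for all $y\in C^\circ$ and $\langle w,v\rangle = 0$. The first property says exactly $w\in (C^\circ)^\circ = C$, so in particular the inequality displayed at the beginning of Section \ref{section pseudo cones} gives $\langle w,u_*\rangle \ge |w|\,\xi_{C,u_*} > 0$.

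Second, if $K(t_1)=\emptyset$ the conclusion is trivial, so I fix $x\in K(t_1)$. Because $w\in C=\rec K$, the ray $\{x+\lambda w:\lambda\ge 0\}$ is contained in $K$: the paper's definition of $\rec K$ directly gives $x+\lambda w\in K$ for $\lambda\ge 1$, while the segment $[x,x+w]\subset K$, obtained from convexity since $x,x+w\in K$, covers the range $\lambda\in[0,1]$.

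Third, set $\lambda_0 := (t_2-t_1)/\langle w,u_*\rangle\ge 0$, and let $y := x+\lambda_0 w \in K$. Then $\langle y,u_*\rangle = t_1 + \lambda_0\langle w,u_*\rangle = t_2$, so $y\in K(t_2)$, whereas $\langle y,v\rangle = \langle x,v\rangle + \lambda_0\langle w,v\rangle = \langle x,v\rangle$. Taking the supremum over $x\in K(t_1)$ yields $h_{K(t_1)}(v)\le h_{K(t_2)}(v)$. I do not foresee any real obstacle; the only nontrivial ingredient is the existence of $w$, which is just the duality $(C^\circ)^\circ = C$ applied at a boundary point of $C^\circ$.
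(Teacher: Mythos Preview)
Your proof is correct and follows essentially the same approach as the paper's: both produce a nonzero direction $w\in C$ with $\langle w,v\rangle=0$ (the paper phrases this as picking $u_0\in\partial C$ orthogonal to $v$, which is exactly what your supporting-hyperplane argument yields), then translate a point of $K(t_1)$ along $w$ into $K(t_2)$ without changing its $v$-projection. The only cosmetic differences are that the paper works directly with a maximizer $x_0\in K(t_1)$ and uses $x_0+C\subset K$, whereas you take an arbitrary $x$, spell out the recession-cone reasoning, compute $\lambda_0$ explicitly, and then pass to the supremum.
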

\begin{proof}
	By definition of support function, there exists $x_0\in K(t_1)$ such that $\langle x_0, v\rangle =h_{K(t_1)}(v)$. Since $K$ is $C$-pseudo, we have $x_0+C\subset K$. Since $v\in \partial \Omega$, there exists $o\neq u_0\in \partial C$ such that $\langle u_0, v\rangle =0$. Since $x_0+C\subset K$, there exists $\lambda>0$ such that $x_0+\lambda u_0\in K(t_2)$. This implies that $h_{K(t_2)}(v)\geq \langle x_0+\lambda u_0, v\rangle =\langle x_0, v\rangle = h_{K(t_1)}(v)$.
\end{proof}

By Lemma \ref{lemma 10.4.1} and the definition of the support function, for each $v\in \partial\Omega$, we have
\begin{equation}
		h_{K}(v) = \sup_{t} h_{K(t)}(v)=\lim_{t\rightarrow \infty}h_{K(t)}(v).
	\end{equation}
	
We will show that the definition that $K$ being $C$-asymptotic is equivalent to prescribing the values of the support function of $K$ on $\partial \Omega$.

\begin{theorem}\label{thm 5.20.1}
	Let $K$ be a $C$-pseudo set. The following two statements are equivalent:
	\begin{enumerate}
		\item $K$ is $C$-asymptotic;
		\item $h_K=0$ on $\partial \Omega$.
	\end{enumerate}
\end{theorem}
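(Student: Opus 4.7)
Both directions will be argued by contradiction; (2)$\Rightarrow$(1) is routine, while (1)$\Rightarrow$(2) requires an explicit construction that I describe below.

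For (2)$\Rightarrow$(1), assume $h_K\equiv 0$ on $\partial\Omega$, and suppose $K$ fails to be $C$-asymptotic, so a sequence $x_k\in\partial K$ satisfies $|x_k|\to\infty$ and $\dist(x_k,\partial C)\ge\delta>0$. For each $k$, let $v_k\in \sn$ be an outer unit normal of $K$ at $x_k$; since $\rec K=C$, any such $v_k$ lies in $\overline\Omega=C^\circ\cap\sn$, and $h_K(v_k)=\langle x_k,v_k\rangle$. Two cases need to be ruled out. If $\delta_{\partial\Omega}(v_k)\ge\alpha>0$ along a subsequence, then Lemma~\ref{lemma 10.12.1} traps every such $x_k$ in a fixed slice $K^-(t_0)$, which is incompatible with $\langle x_k,u_*\rangle\ge\xi_{C,u_*}|x_k|\to\infty$. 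Otherwise $v_k\to v\in\partial\Omega$ along a subsequence; lower semi-continuity of $h_K$, together with the upper bound $h_K\le 0$ on $\overline\Omega$ and $h_K(v)=0$, forces $h_K(v_k)\to 0$. However $B(x_k,\delta)\subset C$ combined with $v_k\in C^\circ$ yields $\langle x_k+\delta v_k,v_k\rangle\le 0$, so $h_K(v_k)=\langle x_k,v_k\rangle\le -\delta$, a contradiction.

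For (1)$\Rightarrow$(2), the case $K=C$ is trivial, so assume $K\ne C$ is $C$-asymptotic and, for contradiction, $h_K(v)=-c<0$ for some $v\in\partial\Omega$. Since $v\in\partial C^\circ$, there is a unit vector $u\in C$ with $\langle u,v\rangle=0$. The idea is to manufacture boundary points of $K$ that march out along $u$ while staying a definite distance inside $\Int C$. Fix any $q_0\in\Int K\subset\Int C$, set $a:=-\langle u_*,v\rangle>0$, and choose $\mu:=c/(2a)$. For $\lambda\ge 0$ define
\[
x_\lambda := \lambda u + \mu u_*,\qquad q_\lambda := q_0+\lambda u.
\]
Then $\langle x_\lambda,v\rangle=-c/2>-c$ places $x_\lambda$ in $\Int C\setminus K$, while $\lambda u\in\rec K=C$ places $q_\lambda$ in $\Int K$. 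Let $y_\lambda\in\partial K$ be the first intersection of the segment $[x_\lambda,q_\lambda]$ with $K$. Then
\[
y_\lambda=\lambda u+z_\lambda,\qquad z_\lambda=(1-s_\lambda)\mu u_*+s_\lambda q_0,\qquad s_\lambda\in(0,1).
\]
The membership $y_\lambda\in K$ yields $\langle y_\lambda,v\rangle\le -c$, which translates to a lower bound $s_\lambda\ge s_0>0$ independent of $\lambda$. Concavity of $\dist(\,\cdot\,,\partial C)$ on $C$ then gives $\dist(z_\lambda,\partial C)\ge s_0\dist(q_0,\partial C)>0$, and since $\lambda u\in C$ the inclusion $B(y_\lambda,s_0\dist(q_0,\partial C))=\lambda u+B(z_\lambda,s_0\dist(q_0,\partial C))\subset\lambda u+C\subset C$ propagates this to $\dist(y_\lambda,\partial C)\ge s_0\dist(q_0,\partial C)$. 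As $\lambda\to\infty$ we have $|y_\lambda|\to\infty$, contradicting (1).

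The main obstacle is the direction (1)$\Rightarrow$(2): the natural attempt, following the slice maximizer $y_t:=\arg\max_{K(t)}\langle\,\cdot\,,v\rangle$ (whose support value tends to $h_K(v)$ by Lemma~\ref{lemma 10.4.1}), does not yield a contradiction, because $y_t$ can drift toward $\partial C$ along a face different from $\{u\in C:\langle u,v\rangle=0\}$, so that $\dist(y_t,\partial C)\to 0$ is consistent with $h_K(v)<0$. The construction above circumvents this by anchoring each $y_\lambda$ to the correct face through the dominating summand $\lambda u$.
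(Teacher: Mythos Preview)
Your proof is correct, and in both directions it takes a genuinely different route from the paper's.

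For (2)$\Rightarrow$(1), the paper works with slices: by Lemma~\ref{lemma 10.4.1} the family $h_{K(t)}|_{\partial\Omega}$ increases to $h_K|_{\partial\Omega}\equiv 0$, Dini's theorem upgrades this to uniform convergence on $\partial\Omega$, and a small Minkowski enlargement of $K(t)$ in $u_*^\perp$ then swallows $C(t)$. Your argument bypasses slices and Dini entirely, replacing them with a dichotomy on the outer normals $v_k$; the only external input is Lemma~\ref{lemma 10.12.1} (for the case $v_k$ bounded away from $\partial\Omega$) together with lower semicontinuity of $h_K$ and the inclusion $B(x_k,\delta)\subset C$. One minor remark: to invoke Lemma~\ref{lemma 10.12.1} you implicitly use $K\neq C$, which is automatic here since $K=C$ would force $\dist(x_k,\partial C)=0$.

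For (1)$\Rightarrow$(2), the paper first proves two auxiliary lemmas (a Hausdorff-distance estimate, Lemma~\ref{lemma 5.20.1}, and its consequence $d_H(K(t),C(t))\to 0$, Lemma~\ref{lemma 5.20.2}), and then reads off $h_{K(t)}(v)\ge h_{C(t)}(v)-\varepsilon=-\varepsilon$. Your construction is more direct and more elementary: you explicitly produce boundary points $y_\lambda=\lambda u+z_\lambda$ that march to infinity along the boundary ray $\mathbb{R}_+ u\subset\partial C$ while the bounded part $z_\lambda$ stays a fixed convex combination of two interior points, and concavity of $\dist(\cdot,\partial C)$ on $C$ does the rest. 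The trade-off is that the paper's detour yields the intermediate statement $d_H(K(t),C(t))\to 0$, which has independent value; your argument reaches the conclusion without passing through it.
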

Theorem \ref{thm 5.20.1} is a combination of Propositions \ref{prop 10.9.1} and \ref{prop 10.12.1}.

\begin{prop}\label{prop 10.9.1}
	Let $K$ be a $C$-pseudo set. If $h_K\equiv 0$ on $\partial \Omega$, then $K$ is $C$-asymptotic.
\end{prop}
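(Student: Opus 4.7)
My plan is a proof by contradiction. Assume $K$ is not $C$-asymptotic (the case $K=C$ is trivial, so we may take $K \neq C$). Negating the definition of $C$-asymptotic and passing to a subsequence, I would obtain $\delta > 0$ and a sequence $x_i \in \partial K$ with $|x_i| \to \infty$ and $\dist(x_i, \partial C) \geq \delta$ for every $i$.

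The driving geometric observation is that $\dist(x_i, \partial C) \geq \delta$ forces $B(x_i, \delta) \subseteq C$; testing this inclusion against any unit $v \in \overline{\Omega} \subset C^{\circ}$ and using the definition of the dual cone yields the uniform estimate
\begin{equation}
	\langle x_i, v\rangle \leq -\delta, \qquad \text{for every } v\in \overline{\Omega} \text{ and every } i.
\end{equation}
Next I would produce at each $x_i$ a unit outer normal $v_i \in \sn$ to $K$ via the supporting hyperplane theorem, so that $h_K(v_i) = \langle x_i, v_i\rangle$. Because $K$ is $C$-pseudo one has $x_i + C \subseteq K$, forcing $\langle c, v_i\rangle \leq 0$ for every $c\in C$ and hence $v_i \in C^{\circ}\cap \sn = \overline{\Omega}$. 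Compactness of $\overline{\Omega}$ then lets me pass to a further subsequence with $v_i \to v_0 \in \overline{\Omega}$.

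The argument now splits according to where $v_0$ lies. If $v_0 \in \Omega$, then $\delta_{\partial \Omega}(v_i) \geq \delta_{\partial \Omega}(v_0)/2$ for all large $i$, so Lemma \ref{lemma 10.12.1}, applied with $c_1 = \dist(o, K) + 1$ (finite and strictly positive since $K \neq C$) and $c_2 = \delta_{\partial\Omega}(v_0)/2$, places $x_i \in \tau_K(v_i) \subseteq K^-(t_0)$ for all large $i$, contradicting $|x_i| \to \infty$. If $v_0 \in \partial \Omega$, the hypothesis gives $h_K(v_0) = 0$; combining the lower semi-continuity of $h_K$ with the trivial bound $h_K \leq 0$ on $\overline{\Omega}$ (from $K \subseteq C$) forces $h_K(v_i) \to 0$, so $\langle x_i, v_i\rangle \to 0$. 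This directly contradicts $\langle x_i, v_i\rangle \leq -\delta$ from the key estimate.

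The main obstacle is recognizing the right dichotomy: the hypothesis $h_K \equiv 0$ on $\partial \Omega$ is tailor-made to rule out outer-normal directions escaping to $\partial \Omega$ via lower semi-continuity, while outer-normal directions staying in $\Omega$ are ruled out by the a priori compactness statement of Lemma \ref{lemma 10.12.1}. Once the uniform bound $\langle x_i, v\rangle \leq -\delta$ on all of $\overline{\Omega}$ is in hand, both cases resolve essentially mechanically.
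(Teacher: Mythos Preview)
Your proof is correct, but it follows a genuinely different route from the paper's. The paper argues directly: using the monotonicity $h_{K(t_1)}(v)\leq h_{K(t_2)}(v)$ for $v\in\partial\Omega$ (Lemma \ref{lemma 10.4.1}) together with the hypothesis $h_K\equiv 0$ on $\partial\Omega$, Dini's theorem gives uniform convergence $h_{K(t)}\to 0$ on $\partial\Omega$. This is then translated into a geometric statement: for every $\varepsilon>0$ and $t$ large enough, adding a ball of radius comparable to $\varepsilon$ to $K(t)$ swallows $C(t)$, forcing $\dist(x,\partial C(t))<c\varepsilon$ for $x\in\partial K(t)$.

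Your argument instead extracts outer normals $v_i$ at the offending boundary points and runs a dichotomy on the limit $v_0$, handling $v_0\in\Omega$ via Lemma \ref{lemma 10.12.1} and $v_0\in\partial\Omega$ via lower semi-continuity of $h_K$. This is clean and arguably more conceptual, but it leans on Lemma \ref{lemma 10.12.1}, which is quoted from Schneider; the paper's proof is more self-contained, resting only on the elementary Lemma \ref{lemma 10.4.1} and Dini. On the other hand, your key uniform estimate $\langle x_i,v\rangle\leq -\delta$ for all $v\in\overline{\Omega}$ is a nice observation that yields the $\partial\Omega$ case instantly without needing any slice analysis.
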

\begin{proof}
Define $f:\partial \Omega\rightarrow \mathbb{R}$ by
	\begin{equation}
		f(v)=\max_{u\in \sn \cap u_*^\perp} |u\cdot v|.
	\end{equation}
	Note that $f>0$ since $\pm u_*\notin \partial \Omega$. The function $f$ is also lower semi-continuous since it is the maximum of a family of continuous functions.  The fact that $\partial \Omega$ is compact now implies the existence of $\zeta_{C, u_*}>0$ such that 
	\begin{equation}\label{eq 10.4.1}
		\max_{u\in \sn \cap u_*^\perp} |u\cdot v|\geq \zeta_{C, u_*}>0,
	\end{equation}
for all $v\in \partial \Omega$.

  		Note that both $h_K$ and $h_{K(t)}$ are continuous on $\partial\Omega$. (Recall that $h_K\equiv 0$ on $\partial \Omega$ by assumption.) Since $\partial \Omega$ is compact, by Lemma \ref{lemma 10.4.1} and Dini's theorem, $h_{K(t)}$ converges to $h_{K}\equiv 0$ on $\partial \Omega$ uniformly. Thus, for every $\varepsilon>0$, there exists $M>0$ such that for each $t>M$ and $v\in \partial \Omega$, we have 
	\begin{equation}
		h_{K(t)}(v)> -\varepsilon.
	\end{equation}
	 Hence, by \eqref{eq 10.4.1}, for each $v\in \partial \Omega$, we have
	\begin{equation*}
		h_{K(t)+\frac{\varepsilon}{\zeta_{C,u_*}} (B\cap u_*^\perp)}(v)>-\varepsilon+\varepsilon=0,
	\end{equation*}
	where we denote by $B$ the centered unit ball.
	This suggests that 
	$K(t)+\frac{\varepsilon}{\zeta_{C,u_*}} (B\cap u_*^\perp)\supset C(t)$ for each $t>M$. Hence, for each $x\in \partial K(t)$, we have $\dist(x, \partial C)\leq \dist(x,\partial C(t))<\frac{\varepsilon}{\zeta_{C,u_*}}$. Note that it is simple to find $M'>0$ such that when $x\in C$ satisfies $|x|>M'$, then $x\in H(t)$ for some $t>M$. By definition, this implies that $K$ is $C$-asymptotic.
\end{proof}

We require the following estimate regarding the Hausdorff distance between two convex bodies. We write $B(z,r)$ for the ball in $\rn$ centered at $z$ with radius $r$.

\begin{lemma}\label{lemma 5.20.1}
	Let $K, L$ be two convex bodies in $\rn$ with $K\subset L$. If $z\in \rn$ and $r>0$ is such that $B(z,r)\subset K$, then 
	\begin{equation}
		d_H(K,L)\leq \left(1+r^{-1} \max_{y\in \partial L }|y-z|\right)  \max_{x\in \partial K} \dist(x,\partial L).
	\end{equation}
\end{lemma}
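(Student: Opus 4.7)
The plan is to reduce the Hausdorff distance bound to a pointwise radial estimate, exploiting the inscribed ball $B(z,r)\subset K$. Since $K\subset L$, one has $d_H(K,L)=\sup_{y\in\partial L}\dist(y,K)$. Fix $y\in\partial L\setminus\{z\}$ and set $u=(y-z)/|y-z|$. Writing $\rho_K=\rho_K(u)$ and $\rho_L=\rho_L(u)=|y-z|$ for the radial functions of $K$ and $L$ about $z$, the exit point $y':=z+\rho_K u\in\partial K$ lies on the segment from $z$ to $y$, so $\dist(y,K)\leq|y-y'|=\rho_L-\rho_K$. Hence it suffices to prove $\rho_L(u)-\rho_K(u)\leq(1+R/r)\eps$ for every direction $u$, where $\eps:=\max_{x\in\partial K}\dist(x,\partial L)$.

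Since $y'\in\partial K$, by hypothesis there exists $p\in\partial L$ with $|p-y'|=\delta\leq\eps$. When $\delta>0$, the nearest-point characterization of projection onto the convex set $L$ identifies $m:=(p-y')/\delta$ as an outward unit normal of $L$ at $p$; when $\delta=0$ I instead take $m$ to be any outward normal of $L$ at $y'=p$. Applying the supporting inequality $\langle x,m\rangle\leq\langle p,m\rangle$ (valid for $x\in L$) to the two test points $y=z+\rho_L u\in L$ and $z+rm\in B(z,r)\subset L$, and substituting $p=z+\rho_K u+\delta m$, yields after a short expansion the two linear inequalities
\begin{equation*}
(\rho_L-\rho_K)\langle u,m\rangle \leq \delta,\qquad \rho_K\langle u,m\rangle\geq r-\delta.
\end{equation*}

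The decisive step is to combine these inequalities. If $\delta<r$, the second gives $\langle u,m\rangle\geq(r-\delta)/\rho_K>0$; inserting this into the first and then replacing $\rho_K$ by $\rho_L-(\rho_L-\rho_K)$ causes the $\delta$-terms on opposite sides to cancel, leaving the clean estimate
\begin{equation*}
r(\rho_L-\rho_K) \leq \delta\rho_L, \qquad \text{so}\qquad \rho_L-\rho_K \leq \frac{\delta\rho_L}{r}\leq\frac{\eps R}{r}\leq\left(1+\frac{R}{r}\right)\eps.
\end{equation*}
If instead $\delta\geq r$, then $\eps\geq r$ and the bound $\rho_L-\rho_K\leq\rho_L\leq R\leq (R/r)\eps$ is immediate.

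The main technical observation making the argument work, as opposed to the naive manipulation $\rho_L-\rho_K\leq\delta\rho_K/(r-\delta)$ that diverges as $\delta\uparrow r$, is the substitution $\rho_K=\rho_L-(\rho_L-\rho_K)$ which moves the gap $\rho_L-\rho_K$ from the denominator into a self-canceling term. The edge case $\delta=0$ is not a genuine obstacle: the same pair of inequalities apply with $m$ any outward normal of $L$ at $y'$, and the first one immediately forces $\rho_L=\rho_K$.
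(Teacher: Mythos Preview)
Your proof is correct and takes a genuinely different route from the paper's. The paper argues geometrically: it picks $y\in\partial L$ realizing $\varepsilon=d_H(K,L)$, forms the ``ice-cream cone'' $T=\text{conv}\{B(z,r),y\}\subset L$, and observes that a ball of radius $\eta=\varepsilon r/(|y-z|+r)$ centered on the segment $[z,y]$ fits inside $T\cap B(y,\varepsilon)$; since this segment must cross $\partial K$, some $x\in\partial K$ satisfies $\dist(x,\partial L)\geq\eta$, and rearranging gives the bound. You instead work with the radial functions about $z$ and a supporting hyperplane of $L$ at the nearest point $p\in\partial L$ to $y'=z+\rho_K u$, reducing everything to the two linear inequalities $(\rho_L-\rho_K)\langle u,m\rangle\leq\delta$ and $\rho_K\langle u,m\rangle\geq r-\delta$, which combine cleanly. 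Your approach is slightly sharper: in fact the chain $rD\leq(\rho_K\langle u,m\rangle+\delta)D\leq\rho_K\delta+\delta D=\delta\rho_L$ holds for all $\delta>0$ without the case split, so you actually obtain $d_H(K,L)\leq(R/r)\eps$, a hair better than the stated constant. One small quibble of phrasing: the reason $m=(p-y')/\delta$ is an outward normal of $L$ at $p$ is not literally the projection-onto-$L$ characterization (since $y'\in L$), but rather that $B(y',\delta)\subset L$ is internally tangent to $\partial L$ at $p$, forcing any supporting hyperplane of $L$ there to support the ball as well; the conclusion is the same. You also use $R$ without defining it, though its meaning $R=\max_{y\in\partial L}|y-z|$ is clear from context.
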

\begin{proof}
	Let $\varepsilon = d_H(K,L)$. If $\varepsilon=0$, there is nothing to prove. Hence, we will assume $\varepsilon>0$. 
	
	By the definition of Hausdorff metric and that $K\subset L$, there exists $y\in \partial L$ such that $\dist(y,K)=\varepsilon$. Consequently, $\Int K\cap B(y,\varepsilon)=\emptyset$.  Consider the ``ice-cream cone'' 
	\begin{equation}
		T=\text{conv}\, \{B(z,r), y\}\subset L. 
	\end{equation}
	Let $w$ be the intersection point of $B(y,\varepsilon)$ and the line segment connecting $y$ and $z$. It is simple to see that
	\begin{equation}
		B\left(w-\eta \frac{z-y}{|z-y|}, \eta\right)\subset T\cap B(y,\varepsilon), \text{ where } \eta = \frac{\varepsilon r}{|y-z|+r}.
	\end{equation}
	Consequently, for any point $x$ in the line segment connecting $w$ and $z$, we have
	\begin{equation}
		B(x, \eta)\subset \Int L.
	\end{equation}
	In particular, since $w\notin \Int K$ and $z\in \Int K$, we may take $x\in \partial K$. This implies that
	\begin{equation}
		\max_{x\in \partial K} \dist(x,\partial L)\geq \dist (x,\partial L)\geq \eta= d_H(K,L) \frac{r}{|y-z|+r}.
	\end{equation}
	This immediately implies the desired consequence.
\end{proof}

\begin{lemma}\label{lemma 5.20.2}
	Let $K$ be a $C$-pseudo set. If $K$ is $C$-asymptotic, then 
	\begin{equation}
		\lim_{t\rightarrow \infty} d_H(K(t), C(t))=0.
	\end{equation}
\end{lemma}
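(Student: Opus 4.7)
The plan is to apply Lemma \ref{lemma 5.20.1} to the slices $K(t) \subset C(t)$, regarded as convex bodies in the hyperplane $H(t)$ (identified with $\mathbb{R}^{n-1}$). For the resulting bound to stay useful as $t \to \infty$, I need to control the prefactor $1 + r^{-1} \max_{y \in \partial C(t)} |y - z|$ in that lemma, which forces me to exhibit a ball $B(z_t, r_t) \subset K(t)$ whose radius $r_t$ grows linearly in $t$, while also bounding $\max_{y \in \partial C(t)} |y - z_t|$ linearly in $t$.

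First I would construct such a ball. Since $u_* \in \Int C$, there exists $\rho_0 > 0$ with $B(u_*, \rho_0) \subset C$, hence $B(\lambda u_*, \lambda \rho_0) \subset C$ for every $\lambda \geq 0$. Fixing any $x_0 \in K$ and setting $\lambda = t - \langle x_0, u_*\rangle$, $z_t = x_0 + \lambda u_*$, the identity $\rec K = C$ (which gives $x_0 + C \subset K$) yields
\begin{equation}
B(z_t, \lambda\rho_0) = x_0 + B(\lambda u_*, \lambda\rho_0) \subset x_0 + C \subset K,
\end{equation}
with $z_t \in H(t)$; intersecting with $H(t)$ produces an $(n-1)$-dimensional ball of radius $\lambda\rho_0 \sim t\rho_0$ inside $K(t)$. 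For the companion estimate, every $y \in C$ satisfies $|y| \leq \langle y, u_*\rangle/\xi_{C,u_*}$, so $|y| \leq t/\xi_{C,u_*}$ for $y \in C(t)$ while $|z_t| \leq |x_0| + t$; hence $\max_{y \in \partial C(t)} |y - z_t| = O(t)$.

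Feeding these into Lemma \ref{lemma 5.20.1} produces a constant $M$ such that, for all sufficiently large $t$,
\begin{equation}
d_H(K(t), C(t)) \leq M\, \max_{x \in \partial K(t)} \dist(x, \partial C(t)).
\end{equation}
It remains to show the right-hand side vanishes. Every $x \in \partial K(t) = \partial K \cap H(t)$ satisfies $|x| \geq t$, so the $C$-asymptotic hypothesis forces $\dist(x, \partial C)$ to be uniformly small for large $t$. A short calculation---given $x' = \mu u \in \partial C$ realizing (nearly) the distance $\dist(x, \partial C)$ with $u \in \partial C \cap \sn$, rescaling to $x'' := (t/\langle u, u_*\rangle)\, u \in \partial C \cap H(t) \subset \partial C(t)$, and estimating $|x' - x''|$ via $\langle u, u_*\rangle \geq \xi_{C,u_*}$---gives $\dist(x, \partial C(t)) \leq (1 + \xi_{C,u_*}^{-1}) \dist(x, \partial C)$, which together with the previous display finishes the proof.

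The main technical point is constructing the inball of radius $\Theta(t)$ in $K(t)$: without it, the prefactor in Lemma \ref{lemma 5.20.1} would diverge linearly in $t$ and overwhelm the purely qualitative decay of $\dist(x, \partial C)$ supplied by the $C$-asymptotic hypothesis. The construction works precisely because $u_* \in \Int C$ and $\rec K = C$ together let $K$ ``inherit'' a ball of radius $\Theta(t)$ at height $t$ from a single fixed interior ball of $C$.
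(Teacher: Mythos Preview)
Your proof is correct and follows essentially the same strategy as the paper's: apply Lemma~\ref{lemma 5.20.1} to the slices $K(t)\subset C(t)$, produce an inball of radius $\Theta(t)$ in $K(t)$ so that the prefactor stays bounded, and then convert $\dist(x,\partial C)$ into $\dist(x,\partial C(t))$ by rescaling the nearest boundary point along its ray in $\partial C$. The only cosmetic difference is in how the inball is built---the paper scales a ball in $K(1)$ via the pseudo-cone property $tK(1)\subset K(t)$, whereas you translate a scaled ball $B(\lambda u_*,\lambda\rho_0)\subset C$ by a fixed $x_0\in K$ using $x_0+C\subset K$---but both yield the same linear growth and the remaining estimates match.
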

\begin{proof}
	Without loss of generality, assume that $K(1)$ has non-empty interior. (Otherwise, we may simply rescale $K$.) Hence, there exists $z\in \rn$ and $r>0$ such that
	\begin{equation}
		B(z,r)\cap C(1) \subset K(1).
	\end{equation}
	This, in turn, implies that for each $t\geq 1$,
	\begin{equation}
		B(tz, tr)\cap C(t)\subset K(t)
	\end{equation}
	
	Since $C(1)$ is compact, there exists $a_C>0$ (dependent on $C$ and $u_*$) such that 
	\begin{equation}
		1\leq |x|\leq a_C, \text{ for all } x\in C(1).
	\end{equation}
	This, in turn, implies that for each $t\geq 1$, we have
	\begin{equation}
		\label{eq 5.20.1}
		t\leq |x|\leq a_Ct, \text{ for all } x\in C(t).
	\end{equation}

	Let $\varepsilon\in (0,1)$ be arbitrary. Since $K$ is $C$-asymptotic, there exists $t_0>2$ such that for all $t>t_0$ and $x\in \partial K(t)$ (boundary relative to $H(t)$), we have $\dist(x, \partial C)<\varepsilon$. In particular, this implies the existence of $y\in \partial C$ such that $|y-x|<\varepsilon$. This implies that if $y\in \partial C(s)$, then $|s-t|<\varepsilon$. Consider $y_t = \frac{t}{s}y \in \partial C(t)$. Then, by \eqref{eq 5.20.1}
	\begin{equation}\label{eq 5.20.2}
		|y_t-x| = |y-x+(t/s-1)y|\leq |y-x| + |t/s-1|\cdot |y|<\varepsilon(1+a_C).
	\end{equation}   
	
	We now apply Lemma \ref{lemma 5.20.1} to $K(t)\subset C(t)$ in dimension $(n-1)$. Using \eqref{eq 5.20.2}, we get for $t>t_0$,
	\begin{equation}
	\begin{aligned}
		d_H(K(t), C(t))\leq& \left(1+t^{-1}r^{-1} \max_{y\in \partial C(t)}|y-tz|\right) \max_{x\in \partial K(t)} \dist(x, \partial C(t))\\
		=& \left(1+r^{-1} \max_{y\in \partial C(1)}|y-z|\right) \varepsilon (1+a_C).
	\end{aligned}	
	\end{equation}
	Since $\varepsilon>0$ is arbitrary, this implies the desired consequence. 
\end{proof}

\begin{prop}\label{prop 10.12.1}
	Let $K$ be a $C$-pseudo set. If $K$ is $C$-asymptotic, then $h_K\equiv 0$ on $\partial \Omega$.
\end{prop}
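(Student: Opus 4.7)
The plan is to split the goal $h_K \equiv 0$ on $\partial \Omega$ into two inequalities. The upper bound $h_K(v) \le 0$ for $v \in \partial \Omega$ is immediate from $K \subset C$ combined with $h_C \equiv 0$ on $\overline{\Omega}$, so the real task is to establish $h_K(v) \ge 0$ for every $v \in \partial \Omega$.

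For the lower bound, I would first reduce the problem to a statement about slices using Lemma \ref{lemma 10.4.1} and the identity displayed immediately after its proof, which yields $h_K(v) = \lim_{t \to \infty} h_{K(t)}(v)$ for $v \in \partial \Omega$. Hence it suffices to show $h_{K(t)}(v) \to 0$ as $t \to \infty$.

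Next I would observe that $h_{C(t)}(v) = 0$ for every $v \in \partial \Omega$ and every $t > 0$. Indeed, by definition of $\partial \Omega$ there exists a nonzero $u_0 \in \partial C$ with $\langle u_0, v \rangle = 0$, and pointedness forces $\langle u_0, u_* \rangle > 0$, so the positive multiple $\lambda u_0$ with $\lambda = t/\langle u_0, u_* \rangle$ lies in $\partial C \cap H(t) \subset C(t)$ and attains the value $0$ in direction $v$; the reverse inequality $h_{C(t)}(v) \le h_C(v) = 0$ is automatic from $C(t) \subset C$. Combining this with Lemma \ref{lemma 5.20.2}, which supplies $d_H(K(t), C(t)) \to 0$, and with the standard support-function estimate $|h_{K(t)}(v) - h_{C(t)}(v)| \le d_H(K(t), C(t))$ valid for unit vectors $v$, I conclude $h_{K(t)}(v) \to 0$ as required.

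All of the analytic work has been absorbed into Lemma \ref{lemma 5.20.2}, so the remaining obstacle is purely conceptual, namely identifying the correct reduction ($\lim_{t} h_{K(t)}(v) = h_K(v)$) and the correct comparison set ($C(t)$, with its support function vanishing on $\partial \Omega$). Once those two reductions are in place the argument closes in one line.
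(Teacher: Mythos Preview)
Your proposal is correct and follows essentially the same approach as the paper: both obtain $h_K(v)\le 0$ from $K\subset C$, and for the reverse inequality both invoke Lemma \ref{lemma 5.20.2} to compare $h_{K(t)}(v)$ with $h_{C(t)}(v)=0$ and then pass to the limit in $t$. Your version is simply more explicit about why $h_{C(t)}(v)=0$ and about the identity $h_K(v)=\lim_{t\to\infty}h_{K(t)}(v)$, whereas the paper compresses these into a one-line $\varepsilon$-argument.
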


\begin{proof}
Let $v\in \partial \Omega$ be arbitrary. Since $K\subset C$, we conclude that $h_K(v)\leq 0$. On the other hand, for every $\varepsilon>0$, by Lemma \ref{lemma 5.20.2}, there exists $t_0>0$ such that $h_{K(t)}(v)\geq h_{C(t)}(v)-\varepsilon=-\varepsilon$. By definition of support function, this implies $h_K(v)\geq -\varepsilon$. The fact that $\varepsilon>0$ is arbitrary now concludes the proof. 
\end{proof}

\section{Some results from PDE}\label{section relation with PDE}

The Minkowski problem \ref{problem 1} is equivalent to a certain Dirichlet problem of a Monge-Amp\`ere equation on an open convex subset of $\mathbb{R}^{n-1}$. Such an equivalence is well-known in the PDE community; see, for example, Chou-Wang \cite{MR1391950} and Huang-Liu \cite{MR4284099} where a computation when assuming sufficient regularity is provided. Once the equivalence is demonstrated in the category of smooth solutions, an approximation argument can be used to justify the equivalence between weak solutions. 

The first aim of this section is to provide a shorter and more direct proof that allows the translation between results to the Minkowski problem \ref{problem 1} and weak solutions to the Dirichlet problem of a Monge-Amp\`ere equation on an open convex domain of $\mathbb{R}^{n-1}$. 

The second aim of this section is to utilize such equivalence to translate existing results from the theory of Monge-Amp\`ere equations to the Minkowski problem \ref{problem 1}. Among other things, this effort provides much shorter proofs to Schneider \cite[Theorems 2,3]{MR3810252} and Schneider \cite[Theorem 1]{MR4264230}. From a different perspective, one may also view Schneider \cite[Theorems 2,3]{MR3810252} and Schneider \cite[Theorem 1]{MR4264230} as alternative proofs for existing results from the theory of Monge-Amp\`ere equations.

Let $\Lambda\subset \mathbb{R}^{n-1}$ be a convex domain and $u:\Lambda\rightarrow \mathbb{R}$ be a convex function. Recall that the \emph{subdifferential} of $u$ at $x\in \Lambda$ is given by
\begin{equation}
	\partial u(x) = \{p\in \mathbb{R}^{n-1}: u(z)\geq u(x)+\langle p, z-x\rangle, \text{ for all }z\in \Lambda\}.
\end{equation}
If $\Xi\subset \Lambda$, then we write
\begin{equation}
	\partial u(\Xi)=\bigcup_{x\in \Xi} \partial u(x).
\end{equation}
The \emph{Monge-Amp\`{e}re measure} of $u$, denoted by $\mathcal{M}_u$, is a Borel measure on $\Lambda$, given by 
\begin{equation}
	\mathcal{M}_u(\Xi) = \mathcal{H}^{n-1}(\partial u(\Xi)),
\end{equation} 
for each Borel subset $\Xi\subset \Lambda$.

Recall that we fix a pointed, closed, convex cone $C$ with a nonempty interior and have chosen a coordinate system so that $u_*=-e_1$ for some orthonormal basis $e_1,\dots, e_n$ of $\rn$. We will frequently write points in $\rn$ as $(t,x)\in \mathbb{R}\times \mathbb{R}^{n-1}$ where $x=(x_2, \dots, x_{n})\in \Span\{e_2, \dots, e_n\}$.

We identify the open hemisphere $S^+ = \{v\in \sn: \langle v, e_1\rangle >0\}$ with the hyperplane $\{1\}\times \mathbb{R}^{n-1} \cong \mathbb{R}^{n-1}$ via the map $\Phi: \mathbb{R}^{n-1}\rightarrow S^+$:
\begin{equation}\label{eq 10.12.1}
	\Phi(x) = \frac{(1, x)}{\sqrt{1+|x|^2}}.
\end{equation}
It is apparent that $\Phi$ is a bijection. In fact,
\begin{equation}
	\Phi^{-1}(v)= \frac{1}{\langle v, e_1\rangle}(\langle v, e_2\rangle, \dots, \langle v, e_n\rangle).
\end{equation}
Let $K$ be a $C$-pseudo set. With the help of $\Phi$, we can identify the support function $h_K:\overline{\Omega}\rightarrow \mathbb{R}$ with a convex function $u: \overline{\Phi^{-1}(\Omega)}\rightarrow \mathbb{R}$ via the relation:
\begin{equation}
\label{eq 10.9.1}
	u(x) = \sqrt{1+|x|^2} h_K(\Phi(x))= h_K(1,x),
\end{equation}
for each $x\in \overline{\Phi^{-1}(\Omega)}$. In addition, we have $u=0$ on $\partial \left(\Phi^{-1}(\Omega)\right)$ if and only if $h_K=0$ on $\partial \Omega$. 

We require the following basic lemma.

\begin{lemma}\label{lemma 10.9.1}
	Let $K$ be a $C$-pseudo set and $x\in \Phi^{-1}(\Omega)$. Define $u:\overline{\Phi^{-1}(\Omega)}\rightarrow \mathbb{R}$ as in \eqref{eq 10.9.1}. Then, $p\in \partial u(x)$ if and only if there exists $t\in \mathbb{R}$ such that $(t,p) \in \tau_K(\Phi(x))$. Moreover, in this case, $t=h_K(1,x)-\langle p,x\rangle$.
\end{lemma}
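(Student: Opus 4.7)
The plan is to recognize both conditions as equivalent encodings of a single geometric fact: there is a supporting hyperplane to $K$ at the point $(t,p)$ with outer normal direction $\Phi(x)$. The central algebraic observation, obtained by multiplying by $\sqrt{1+|z|^2}$ and using $u(z) = h_K(1,z)$ together with the $1$-homogeneity of $h_K$, is that for every $z \in \Phi^{-1}(\Omega)$,
\begin{equation*}
  \langle (t,p), \Phi(z)\rangle \leq h_K(\Phi(z))
  \;\Longleftrightarrow\;
  t + \langle p, z\rangle \leq u(z),
\end{equation*}
with equality at $z = x$ exactly when $t = h_K(1,x) - \langle p, x\rangle$. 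This already forces the ``moreover'' formula whenever a supporting point $(t,p)$ exists.

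For the forward direction, starting from $(t,p) \in \tau_K(\Phi(x))$, the membership $(t,p) \in K$ gives $\langle (t,p), \Phi(z)\rangle \leq h_K(\Phi(z))$ for all $z \in \Phi^{-1}(\Omega)$, while the tangency relation at $\Phi(x)$ yields equality there. The translation above then rewrites these as $u(z) \geq u(x) + \langle p, z-x\rangle$, which is exactly $p \in \partial u(x)$.

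For the reverse direction, given $p \in \partial u(x)$, I would set $t := u(x) - \langle p, x\rangle$ and translate the subdifferential inequality back to $\langle (t,p), v\rangle \leq h_K(v)$ for $v \in \Omega$ (with equality at $v = \Phi(x)$), extended to $v \in \text{int}\, C^\circ$ by positive homogeneity. To conclude $(t,p) \in K$ from this half-space information, I would invoke the representation
\begin{equation*}
  K = \bigcap_{v \in \text{int}\, C^\circ}\{y \in \mathbb{R}^n : \langle y,v\rangle \leq h_K(v)\}.
\end{equation*}
This rests on two facts: first, $h_K \equiv +\infty$ off $C^\circ$, because $\rec K \supset C$ allows one to push $\langle y + \lambda c, v\rangle \to \infty$ along any $c \in C$ with $\langle c, v\rangle > 0$, so half-space constraints from normals outside $C^\circ$ are vacuous; and second, a constraint at $v \in \partial C^\circ$ is the limit of the genuine constraints at $v + \varepsilon v_0 \in \text{int}\, C^\circ$ for any $v_0 \in \text{int}\, C^\circ$, using subadditivity and positive homogeneity of $h_K$ to pass to $\varepsilon \to 0$. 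Once $(t,p) \in K$ is established, the tangency $\langle (t,p), \Phi(x)\rangle = h_K(\Phi(x))$ at the nonzero direction $\Phi(x) \in \Omega$ prevents $(t,p)$ from lying in the interior of $K$, giving $(t,p) \in \partial K$ and hence $(t,p) \in \tau_K(\Phi(x))$.

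The main obstacle is precisely the half-space representation of $K$ indexed by $\text{int}\, C^\circ$ alone; this is where the $C$-pseudo hypothesis is genuinely used. Everything else reduces to direct bookkeeping with the bijection $\Phi$ and the $1$-homogeneity of $h_K$.
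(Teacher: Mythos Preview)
Your proposal is correct and follows essentially the same route as the paper's proof: both directions reduce to translating the subgradient inequality $u(z)\ge u(x)+\langle p,z-x\rangle$ into the support-function inequality $h_K(v)\ge\langle(t,p),v\rangle$ via $1$-homogeneity, with equality at $v=\Phi(x)$ forcing $t=h_K(1,x)-\langle p,x\rangle$. The only difference is one of emphasis: the paper simply asserts ``Hence $(h_K(1,x)-\langle p,x\rangle,p)\in K$'' after obtaining $h_K(v)\ge\langle(t,p),v\rangle$ on $\overline{\Omega}$, whereas you unpack the half-space representation $K=\bigcap_{v\in\text{int}\,C^\circ}H^-(v,h_K(v))$ explicitly (handling the boundary constraints by a limit argument) and you also note explicitly why equality at $\Phi(x)\in\Omega$ forces $(t,p)\in\partial K$. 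Both elaborations are sound but are treated as standard in the paper.
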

\begin{proof}
	We first note that if $(t,p)\in \tau_K(\Phi(x))$, then $t=h_K(1,x)-\langle p,x\rangle$. Indeed, by the definition of $\tau_K$, we have
	\begin{equation}
		\langle (t,p), \Phi(x)\rangle = h_K(\Phi(x)).
	\end{equation}
	This implies
	\begin{equation}
		t+\langle p, x\rangle=\langle (t,p), (1,x)\rangle = h_K(1,x),
	\end{equation}
	which verifies the claim. 
	
	We now assume that $p\in \partial u(x)$ and show that $(h_K(1,x)-\langle p,x\rangle, p)\in \tau_K(\Phi(x))$. Indeed, by the definition of subdifferential, for every $y\in \overline{\Phi^{-1}(\Omega)}$, we have
	\begin{equation}
		h_K(1,y)=u(y)\geq u(x) + \langle p, y-x\rangle=\langle (h_K(1,x)-\langle p,x\rangle, p), (1,y)\rangle.
	\end{equation}
	Since $h_K$ is $1$-homogeneous, this implies that for all $v\in \overline{\Omega}$, we have
	\begin{equation}
		h_K(v)\geq \langle (h_K(1,x)-\langle p,x\rangle, p), v\rangle.
	\end{equation}
	Hence $(h_K(1,x)-\langle p,x\rangle, p)\in K$. Since $\langle(h_K(1,x)-\langle p,x\rangle, p), (1,x)\rangle=h_K(1,x)$, we have $(h_K(1,x)-\langle p,x\rangle, p)\in \tau_K(\Phi(x))$.
	
	Finally, we assume $(h_K(1,x)-\langle p,x\rangle, p)\in \tau_K(\Phi(x))$ and show $p\in \partial u(x)$. To see this, since $(h_K(1,x)-\langle p,x\rangle, p)\in K$, by the definition of the support function, we have
	\begin{equation}
		h_K(v)\geq \langle (h_K(1,x)-\langle p,x\rangle, p), v\rangle,
	\end{equation}
	for all $v\in \overline{\Omega}$. Since $h_K$ is $1$-homogeneous, this implies that for every $y\in \overline{\Phi^{-1}(\Omega)}$, we have
	\begin{equation}
		u(y)=h_K(1,y)\geq \langle (h_K(1,x)-\langle p,x\rangle, p), (1,y)\rangle=u(x)+\langle p, y-x\rangle.
	\end{equation}
	This immediately implies that $p\in \partial u(x)$.
	\end{proof}
	
	A direct consequence of Lemma \ref{lemma 10.9.1} is the following result.
	\begin{lemma}\label{lemma 10.9.2}
		Let $K$ be a $C$-pseudo set. Define $u:\overline{\Phi^{-1}(\Omega)}\rightarrow \mathbb{R}$ as in \eqref{eq 10.9.1}.
		 For each Borel set $\eta\subset \Omega$, we have
		\begin{equation}
			\monge_u(\Phi^{-1}(\eta)) = \int_{\eta} |\langle v, e_1\rangle| dS_K(v).
		\end{equation}
	\end{lemma}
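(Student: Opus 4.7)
The plan is to identify $\partial u(\Phi^{-1}(\eta))$ as the image of $\tau_K(\eta)$ under the vertical projection $\pi:\rn\to \mathbb{R}^{n-1}$, $\pi(t,p)=p$, and then evaluate the $\mathcal{H}^{n-1}$-measure of this image by the area formula on the convex hypersurface $\partial K$. The factor $|\langle v,e_1\rangle|$ is precisely the Jacobian of $\pi$ restricted to the tangent hyperplane at a boundary point of $K$ with outer unit normal $v$. By Lemma~\ref{lemma 10.9.1}, $p\in \partial u(x)$ exactly when $(h_K(1,x)-\langle p,x\rangle,p)\in \tau_K(\Phi(x))$; taking the union over $x\in \Phi^{-1}(\eta)$ yields $\partial u(\Phi^{-1}(\eta))=\pi(\tau_K(\eta))$.

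To invoke the area formula I first check that $\pi|_{\tau_K(\eta)}$ is injective. If $(t_1,p),(t_2,p)\in \tau_K(\eta)$, pick $v_i\in \eta$ with $\langle (t_i,p),v_i\rangle=h_K(v_i)$; applying the support inequality $\langle (t_1,p),v_2\rangle\leq h_K(v_2)$ and its symmetric companion, together with $\langle v_i,e_1\rangle>0$---which holds because $u_*=-e_1$ and $v_i\in \Omega$---forces $t_1=t_2$. Next, at $\mathcal{H}^{n-1}$-a.e. $y\in \partial K$ the outer unit normal $\nu_K(y)$ is well defined and the tangent hyperplane $T_y=\nu_K(y)^\perp$ is the graph over $\mathbb{R}^{n-1}$ of a linear function whose slope is determined by $\nu_K(y)$; a short linear-algebra computation then gives that $\pi|_{T_y}$ has absolute Jacobian $|\langle \nu_K(y),e_1\rangle|$, which is strictly positive on $\tau_K(\eta)$ because $\nu_K(y)\in \overline{\eta}\subset \overline{\Omega}$.

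Combining the injectivity with the area formula applied to the convex (hence locally Lipschitz) hypersurface $\partial K$ then yields
\[
\monge_u(\Phi^{-1}(\eta))=\mathcal{H}^{n-1}\bigl(\pi(\tau_K(\eta))\bigr)=\int_{\tau_K(\eta)}|\langle \nu_K(y),e_1\rangle|\,d\mathcal{H}^{n-1}(y).
\]
Since $S_K$ is by definition the pushforward of $\mathcal{H}^{n-1}\mres \partial K$ under the Gauss map $\nu_K$---and $\tau_K(\eta)=\nu_K^{-1}(\eta)$ up to an $\mathcal{H}^{n-1}$-null set of singular boundary points---the right-hand side equals $\int_\eta |\langle v,e_1\rangle|\,dS_K(v)$, which is what we want. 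The main technical ingredient is the area-formula computation on the Lipschitz hypersurface $\partial K$; to accommodate possibly infinite $S_K(\eta)$ one exhausts $\eta$ by compact subsets, on which both sides are finite by Corollary~\ref{coro 11.20.1}, and passes to the limit by monotone convergence.
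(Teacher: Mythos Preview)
Your proof is correct and follows the same route as the paper: both use Lemma~\ref{lemma 10.9.1} to identify $\partial u(\Phi^{-1}(\eta))$ with the orthogonal projection of $\tau_K(\eta)$ onto $e_1^\perp$, and then compute the $(n-1)$-dimensional measure of that projection via the Jacobian $|\langle \nu_K,e_1\rangle|$. The paper compresses your injectivity check, area-formula computation, and exhaustion argument into the single phrase ``by the definition of surface area measure''; your write-up simply makes these steps explicit.
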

	\begin{proof}
		By Lemma \ref{lemma 10.9.1}, we have
		\begin{equation}
			\monge_u(\Phi^{-1}(\eta)) = \mathcal{H}^{n-1}(P_{e_1^\perp} \tau_K(\eta)),
		\end{equation}
		where $P_{e_1^\perp} \tau_K(\eta)$ is the image of the orthogonal projection of $\tau_K(\eta)$ onto $e_1^\perp$.
		Hence,  by the definition of surface area measure, we have
		\begin{equation}
			\monge_u(\Phi^{-1}(\eta)) = \int_{\eta} |\langle v, e_1\rangle| dS_K(v).
		\end{equation}		
	\end{proof}

	For each locally finite Borel measure $\mu$ on $\Omega$, let $\widetilde\mu $ be a locally finite Borel measure on $\Phi^{-1}(\Omega)$ given by 
	\begin{equation}\label{eq 10.9.2}
		\widetilde{\mu}(\widetilde{\eta}) = \int_{\Phi(\widetilde{\eta})} |\langle v, e_1\rangle| d\mu(v),
	\end{equation}  
	for each Borel subset $\widetilde{\eta}\subset \Phi^{-1}(\Omega)$. Note that $\mu$ is a finite measure if and only if  $\widetilde{\mu}$ is a finite measure.
	
\begin{theorem}\label{thm 11.26.1}
	Let $\mu$ be a locally finite Borel measure on $\Omega$ and $\widetilde{\mu}$ be as in \eqref{eq 10.9.2}. Assume $u\in C(\overline{\Phi^{-1}(\Omega)})$ is convex and solves
			\begin{equation}\label{eq 10.9.3}
				\begin{cases}
					\monge_u= \widetilde{\mu}, &\text{ on } \Phi^{-1}(\Omega),\\
					u = 0, &\text{ on } \partial \big(\Phi^{-1}(\Omega)\big).
				\end{cases}
			\end{equation} Define
	\begin{equation}
		K = \bigcap_{v\in \overline{\Omega}} \{z\in \mathbb{R}^n: \langle z,v\rangle\leq \langle v, e_1\rangle \cdot u(\Phi^{-1}(v))\}.
	\end{equation}
	Then $K$ is $C$-asymptotic and $S_K=\mu$ on $\Omega$. 
\end{theorem}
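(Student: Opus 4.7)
The plan is to establish in turn: (I) $K$ is nonempty, $C$-pseudo, and sits inside $C$; (II) $h_K(v) = g(v) := \langle v, e_1\rangle u(\Phi^{-1}(v))$ on $\Omega$; (III) $h_K \equiv 0$ on $\partial\Omega$, whence $K$ is $C$-asymptotic by Theorem \ref{thm 5.20.1}; and (IV) $S_K = \mu$ by Lemma \ref{lemma 10.9.2}.

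\textbf{Steps (I)--(II).} Since $u$ is continuous and convex on the bounded convex domain $\widetilde\Omega := \Phi^{-1}(\Omega)$ with $u = 0$ on $\partial\widetilde\Omega$, we have $u \leq 0$, so $g \leq 0$ on $\overline\Omega$ and $g \equiv 0$ on $\partial\Omega$. The half-space constraints for $v \in \partial\Omega$ therefore reduce to $\langle z, v\rangle \leq 0$, and intersecting over $\overline\Omega$ yields $K \subset C$. For any $c \in C$ and $x \in K$, $\langle x + \lambda c, v\rangle \leq \langle x, v\rangle \leq g(v)$ for all $v \in \overline\Omega \subset C^\circ$, so $C \subset \rec K$, giving $\rec K = C$. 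To identify $h_K$ on $\Omega$ and simultaneously verify $K \neq \emptyset$, fix $x \in \widetilde\Omega$ and $p \in \partial u(x)$, and let $z := (u(x) - \langle p, x\rangle, p)$. The subdifferential inequality, extended to $\overline{\widetilde\Omega}$ by continuity of $u$, gives
\[
\langle z, \Phi(y)\rangle = \frac{u(x) + \langle p, y - x\rangle}{\sqrt{1+|y|^2}} \leq \frac{u(y)}{\sqrt{1+|y|^2}} = g(\Phi(y))
\]
for every $y \in \overline{\widetilde\Omega}$, so $z \in K$. Testing at $y = x$ shows $\langle z, \Phi(x)\rangle = g(\Phi(x))$, so $h_K(v) = g(v)$ for all $v \in \Omega$.

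\textbf{Step (III), the main obstacle.} The subtle point is passing from $u|_{\partial\widetilde\Omega} \equiv 0$ to $h_K|_{\partial\Omega} \equiv 0$; this requires controlling subgradients that may blow up near $\partial\widetilde\Omega$. Fix $v \in \partial\Omega$, set $y_0 := \Phi^{-1}(v) \in \partial\widetilde\Omega$, pick any interior $y^* \in \widetilde\Omega$ with $u(y^*) < 0$ (the degenerate case $u \equiv 0$ forces $\mu = 0$ and $K = C$, which is consistent with the conclusion), and approach along the radial segment $y_k := (1-\lambda_k) y^* + \lambda_k y_0$, $\lambda_k \to 1^-$. For $p_k \in \partial u(y_k)$, form $z_k := (u(y_k) - \langle p_k, y_k\rangle, p_k) \in K$ as in step (II). A direct computation gives
\[
\langle z_k, v\rangle = \frac{u(y_k) + \langle p_k, y_0 - y_k\rangle}{\sqrt{1+|y_0|^2}}.
\]
Using $y_0 - y_k = (1-\lambda_k)(y_0 - y^*)$ and $y^* - y_k = -\lambda_k(y_0 - y^*)$, two applications of the (extended) subdifferential inequality at $y_k$---one at $y_0$ and one at $y^*$---squeeze $\langle p_k, y_0 - y_k\rangle$ between $(1-\lambda_k)(u(y_k) - u(y^*))/\lambda_k$ and $-u(y_k)$. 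Both bounds vanish as $k \to \infty$ (since $u(y_k) \to u(y_0) = 0$, $u(y^*)$ is fixed, and $1-\lambda_k \to 0$), so $\langle z_k, v\rangle \to 0$. Since $z_k \in K$, $h_K(v) \geq \langle z_k, v\rangle$ for each $k$, hence $h_K(v) \geq 0$; combined with the bound $h_K(v) \leq g(v) = 0$ we get $h_K(v) = 0$, and Proposition \ref{prop 10.9.1} delivers that $K$ is $C$-asymptotic.

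\textbf{Step (IV).} With \eqref{eq 10.9.1} now established, Lemma \ref{lemma 10.9.2} together with the hypothesis $\monge_u = \widetilde\mu$ yields, for every Borel $\eta \subset \Omega$,
\[
\int_\eta |\langle v, e_1\rangle|\, dS_K(v) = \monge_u(\Phi^{-1}(\eta)) = \widetilde\mu(\Phi^{-1}(\eta)) = \int_\eta |\langle v, e_1\rangle|\, d\mu(v).
\]
Since $|\langle v, e_1\rangle| \geq \xi_{C, u_*} > 0$ uniformly on $\overline\Omega$, dividing by this strictly positive, bounded, measurable weight gives $S_K = \mu$ on $\Omega$.
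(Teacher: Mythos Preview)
Your proof is correct and follows the paper's structure closely: steps (I), (II), and (IV) are essentially identical to the paper's argument (same subdifferential-point construction for $h_K=g$ on $\Omega$, same use of Lemma \ref{lemma 10.9.2} for the measure identification).

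The only substantive difference lies in step (III). The paper observes that once $h_K(1,\cdot)=u$ has been established on $\Phi^{-1}(\Omega)$, the convexity of $h_K(1,\cdot)$ along a segment $x_\lambda=(1-\lambda)x_0+\lambda x_1$ from a boundary point $x_0$ to an interior point $x_1$ gives
\[
u(x_\lambda)=h_K(1,x_\lambda)\leq (1-\lambda)h_K(1,x_0)+\lambda h_K(1,x_1),
\]
and letting $\lambda\to 0^+$ with the continuity of $u$ yields $u(x_0)\leq h_K(1,x_0)$ in one line. Your route instead constructs explicit witnesses $z_k\in K$ along the segment and squeezes $\langle p_k,y_0-y_k\rangle$ between two subdifferential bounds. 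This is a valid and self-contained alternative, but it does a bit more work than necessary: the paper leverages the convexity of $h_K$ (already free from its definition as a supremum of linear functionals) to avoid any direct subgradient estimate at the boundary. Incidentally, your case distinction on $u(y^*)<0$ is unnecessary, since your lower bound $\tfrac{1-\lambda_k}{\lambda_k}(u(y_k)-u(y^*))\to 0$ regardless of the sign of $u(y^*)$.
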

\begin{proof}
	We first prove that $K$ is $C$-pseudo. Indeed, since $u=0$ on $\partial \big(\Phi^{-1}(\Omega)\big)$, the definition of $K$ implies that $K\subset C$. Let $x\in K$ be arbitrary. Since $K\subset C$, we have $\langle x, v\rangle\leq 0$ for any $v\in \overline{\Omega}$. Therefore, for each $\lambda\geq 1$ and $v\in \overline{\Omega}$, we have 
	\begin{equation}
		\langle \lambda x,v\rangle= \langle x, v\rangle+(\lambda -1) \langle x, v\rangle \leq \langle x, v\rangle \leq \langle v, e_1\rangle \cdot u(\Phi^{-1}(v)).
	\end{equation}
	Thus, we have $\lambda x\in K$, and consequently, $K$ is a pseudo-cone. Using the same argument, we see that for every $x\in K$ and $y\in C$, we have
	\begin{equation}
		\langle x+y, v\rangle = \langle x, v\rangle+\langle y, v\rangle \leq \langle x, v\rangle \leq \langle v, e_1\rangle \cdot u(\Phi^{-1}(v)),
	\end{equation}
	for every $v\in \overline{\Omega}$. This suggests that $x+C\subset K$, and consequently, $\rec K\supset C$. This, when combined with the fact that $K\subset C$, implies that $K$ is $C$-pseudo. 
	
	We claim that $h_K(1,x)=u(x)$ on $\overline{\Phi^{-1}(\Omega)}$. It is clear from the definition of support function and the choice of $K$ that $h_K(1,x)\leq u(x)$. Therefore, it remains to show
	\begin{equation}\label{eq 11.27.1}
		h_K(1,x)\geq u(x).
	\end{equation} 
	
	We first prove \eqref{eq 11.27.1} on $\Phi^{-1}(\Omega)$. Note that 
		\begin{equation}
			K = \bigcap_{x\in \overline{\Phi^{-1}(\Omega)}} \{z\in \rn: \langle z, (1,x)\rangle\leq u(x)\}.
		\end{equation}
	Fix some $x_0\in \Phi^{-1}(\Omega)$. Since $u$ is convex, there exists $p\in \mathbb{R}^{n-1}$ such that $p\in \partial u(x_0)$; that is, for each $y\in \overline{\Phi^{-1}(\Omega)}$, we have
		\begin{equation}
			u(y)\geq u(x_0)+\langle p, y-x_0\rangle.
		\end{equation}
		This is equivalent to 
		\begin{equation}
			u(y)\geq \Big\langle(u(x_0)-\langle p, x_0\rangle,p), (1,y)\Big \rangle.
		\end{equation}
		By definition of $K$, this means $(u(x_0)-\langle p, x_0\rangle,p)\in K$. Hence,
		\begin{equation}
			h_K(1,x_0)\geq \Big\langle(u(x_0)-\langle p, x_0\rangle,p), (1,x_0)\Big \rangle=u(x_0).
		\end{equation}
		
		We now prove \eqref{eq 11.27.1} on $\partial\big(\Phi^{-1}(\Omega)\big)$. Fix an arbitrary $x_0\in \partial\big(\Phi^{-1}(\Omega)\big)$. Since $\overline{\Phi^{-1}(\Omega)}$ is convex with non-empty interior, we may find $x_1\in \Phi^{-1}(\Omega)$ such that for each $\lambda\in (0,1)$, it holds that $x_\lambda := (1-\lambda) x_0+\lambda x_1 \in \Phi^{-1}(\Omega)$. By convexity of $h_K$ and that $h_K(1,x)=u(x)$ for $x\in \Phi^{-1}(\Omega)$, we have
		\begin{equation}
			u(x_\lambda)=h_K(1,x_\lambda)\leq (1-\lambda )h_K(1,x_0)+\lambda h_{K}(1,x_1).
		\end{equation} 
		Using the fact that $u\in C(\overline{\Phi^{-1}(\Omega)})$ and taking the limit as $\lambda\rightarrow 0^+$, we have
		\begin{equation}
			u(x_0)\leq h_K(1,x_0).
		\end{equation}
		
		Overall, we have shown $h_K(1,x)=u(x)$ on $\overline{\Phi^{-1}(\Omega)}$. By  the fact that $u=0$ on $\partial\big(\Phi^{-1}(\Omega)\big)$ and Theorem \ref{thm 5.20.1}, this implies that $K$ is $C$-asymptotic.
				
		Let $\eta\subset \Omega$ be an arbitrary Borel subset. Then, by Lemma \ref{lemma 10.9.2}, we have
		\begin{equation}
			\widetilde{\mu}(\Phi^{-1}(\eta)) = \monge_{u}(\Phi^{-1}(\eta)) = \int_\eta |\langle v, e_1\rangle|dS_K(v). 
		\end{equation}
		This, when combined with the definition of $\widetilde{\mu}$, implies that 
		\begin{equation}
			\int_{\eta} |\langle v, e_1\rangle|d\mu(v) = \int_\eta |\langle v, e_1\rangle| dS_K(v).
		\end{equation}
		Since $\langle v, e_1\rangle>0$ on $\Omega$, we have $\mu=S_K$ on $\Omega$.
\end{proof}

	\begin{theorem}\label{thm 10.9.1}
	Let $\mu$ be a locally finite Borel measure on $\Omega$. If $K$ is a $C$-asymptotic set and satisfies $S_K=\mu$ on $\Omega$, then the convex function $u: \overline{\Phi^{-1}(\Omega)}\rightarrow \mathbb{R}$ defined by $u(x)=h_K(1,x)$ is continuous and solves
			\begin{equation}
				\begin{cases}
					\monge_u= \widetilde{\mu}, &\text{ on } \Phi^{-1}(\Omega),\\
					u = 0, &\text{ on } \partial \big(\Phi^{-1}(\Omega)\big).
				\end{cases}
			\end{equation}
	\end{theorem}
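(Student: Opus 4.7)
The plan is to verify the four required properties of $u$ one at a time, and in each case the heavy lifting has already been done in Section \ref{section pseudo cones} and in Lemmas \ref{lemma 10.9.1}--\ref{lemma 10.9.2}. The function $u(x)=h_K(1,x)$ inherits convexity directly from the support function: given $x_1,x_2\in\overline{\Phi^{-1}(\Omega)}$ and $\lambda\in[0,1]$, the sublinearity of $h_K$ gives
\begin{equation}
u((1-\lambda)x_1+\lambda x_2)=h_K(1,(1-\lambda)x_1+\lambda x_2)\leq (1-\lambda)u(x_1)+\lambda u(x_2),
\end{equation}
so $u$ is a convex function on a convex domain and therefore continuous on the interior $\Phi^{-1}(\Omega)$ automatically.

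For the boundary value, I invoke Theorem \ref{thm 5.20.1}: because $K$ is $C$-asymptotic, $h_K\equiv 0$ on $\partial\Omega$. Since $u(x)=\sqrt{1+|x|^2}\,h_K(\Phi(x))$ and $\Phi$ carries $\partial\bigl(\Phi^{-1}(\Omega)\bigr)$ onto $\partial\Omega\cap S^+$, this yields $u\equiv 0$ on $\partial\bigl(\Phi^{-1}(\Omega)\bigr)$. To upgrade continuity across the boundary, I will use that $h_K$ is lower semi-continuous on $\overline{\Omega}$ (being the supremum of the family of continuous linear functionals $\langle x,\cdot\rangle$, $x\in K$) while simultaneously $h_K\leq h_C\equiv 0$ on $\overline{\Omega}$ since $K\subset C$. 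For any sequence $v_n\to v\in\partial\Omega$ one then has $0\geq \limsup h_K(v_n)\geq \liminf h_K(v_n)\geq h_K(v)=0$, so $h_K$ is continuous on all of $\overline{\Omega}$, and consequently $u\in C(\overline{\Phi^{-1}(\Omega)})$.

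It remains to identify the Monge--Amp\`ere measure of $u$ with $\widetilde{\mu}$. Lemma \ref{lemma 10.9.2} (which is an immediate consequence of Lemma \ref{lemma 10.9.1}) already gives, for every Borel set $\eta\subset\Omega$,
\begin{equation}
\monge_u\bigl(\Phi^{-1}(\eta)\bigr)=\int_{\eta}|\langle v,e_1\rangle|\,dS_K(v).
\end{equation}
Substituting the hypothesis $S_K=\mu$ on $\Omega$ and comparing with the definition \eqref{eq 10.9.2} of $\widetilde{\mu}$, the right-hand side becomes $\widetilde{\mu}\bigl(\Phi^{-1}(\eta)\bigr)$. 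Since $\Phi^{-1}$ is a bijection between $\Omega$ and $\Phi^{-1}(\Omega)$, this equality over all Borel $\eta\subset\Omega$ is exactly $\monge_u=\widetilde{\mu}$ on $\Phi^{-1}(\Omega)$.

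I do not anticipate a serious obstacle: every ingredient, including the delicate identification of $\partial u$ with $\tau_K$ via $\Phi$ and the measure-theoretic translation, has been isolated in the preceding lemmas. The only point that might look subtle---continuity of $u$ up to the boundary of $\Phi^{-1}(\Omega)$---is handled cleanly by combining lower semi-continuity of the support function with the sign constraint $h_K\leq 0$, both of which are free consequences of $K\subset C$ and $K$ being closed and convex. So the proof is essentially a transcription of already-established facts into the $(1,x)$-coordinate picture.
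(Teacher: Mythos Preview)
Your proof is correct and follows essentially the same route as the paper's: invoke Theorem \ref{thm 5.20.1} for the boundary value, combine lower semi-continuity of $h_K$ with the sign constraint $h_K\leq 0$ (from $K\subset C$) to get continuity on $\overline{\Omega}$, and then apply Lemma \ref{lemma 10.9.2} together with $S_K=\mu$ to identify $\monge_u$ with $\widetilde{\mu}$. The only difference is that you spell out the convexity of $u$ and the sequential continuity argument in slightly more detail than the paper does.
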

	\begin{proof}

		Since $K\subset C$, we have $h_K\leq 0$ on $\overline{\Omega}$. This, the lower semi-continuity of $h_K$, and the fact that $h_K=0$ on $\partial \Omega$ (due to Theorem \ref{thm 5.20.1}), implies that $h$ is continuous on $\overline{\Omega}$. Consequently, $u$ is a continuous function on $\overline{\Phi^{-1}(\Omega)}$ and $u=0$ on $\partial \big(\Phi^{-1}(\Omega)\big)$. Let $\widetilde{\eta} \subset \Phi^{-1}(\Omega)$ be an arbitrary Borel set and set $\eta = \Phi(\widetilde{\eta})$. By Lemma \ref{lemma 10.9.2}, we have
		\begin{equation}
			\monge_u(\widetilde{\eta}) = \int_\eta |\langle v, e_1\rangle| dS_{K}(v) = \int_\eta |\langle v, e_1\rangle | d\mu(v) = \widetilde{\mu}(\widetilde{\eta}).
		\end{equation}
	\end{proof}

In particular, if $\mu=fdv$ for some $f\in C^{\infty}(\Omega)$ and $h=h_K\in C^{\infty}(\Omega)\cap C(\overline{\Omega})$, then for each Borel subset $\eta \subset \Omega$,  we have
\begin{equation}
\begin{aligned}
	\monge_u(\Phi^{-1}(\eta))&=\int_\eta |\langle v, e_1\rangle| dS_K(v) =\int_\eta |\langle v, e_1\rangle| d\mu (v)\\
	&= \int_\eta |\langle v, e_1\rangle| f(v)dv=\int_{\Phi^{-1}(\eta)} (1+|x|^2)^{-\frac{n+1}{2}}f(\Phi(x))dx. 
\end{aligned}
\end{equation}
This implies that $u\in C^{\infty}(\overline{\Phi^{-1}(\Omega)})\cap C(\overline{\Phi^{-1}(\Omega)})$ solves 
\begin{equation}
	\det(\nabla^2 u(x)) = (1+|x|^2)^{-\frac{n+1}{2}}f\left(\frac{(1,x)}{\sqrt{1+|x|^2}}\right), \text{ on } \Phi^{-1}(\Omega).
\end{equation}

With the help of Theorem \ref{thm 10.9.1}, we may derive many results on the surface area measure of $C$-asymptotic sets (or, sometimes, $C$-close sets) from known PDE results. 

\begin{theorem}[Existence and uniqueness of solution to the Minkowski problem for $C$-close sets in the case of finite $\mu$]\label{thm 10.9.3}
 Let $\mu$ be a finite Borel measure on $\Omega$. Then there exists a unique $C$-close set $K\subset C$ such that 
\begin{equation}
	S_K=\mu.
\end{equation}
\begin{proof}
	Define $\widetilde{\mu}$ on $\Phi^{-1}(\Omega)$ as in \eqref{eq 10.9.2}. Since $\mu$ is a finite measure, we have that $\widetilde{\mu}$ is also finite. By Figalli \cite[Theorem 2.13]{MR3617963}, there exists a convex $u\in C(\overline{\Phi^{-1}(\Omega)})$ such that $u$ solves \eqref{eq 10.9.3}. By Theorem \ref{thm 11.26.1}, there exists a $C$-asymptotic set $K\subset C$ such that $S_K=\mu$ on $\Omega$. 
	
	We claim that $K$ is $C$-close; that is $V(C\setminus K)<\infty$. The relative isoperimetric inequality, Ritor\'e-Vernadakis \cite[Theorem 4.11]{MR3335407}, implies the existence of $c_0>0$, independent of $t>0$, such that
	\begin{equation}
		|\mu(\Omega)|= |S_K(\Omega)|\geq \mathcal{H}^{n-1}\{x\in \partial K\cap \text{int}\,\big(C^-(t)\big)\}\geq c_0 \min\{V(K^-(t)), V(C^-(t)\setminus K^-(t))\}^{\frac{n-1}{n}}.
	\end{equation} 
	Note that since $K$ is $C$-asymptotic, we have $V(K^-(t))\rightarrow \infty$ as $t\rightarrow \infty$. Thus, we have
	\begin{equation}
		V(C^-(t)\setminus K^-(t))
	\end{equation}
	is uniformly bounded in $t$. Taking the limit in $t$ allows us to conclude that $V(C\setminus K)<\infty$. 	
	Suppose now that there are two $C$-close solutions $K_1$ and $K_2$. By Theorem \ref{thm 10.9.1} and  Figalli \cite[Corollary 2.11]{MR3617963}, we have $h_{K_1}=h_{K_2}$. 
\end{proof}
\end{theorem}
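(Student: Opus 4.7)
The plan is to transport the whole problem via the bijection $\Phi$ of \eqref{eq 10.12.1} into a zero-Dirichlet problem for the Monge-Amp\`ere equation on the bounded convex domain $\Lambda := \Phi^{-1}(\Omega)\subset \mathbb{R}^{n-1}$, apply the classical Aleksandrov theory of weak solutions, and then translate back using Theorems \ref{thm 11.26.1} and \ref{thm 10.9.1}. The only genuinely geometric work left will be to upgrade from ``$C$-asymptotic'' to ``$C$-close.''

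For existence, form $\widetilde{\mu}$ on $\Lambda$ as in \eqref{eq 10.9.2}. Because $|\langle v,e_1\rangle|\leq 1$ on $\Omega$, finiteness of $\mu$ gives $\widetilde{\mu}(\Lambda)<\infty$. The classical Aleksandrov existence theorem for weak solutions of the Monge-Amp\`ere equation on a bounded convex domain produces a convex $u\in C(\overline{\Lambda})$ with $u=0$ on $\partial \Lambda$ and $\monge_u=\widetilde{\mu}$. Theorem \ref{thm 11.26.1} then assembles a $C$-asymptotic $K\subset C$ with $S_K=\mu$ on $\Omega$.

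To upgrade $K$ to a $C$-close set, I would invoke a relative isoperimetric inequality in the finite slab $C^-(t)$. Because $C^-(t)$ is the $t$-fold dilation of $C^-(1)$, the corresponding isoperimetric constant $c_0>0$ is independent of $t$, yielding
\begin{equation*}
\mu(\Omega)=S_K(\Omega)\geq \mathcal{H}^{n-1}\bigl(\partial K\cap \Int C^-(t)\bigr)\geq c_0 \min\{V(K^-(t)),\,V(C^-(t)\setminus K^-(t))\}^{\frac{n-1}{n}}.
\end{equation*}
Since $K$ is $C$-asymptotic, $V(K^-(t))\to \infty$ as $t\to\infty$, so for $t$ sufficiently large the minimum is realized by $V(C^-(t)\setminus K^-(t))$, which is thus bounded uniformly in $t$. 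Monotone convergence then gives $V(C\setminus K)<\infty$, i.e.\ $K$ is $C$-close.

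For uniqueness, suppose $K_1,K_2\subset C$ are two $C$-close solutions. By Theorem \ref{thm 10.9.1} the functions $u_i(x):=h_{K_i}(1,x)$ lie in $C(\overline{\Lambda})$, vanish on $\partial \Lambda$, and satisfy $\monge_{u_i}=\widetilde{\mu}$ on $\Lambda$. The Aleksandrov comparison principle for weak solutions on bounded convex domains forces $u_1=u_2$, and $1$-homogeneity of the support function then gives $h_{K_1}=h_{K_2}$ on $\overline{\Omega}$, hence $K_1=K_2$. The main obstacle I anticipate is Step 2 above: the PDE side delivers only $C$-asymptoticity, and promoting to $C$-closeness is the one place where a nontrivial geometric input (the scale-invariant relative isoperimetric inequality on $C^-(t)$) is needed; everything else is a clean dictionary between $(\mu,K)$ and $(\widetilde{\mu},u)$.
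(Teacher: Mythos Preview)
Your proposal is correct and follows essentially the same route as the paper: transport to the Dirichlet problem on $\Lambda=\Phi^{-1}(\Omega)$ via \eqref{eq 10.9.2}, invoke Aleksandrov's existence/uniqueness theory for weak Monge--Amp\`ere solutions (the paper cites these as Figalli \cite[Theorem~2.13, Corollary~2.11]{MR3617963}), translate back through Theorems~\ref{thm 11.26.1} and~\ref{thm 10.9.1}, and upgrade $C$-asymptotic to $C$-close via the scale-invariant relative isoperimetric inequality on $C^-(t)$ (which the paper attributes to Ritor\'e--Vernadakis \cite[Theorem~4.11]{MR3335407}). Your identification of the $C$-close upgrade as the only genuinely geometric step is exactly right.
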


With the same strategy, we state, without proof, the uniqueness theorem for the Minkowski problem \ref{problem 1}.
\begin{theorem}\label{thm uniqueness}
	Let $\mu$ be a locally finite Borel measure on $\Omega$. There exists at most one $C$-asymptotic set $K\subset C$ such that $S_K=\mu$ on $\Omega$.
\end{theorem}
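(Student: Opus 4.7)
The plan is to reduce uniqueness for the Minkowski problem for $C$-asymptotic sets to the comparison principle for the Dirichlet problem of the Monge-Amp\`ere equation on $\Phi^{-1}(\Omega)$, mirroring the strategy used in the uniqueness part of Theorem \ref{thm 10.9.3} but upgrading it from finite to merely locally finite measures.

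Suppose $K_1, K_2 \subset C$ are two $C$-asymptotic sets with $S_{K_1} = S_{K_2} = \mu$ on $\Omega$. Set $u_i(x) = h_{K_i}(1,x)$ on $\overline{\Phi^{-1}(\Omega)}$. By Theorem \ref{thm 10.9.1}, each $u_i$ is continuous convex, vanishes on $\partial(\Phi^{-1}(\Omega))$, and satisfies $\mathcal{M}_{u_i} = \widetilde{\mu}$ on $\Phi^{-1}(\Omega)$, where $\widetilde{\mu}$ is the locally finite Borel measure on $\Phi^{-1}(\Omega)$ associated with $\mu$ via \eqref{eq 10.9.2}. The goal is to conclude $u_1 \equiv u_2$, because then their $1$-homogeneous extensions $h_{K_1}$ and $h_{K_2}$ agree on $\Omega$, and by continuity on $\overline{\Omega}$ (Theorem \ref{thm 5.20.1} combined with lower semi-continuity, giving $h_{K_i}=0$ on $\partial\Omega$) they agree on $\overline{\Omega}$, so $K_1 = K_2$.

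To obtain $u_1 = u_2$, I would invoke the Aleksandrov comparison principle for the Monge-Amp\`ere measure: if $u,v \in C(\overline{\Lambda})$ are convex on a bounded open convex $\Lambda \subset \mathbb{R}^{n-1}$, $u \geq v$ on $\partial \Lambda$, and $\mathcal{M}_u \leq \mathcal{M}_v$ on $\Lambda$, then $u \geq v$ on $\overline{\Lambda}$. Applying this twice with $\mathcal{M}_{u_1} = \mathcal{M}_{u_2} = \widetilde{\mu}$ and the common boundary value $0$ yields $u_1 = u_2$. The version stated in Figalli \cite{MR3617963} (Corollary 2.11, used in Theorem \ref{thm 10.9.3}) is phrased for finite measures, which is the essential obstacle here: in our setting $\widetilde{\mu}$ may well be infinite.

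The main obstacle is therefore to remove the finiteness hypothesis. The standard proof of Aleksandrov's comparison principle (see, e.g., Gutierrez, \emph{The Monge-Amp\`ere equation}, Theorem 1.4.6) is in fact purely local and goes through verbatim for locally finite Borel measures: it argues by contradiction, looking at the contact set of a downward parabolic perturbation on the set $\{u<v\}$ and using inclusion of subdifferentials together with the definition of $\mathcal{M}$. No integrability of $\mathcal{M}_u$ or $\mathcal{M}_v$ is used, only that they assign finite mass to compact subsets of $\Lambda$. Thus the principle applies to $u_1, u_2$ directly on $\Lambda = \Phi^{-1}(\Omega)$. Alternatively, one could argue by exhaustion: take an increasing sequence of smooth bounded convex domains $\Lambda_k \Subset \Phi^{-1}(\Omega)$ with $\Lambda_k \nearrow \Phi^{-1}(\Omega)$. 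On each $\Lambda_k$, $\widetilde{\mu}|_{\Lambda_k}$ is finite, so Figalli's Corollary 2.11 applies to $u_1$ and $u_2$ on $\Lambda_k$ with their (identical) continuous boundary values $u_1|_{\partial \Lambda_k} = u_2|_{\partial \Lambda_k}$; if one were merely equal on $\partial \Lambda_k$ up to a uniform bound one could compare via the maximum principle and let $k \to \infty$, but since we are given $u_1 = u_2$ on $\partial(\Phi^{-1}(\Omega))$ and both are continuous up to the boundary, a slightly more careful argument with any candidate difference $u_1-u_2$ extended continuously suffices. In either presentation the proof is a direct reduction, and the only genuine subtlety is verifying that the comparison principle for Monge-Amp\`ere does not require integrability of the data, which I would include as a short remark.
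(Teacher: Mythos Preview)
Your proposal is correct and matches the paper's approach exactly: the paper simply states the theorem ``with the same strategy'' as Theorem \ref{thm 10.9.3}, i.e., Theorem \ref{thm 10.9.1} followed by the comparison principle from Figalli \cite{MR3617963} (and indeed the paper already invokes Figalli's Theorem 2.10 for locally finite data elsewhere, e.g.\ in Theorem \ref{thm comparison}). One caution: your exhaustion alternative is circular as written---you cannot assume $u_1|_{\partial\Lambda_k}=u_2|_{\partial\Lambda_k}$ before proving $u_1=u_2$---but your primary argument via the Aleksandrov comparison principle (which needs only local finiteness) is sound and suffices on its own.
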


We may also establish an existence result for the Minkowski problem for $C$-asymptotic sets. This will be done in Theorem \ref{thm comparison}. We remark that working in the background is the comparison principle of Monge-Amp\`{e}re equations. Note that since it is hard to verify the condition for a given $\mu$, it does \emph{not} constitute a full solution to the Minkowski problem.

For a Borel measure $\mu$ on $\Omega$ and a measurable subset $E\subset \Omega$, we will often use $\mu \mres E$ to denote the Borel measure that satisfies 
\begin{equation}
	\mu \mres E (\omega)=\mu (\omega\cap E), 
\end{equation}
for every Borel subset $\omega\subset \Omega$.

We need the following lemma due to Schneider. Recall the definition of $\omega_\alpha$ in \eqref{eq 11.24.3}.
\begin{lemma}\label{lemma 10.11.2}
	Let $\mu$ be a locally finite Borel measure on $\Omega$ and $K_j$ be a sequence of $C$-pseudo sets that converges to a $C$-pseudo set $K$. If for each $j\geq k$, we have
	\begin{equation}
		S_{K_j}\mres \overline{\omega_{1/k}} =\mu\mres \overline{\omega_{1/k}},
	\end{equation}
	then $S_K=\mu$ on $\Omega$.
\end{lemma}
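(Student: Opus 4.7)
The plan is to reduce the problem to the classical weak continuity of surface area measures for compact convex bodies. Fix $k \geq 1$; if I can show $S_K \mres \omega_{1/k} = \mu \mres \omega_{1/k}$, then since $\Omega = \bigcup_{k\geq 1} \omega_{1/k}$, taking $k \to \infty$ yields $S_K = \mu$ on all of $\Omega$.

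First I would extract the necessary uniform bounds. By the definition of convergence $K_j \to K$, there exists $t_0 > 0$ such that $K_j^-(t_0) \neq \emps$ for all $j$ and $K_j^-(t_0) \to K^-(t_0)$ in Hausdorff distance; Hausdorff convergence forces these bodies to be uniformly bounded, so $\dist(o, K_j)$ is bounded uniformly in $j$. Applying Lemma \ref{lemma 10.12.1} with this uniform bound and $c_2 = 1/k$ produces a constant $t_1 = t_1(k) \geq t_0$ such that
\begin{equation*}
\tau_{K_j}(\overline{\omega_{1/k}}) \subset K_j^-(t_1) \quad \text{for all } j, \quad \text{and} \quad \tau_K(\overline{\omega_{1/k}}) \subset K^-(t_1).
\end{equation*}

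Pick any $t \geq t_1$. Then $K_j^-(t)$ is a sequence of compact convex bodies converging to $K^-(t)$ in Hausdorff distance, so $S_{K_j^-(t)} \to S_{K^-(t)}$ weakly on $\sn$ by the classical continuity of the surface area measure. Next I would observe that $\partial K_j^-(t)$ decomposes into the lateral piece $\partial K_j \cap H^-(t)$ and the flat cap $K_j \cap H(t)$, joined along an $(n-2)$-dimensional edge. The cap has outer normal exactly $u_* \in \Int C$, which is separated a positive spherical distance from $\overline{\Omega} \supset \overline{\omega_{1/k}}$, and the edge has $\mathcal{H}^{n-1}$-measure zero. Combined with the inclusion $\tau_{K_j}(\overline{\omega_{1/k}}) \subset H^-(t)$ guaranteed by the choice of $t$, this gives
\begin{equation*}
S_{K_j^-(t)} \mres \overline{\omega_{1/k}} = S_{K_j} \mres \overline{\omega_{1/k}},
\end{equation*}
together with the analogous identity for $K$.

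For any $\phi \in C_c(\omega_{1/k})$, the hypothesis and this identification give $\int \phi \, dS_{K_j^-(t)} = \int \phi \, dS_{K_j} = \int \phi \, d\mu$ whenever $j \geq k$; passing $j \to \infty$ via weak convergence, and then using the corresponding identity for $K$, yields $\int \phi \, dS_K = \int \phi \, d\mu$. Since continuous compactly supported functions determine a locally finite Borel measure on the open set $\omega_{1/k}$, we conclude $S_K \mres \omega_{1/k} = \mu \mres \omega_{1/k}$, and letting $k \to \infty$ completes the proof. The main technical point is securing a uniform-in-$j$ truncation level $t$ under which $S_{K_j^-(t)}$ and $S_{K_j}$ coincide on $\overline{\omega_{1/k}}$; this is exactly what Lemma \ref{lemma 10.12.1} is designed to provide once $\dist(o, K_j)$ is controlled, which is in turn free from the notion of convergence adopted in the paper.
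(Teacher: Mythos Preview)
Your proof is correct and follows essentially the same route as the paper: obtain a uniform bound on $\dist(o,K_j)$ from the convergence, invoke Lemma \ref{lemma 10.12.1} to find a truncation level $t$ (independent of $j$) for which $S_{K_j^-(t)}$ and $S_{K_j}$ agree on $\overline{\omega_{1/k}}$, and then pass to the limit using the classical weak continuity of surface area measures for compact bodies. Your write-up is in fact a touch more careful than the paper's, which asserts equality of the restricted measures on the closed set $\overline{\omega_{1/k}}$ directly from weak convergence; your use of $C_c(\omega_{1/k})$ test functions and the conclusion on the open $\omega_{1/k}$ sidesteps any issue with boundary mass.
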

\begin{proof}
	Since $K_j\rightarrow K$, the distance $\dist(o,\partial K_j)$ is uniformly bounded.  By Lemma \ref{lemma 10.12.1}, for each sufficiently large $k\in \mathbb{N}$, there exists $t_k>0$ such that 
\begin{equation}
	\tau_{K_i}(\overline{\omega_{1/k}}), \tau_{K}(\overline{\omega_{1/k}})\subset C^-(t_k).
\end{equation}
Hence, for each $j\geq k$, we have 
\begin{equation}
	\mu\,\mres \overline{\omega_{1/k}} = S_{K_j}\, \mres \overline{\omega_{1/k}}=S_{K_j^{-}(t_k)} \, \mres \overline{\omega_{1/k}}.
\end{equation}
Since $K_j^{-}(t_k)$ converges to $K^-(t_k)$ as $j\rightarrow \infty$, we conclude that 
\begin{equation}
	\mu\,\mres \overline{\omega_{1/k}}=S_{K^{-}(t_k)} \, \mres \overline{\omega_{1/k}}=S_{K} \, \mres \overline{\omega_{1/k}}.
\end{equation}
Since $\cup_{k=1}^\infty \overline{\omega_{1/k}}=\Omega$, we conclude that $\mu =S_K$ on $\Omega$.  
\end{proof}

\begin{theorem}\label{thm comparison}
	Let $\mu$ be a locally finite Borel measure on $\Omega$. If there exists a $C$-asymptotic set $L\subset C$ such that $\mu(\eta)\leq S_L(\eta)$ for every Borel subset $\eta\subset \Omega$,
	then there exists a $C$-asymptotic set $K\subset C$ such that $\mu = S_K$ on $\Omega$. Moreover, $K\supset L$.  
\end{theorem}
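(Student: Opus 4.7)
My approach is to construct $K$ as a decreasing limit of $C$-close sets solving finite truncations of the Minkowski problem, using $L$ as a uniform lower barrier. For each $k\geq 1$ let $\mu_k=\mu\mres\overline{\omega_{1/k}}$. Since $\mu$ is locally finite on $\Omega$ and $\overline{\omega_{1/k}}$ is a compact subset of $\Omega$, each $\mu_k$ is a finite Borel measure, so Theorem \ref{thm 10.9.3} yields a unique $C$-close set $K_k\subset C$ with $S_{K_k}=\mu_k$. Writing $u_k(x)=h_{K_k}(1,x)$ and $u_L(x)=h_L(1,x)$, Theorem \ref{thm 10.9.1} identifies $u_k,u_L\in C(\overline{\Phi^{-1}(\Omega)})$ as convex solutions of the Dirichlet problems $\monge_{u_k}=\widetilde{\mu_k}$ and $\monge_{u_L}=\widetilde{S_L}$ with zero boundary values on $\partial\Phi^{-1}(\Omega)$.

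Because $\mu_k\leq\mu_{k+1}\leq\mu\leq S_L$, the Monge-Amp\`ere comparison principle (the engine behind Theorem \ref{thm uniqueness}, via Figalli \cite{MR3617963}) gives $u_L\leq u_{k+1}\leq u_k\leq 0$, which through \eqref{eq 10.9.1} and the $1$-homogeneity of support functions translates into the nested inclusions $L\subset K_{k+1}\subset K_k\subset C$. I then define $K=\bigcap_{k}K_k$. The intersection is nonempty and closed convex in $C$; the pseudo-cone and recession-cone properties pass to intersections, so $K$ is $C$-pseudo, and $K\supset L$ forces the squeeze $0=h_L\leq h_K\leq h_C=0$ on $\partial\Omega$, making $K$ $C$-asymptotic by Theorem \ref{thm 5.20.1}. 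To identify $S_K$ with $\mu$, I fix $t_0>0$ so that $L^-(t_0)$ is a nonempty convex body in $C^-(t_0)$; then for each $t\geq t_0$ the sequence $K_k^-(t)$ is a decreasing sequence of nonempty convex bodies in $C^-(t)$ and therefore converges in Hausdorff metric to $\bigcap_k K_k^-(t)=K^-(t)$. This is precisely the convergence $K_k\to K$ in the sense of $C$-pseudo sets. Since $\overline{\omega_{1/k}}\subset\overline{\omega_{1/j}}$ for $j\geq k$, we also have $S_{K_j}\mres\overline{\omega_{1/k}}=\mu_j\mres\overline{\omega_{1/k}}=\mu\mres\overline{\omega_{1/k}}$, so Lemma \ref{lemma 10.11.2} delivers $S_K=\mu$ on $\Omega$.

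The most delicate step is orienting the comparison principle correctly so that the \emph{increasing} sequence of measures $\mu_k$ yields a \emph{decreasing} nested family of $C$-close solutions, and then exploiting the hypothesis $\mu\leq S_L$ to secure a uniform lower barrier $L\subset K_k$. Without such a barrier the intersection could degenerate or lose its $C$-asymptotic character, and the hypothesis of Lemma \ref{lemma 10.11.2}, namely convergence of the $K_k$ to a $C$-pseudo set with well-defined slices at large heights, could fail. Once monotonicity and the barrier are in place, the $C$-pseudo structure of $K$, its asymptotic behavior, and the identification $S_K=\mu$ follow mechanically from Hausdorff convergence of decreasing compact convex sets together with Lemma \ref{lemma 10.11.2}.
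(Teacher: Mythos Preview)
Your proof is correct and follows essentially the same strategy as the paper: truncate $\mu$ to finite measures $\mu_k=\mu\mres\overline{\omega_{1/k}}$, solve via Theorem \ref{thm 10.9.3}, apply the comparison principle against $S_L$ to obtain $L\subset K_k$, pass to a limit, and invoke Lemma \ref{lemma 10.11.2}. The only difference is that you additionally exploit the monotonicity $\mu_k\leq\mu_{k+1}$ to get a nested sequence $K_{k+1}\subset K_k$ and take $K=\bigcap_k K_k$ directly, whereas the paper simply uses the uniform bound on $\dist(o,K_i)$ furnished by $L\subset K_i$ together with Lemma \ref{lemma 10.13.2} to extract a convergent subsequence; this is a minor and slightly cleaner variation rather than a genuinely different route.
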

\begin{proof}
		For each $i\in \mathbb{N}$ sufficiently large, let $\mu_i = \mu\mres \overline{\omega_{1/i}}$. Note that since $\mu$ is locally finite, the measures $\mu_i$ are finite. Therefore, by Theorem \ref{thm 10.9.3}, there exist $C$-close sets $K_i$ such that $S_{K_i}=\mu_i$ on $\Omega$. Note that $h_{K_i}=0$ on $\partial \Omega$. By Theorem \ref{thm 10.9.1}, the fact that $\mu_i(\eta)\leq \mu(\eta)\leq S_L(\eta)$ for every Borel set $\eta\subset \Omega$, and Figalli \cite[Theorem 2.10]{MR3617963}, we have
	\begin{equation}
	\label{eq 10.9.9}
		h_{K_i}\geq h_L, \text{ on }\overline{\Omega},
	\end{equation}
	or, equivalently, $L\subset K_i$.
	This implies that $\dist(o, \partial K_i)$ is uniformly bounded, which, by Lemma \ref{lemma 10.13.2}, implies the existence of a subsequence (denoted again by $K_i$) such that $K_i$ converges to a $C$-pseudo cone $K$. By Lemma \ref{lemma 10.11.2}, we conclude that $\mu=S_K$ on $\Omega$.

The fact that $K\supset L$ follows from that $K_i\supset L$ and that $K_i$ converges to $K$. Since $L$ is $C$-asymptotic, this further implies that $K$ is $C$-asymptotic.
\end{proof}

A surprising consequence of Theorem \ref{thm comparison} is that it allows us to define the \emph{Blaschke sum} between two $C$-asymptotic sets. 

We require the following basic lemma.
\begin{lemma}\label{lemma 10.11.5}
	Let $K, L$ be two $C$-pseudo cones. Then their Minkowski sum
	\begin{equation}
		K+L=\{x+y: x\in K, y\in L\}
	\end{equation}
	is a $C$-pseudo cone. Moreover,
	\begin{equation}
		h_{K+L} = h_K+h_L, \text{ on } \overline{\Omega}.
	\end{equation}
\end{lemma}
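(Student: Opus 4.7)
The plan is to verify the defining properties of a $C$-pseudo cone for $K+L$ one at a time, using pointedness of $C$ at the single nontrivial step (closedness of the sum), and then to derive the support function identity from a direct supremum computation. Being a pseudo cone is preserved under Minkowski sum essentially tautologically, so the whole argument should be short.

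First, $K+L$ is clearly nonempty and convex. The inclusion $K+L\subset C$ follows from $K,L\subset C$ and $C+C\subset C$. The pseudo cone property is direct: for $z=x+y\in K+L$ and $\lambda\geq 1$, we have $\lambda x\in K$ and $\lambda y\in L$ by the pseudo cone property applied to $K$ and $L$ separately, hence $\lambda z\in K+L$. Closedness is the step that uses a genuine property of $C$: the classical criterion says that $K+L$ is closed whenever $(\rec K)\cap(-\rec L)=\{o\}$, and here this reduces to $C\cap(-C)=\{o\}$, which holds because $C$ is pointed. To identify the recession cone, I would show both inclusions: for $c\in C$ and $z=x+y\in K+L$, the containment $C\subset \rec K$ gives $x+\lambda c\in K$ for all $\lambda\geq 0$, so $z+\lambda c\in K+L$, proving $C\subset \rec(K+L)$; the reverse inclusion $\rec(K+L)\subset C$ follows from $K+L\subset C$, since the recession cone of any subset of $C$ lies in $\rec C=C$. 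This shows that $K+L$ is $C$-pseudo.

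For the support function identity, note that for $v\in \overline{\Omega}$ both $h_K(v)$ and $h_L(v)$ are finite: they are bounded above by $0$ since $K,L\subset C$ and $v\in C^\circ$, and bounded below by $\langle x_0,v\rangle>-\infty$ for any fixed $x_0$ in the corresponding set. The supremum then splits over independent variables:
\begin{equation}
h_{K+L}(v)=\sup_{x\in K,\, y\in L}\bigl(\langle x,v\rangle+\langle y,v\rangle\bigr)=\sup_{x\in K}\langle x,v\rangle+\sup_{y\in L}\langle y,v\rangle=h_K(v)+h_L(v).
\end{equation}

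The main point requiring care is closedness: one must remember that Minkowski sums of closed sets can fail to be closed, and invoke pointedness of $C$ to rule out escape to infinity along opposite recession directions. Beyond that, everything is an immediate unpacking of the definitions.
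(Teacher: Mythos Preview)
Your proof is correct and follows the same overall outline as the paper: verify nonemptiness, convexity, closedness, the pseudo-cone property, the recession cone, and then the support function identity. The one substantive difference is the closedness step. You invoke the classical recession-cone criterion $(\rec K)\cap(-\rec L)=\{o\}$, which reduces to $C\cap(-C)=\{o\}$ by pointedness; the paper instead gives a direct sequential argument, using the vector $u_*\in\Int C$ to bound $\langle x_i,u_*\rangle$ and $\langle y_i,u_*\rangle$ separately and extract convergent subsequences. Your route is cleaner and more general, while the paper's is self-contained and makes the role of pointedness explicit without appealing to an external result. The remaining steps (pseudo-cone property, $\rec(K+L)=C$, and the support function computation) are essentially identical to the paper's, with your direct splitting of the supremum amounting to the same thing as the paper's two-sided inequality via approximating sequences.
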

\begin{proof}
	We first show that $K+L$ is closed. Suppose $z_i\in K+L$ and $z_i$ converges to some $z_0\in \rn$. By definition of $K+L$, we can write $z_i=x_i+y_i$ for some $x_i\in K$ and $y_i\in L$. Since $z_i$ converges to $z_0$, the sequence $\langle z_i, u_*\rangle$ is uniformly bounded from above. Since $\langle z_i, u_*\rangle =\langle x_i, u_*\rangle+\langle y_i, u_*\rangle$ and $\langle u_*, x\rangle\geq 0$ for every $x\in C$, we conclude that both $\langle x_i, u_*\rangle$ and $\langle y_i, u_*\rangle$ are uniformly bounded. This, when combined with the fact that $x_i, y_i\in C$, shows that both $x_i$ and $y_i$ are uniformly bounded. Hence, by possibly taking a subsequence and using the closedness of $K$ and $L$, we conclude that $x_i\rightarrow x_0\in K$ and $y_i\rightarrow y_0\in L$. Therefore $z_0=x_0+y_0\in K+L$. 
	
	To further see that $K+L$ is a pseudo cone, take any $z=x+y\in K+L$ and $\lambda\geq 1$. Since $K$ and $L$ are pseudo cones, we have $\lambda z=\lambda x+\lambda y\in K+L$. 
	
	Note also that $(K+L)+C\subset (K+C)+(L+C)\subset K+L$. This and the fact that $K+L\subset C+C=C$ imply that $K+L$ is $C$-pseudo.
	
	We now show that $h_{K+L}=h_K+h_L$. Note that by definition of the support function, we have $h_{K+L}\leq h_{K}+h_L$. We now show the other direction of the inequality.  Indeed, let $v_0\in \overline{\Omega}$. By the definition of support function, there exist $x_i\in K$ and $y_i\in L$ such that $\langle x_i, v_0\rangle\rightarrow h_K(v_0)$ and $\langle y_i,v_0\rangle \rightarrow h_L(v_0)$. Note that $x_i+y_i\in K+L$. Therefore $h_{K+L}(v_0)\geq h_{K}(v_0)+h_L(v_0)$.
\end{proof}

The following result immediately follows from Lemma \ref{lemma 10.11.5} and Theorem \ref{thm 5.20.1}.
\begin{coro}\label{coro 11.26.1}
	Let $K, L$ be two $C$-asympotic cones. Then the set $K+L$ is $C$-asymptotic.
\end{coro}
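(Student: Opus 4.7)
The plan is to combine Lemma \ref{lemma 10.11.5} with the characterization of $C$-asymptotic sets given by Theorem \ref{thm 5.20.1}. Specifically, Theorem \ref{thm 5.20.1} reduces the property of being $C$-asymptotic to a boundary condition on the support function, namely that it vanishes on $\partial \Omega$, while Lemma \ref{lemma 10.11.5} guarantees that the Minkowski sum $K+L$ is itself a $C$-pseudo cone and provides the additivity of support functions on $\overline{\Omega}$.

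Concretely, I would proceed as follows. First, apply Lemma \ref{lemma 10.11.5} to the two $C$-pseudo cones $K$ and $L$ to conclude that $K+L$ is a $C$-pseudo cone and that
\begin{equation}
    h_{K+L}(v) = h_K(v) + h_L(v), \qquad v \in \overline{\Omega}.
\end{equation}
Next, since $K$ and $L$ are assumed $C$-asymptotic, by the implication $(1) \Rightarrow (2)$ in Theorem \ref{thm 5.20.1}, we have $h_K \equiv 0$ and $h_L \equiv 0$ on $\partial \Omega$. Substituting into the displayed identity gives $h_{K+L} \equiv 0$ on $\partial \Omega$. Finally, applying the reverse implication $(2) \Rightarrow (1)$ of Theorem \ref{thm 5.20.1} to the $C$-pseudo cone $K+L$ yields that $K+L$ is $C$-asymptotic.

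There is essentially no obstacle here; the argument is a direct two-line assembly of the previously established facts. The genuine content has already been put in place, first in Lemma \ref{lemma 10.11.5}, which required showing that $K+L$ is closed and has recession cone $C$, and second in Theorem \ref{thm 5.20.1}, whose nontrivial direction relied on the Hausdorff-distance estimate of Lemma \ref{lemma 5.20.2}. Given those tools, the corollary reduces to observing that the sum of two functions vanishing on $\partial \Omega$ still vanishes on $\partial \Omega$.
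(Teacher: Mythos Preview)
Your proposal is correct and matches the paper's approach exactly: the paper states that the corollary ``immediately follows from Lemma \ref{lemma 10.11.5} and Theorem \ref{thm 5.20.1},'' which is precisely the two-step assembly you describe.
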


The following lemma stems from the Brunn-Minkowski inequality for positive semi-definite matrices.

\mycomment{\color{red}{Probably from mixed surface area as well. Idk if we really need to use PDE maps here.}}
\begin{lemma}\label{lemma 10.11.6}
	Let $K, L$ be two $C$-asympotic cones. Then for every Borel subset $\eta\subset \Omega$, we have $S_{K+L}(\eta)\geq S_K(\eta)+S_L(\eta)$.
\end{lemma}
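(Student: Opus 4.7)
By Corollary \ref{coro 11.26.1}, $K+L$ is $C$-asymptotic. Setting $u(x)=h_K(1,x)$ and $v(x)=h_L(1,x)$, which are convex on $\overline{\Phi^{-1}(\Omega)}$, Lemma \ref{lemma 10.11.5} gives $h_{K+L}(1,x)=u(x)+v(x)$, so the convex function associated with $K+L$ via \eqref{eq 10.9.1} is precisely $u+v$. Via Lemma \ref{lemma 10.9.2}, for each Borel $\eta\subset\Omega$,
$$\monge_u(\Phi^{-1}(\eta))=\int_{\eta}|\langle w,e_1\rangle|\,dS_K(w),$$
and similarly for $v$, $L$ and $u+v$, $K+L$. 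Since $\Omega\subset\{w:\langle w,e_1\rangle>\xi_{C,u_*}\}$ (recall $u_*=-e_1$), the weight $|\langle\cdot,e_1\rangle|$ is bounded below by a positive constant on $\Omega$.

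\textbf{Reduction to a Monge-Amp\`ere superadditivity.} The plan is to prove the weak superadditivity
$$\monge_{u+v}(E)\geq\monge_u(E)+\monge_v(E)$$
for every Borel $E\subset\Phi^{-1}(\Omega)$, and then translate back. Granting this, Lemma \ref{lemma 10.9.2} applied with $E=\Phi^{-1}(\eta)$ yields
$$\int_{\eta}|\langle w,e_1\rangle|\,dS_{K+L}(w)\geq\int_{\eta}|\langle w,e_1\rangle|\,dS_K(w)+\int_{\eta}|\langle w,e_1\rangle|\,dS_L(w).$$
Since this weighted inequality holds for every Borel $\eta\subset\Omega$ and the weight is strictly positive on $\Omega$, the signed measure $S_{K+L}-S_K-S_L$ is nonnegative on every Borel subset of $\Omega$, giving the claim of the lemma.

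\textbf{Proof of the Monge-Amp\`ere inequality.} The core of the argument is a subdifferential inclusion: if $p\in\partial u(x)$ and $q\in\partial v(x)$, adding the two defining inequalities shows $p+q\in\partial(u+v)(x)$. Consequently
$$\partial u(E)+\partial v(E)\subset\partial(u+v)(E).$$
Next, apply the Brunn-Minkowski inequality in $\mathbb{R}^{n-1}$ to the Lebesgue-measurable sets $\partial u(E)$ and $\partial v(E)$:
$$\bigl|\partial u(E)+\partial v(E)\bigr|^{\frac{1}{n-1}}\geq\bigl|\partial u(E)\bigr|^{\frac{1}{n-1}}+\bigl|\partial v(E)\bigr|^{\frac{1}{n-1}}.$$
The elementary bound $(a+b)^{n-1}\geq a^{n-1}+b^{n-1}$ for $a,b\geq 0$ (valid for $n\geq 2$) upgrades this to $|\partial u(E)+\partial v(E)|\geq|\partial u(E)|+|\partial v(E)|$. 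Combining with the inclusion yields
$$\monge_{u+v}(E)=|\partial(u+v)(E)|\geq|\partial u(E)+\partial v(E)|\geq|\partial u(E)|+|\partial v(E)|=\monge_u(E)+\monge_v(E),$$
which is what we need.

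\textbf{Main obstacle.} The only non-cosmetic technical point is the Lebesgue measurability of $\partial u(E)$, $\partial v(E)$, and of their Minkowski sum, so that the Brunn-Minkowski inequality applies cleanly. This is a standard item in the Monge-Amp\`ere literature (see e.g.\ Figalli's book): it ultimately rests on the fact that a convex function is differentiable off a Lebesgue-null set, so subdifferential images coincide up to null sets with images under the single-valued gradient map. One may also reduce first to compact $E$ (where $\partial u(E)$ is closed, hence Borel) and extend by inner regularity of Lebesgue measure. Once these measurability facts are in hand, the rest of the argument is essentially the Brunn--Minkowski inequality for determinants packaged in the language of Monge-Amp\`ere measures.
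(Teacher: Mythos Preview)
Your overall strategy---pass to the convex functions $u=h_K(1,\cdot)$, $v=h_L(1,\cdot)$ via \eqref{eq 10.9.1}, reduce the claim to the superadditivity $\monge_{u+v}\geq \monge_u+\monge_v$ of Monge--Amp\`ere measures, and then translate back through Lemma~\ref{lemma 10.9.2}---is exactly the paper's. The paper simply cites Figalli \cite[Lemma 2.9]{MR3617963} for the superadditivity and then integrates the positive weight $\sqrt{1+|x|^2}$ directly, which is slightly more streamlined than your weighted-inequality-then-strip-the-weight detour (though your detour is correct).

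The genuine gap is in your attempted proof of the Monge--Amp\`ere superadditivity. From the pointwise inclusion $\partial u(x)+\partial v(x)\subset \partial(u+v)(x)$ you deduce
\[
\partial u(E)+\partial v(E)\subset \partial(u+v)(E),
\]
but this step is false: the pointwise inclusion only yields $\bigcup_{x\in E}\bigl(\partial u(x)+\partial v(x)\bigr)\subset \partial(u+v)(E)$, and the left-hand union is in general \emph{strictly smaller} than the Minkowski sum $\partial u(E)+\partial v(E)$. A concrete counterexample on $\mathbb{R}$: take $u(x)=v(x)=x^2/2$ and $E=[0,1]\cup[2,3]$. Then $\partial u(E)=\partial v(E)=[0,1]\cup[2,3]$, so $\partial u(E)+\partial v(E)=[0,6]$, whereas $\partial(u+v)(E)=2E=[0,2]\cup[4,6]$; in particular $3$ lies in the Minkowski sum but not in $\partial(u+v)(E)$. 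Worse, $|\partial u(E)+\partial v(E)|=6>4=|\partial(u+v)(E)|$, so your chain of inequalities would give the false bound $4\geq 6$. (Note the desired inequality $\monge_{u+v}(E)\geq\monge_u(E)+\monge_v(E)$ is still true here, $4\geq 2+2$; it is only your route to it that fails.)

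A correct direct argument for $\monge_{u+v}\geq \monge_u+\monge_v$ does not go through this set inclusion. One standard route is to use Minkowski's determinant inequality $\det(A+B)\geq \det A+\det B$ for positive semidefinite matrices (which is what the paper alludes to in the sentence preceding the lemma), apply it pointwise to $D^2u$ and $D^2v$ in the smooth case, and then pass to general convex $u,v$ via approximation and weak-$*$ stability of the Monge--Amp\`ere measure. Alternatively, and as the paper does, simply invoke \cite[Lemma 2.9]{MR3617963}.
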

\begin{proof}
Let $\eta\subset \Omega$ be an arbitrary Borel subset. Recall the bijection $\Phi$ defined as in \eqref{eq 10.12.1}. Define $u_K(x)=h_K(1,x)$, $u_L=h_L(1,x)$, and $u_{K+L}(x)=h_{K+L}(1,x)$ for all $x\in \overline{\Phi^{-1}(\Omega)}$. By Lemma \ref{lemma 10.11.5}, we have $u_{K+L}=u_K+u_L$. 
	By Lemma \ref{lemma 10.9.2}, we have
	\begin{equation}\label{eq 10.12.2}
		\begin{aligned}
			S_{K}(\eta)&= \int_{\Phi^{-1}(\eta)} \sqrt{1+|x|^2}d\monge_{u_K}(x),\\
			S_{L}(\eta)&= \int_{\Phi^{-1}(\eta)} \sqrt{1+|x|^2}d\monge_{u_L}(x),\\
			S_{K+L}(\eta)&= \int_{\Phi^{-1}(\eta)} \sqrt{1+|x|^2}d\monge_{u_{K+L}}(x)=\int_{\Phi^{-1}(\eta)} \sqrt{1+|x|^2}d\monge_{u_{K}+u_{L}}(x).
		\end{aligned}
	\end{equation}
	By Figalli \cite[Lemma 2.9]{MR3617963}, we have $\monge_{u_K+u_L}(\widetilde{\eta})\geq \monge_{u_K}(\widetilde{\eta})+\monge_{u_L}(\widetilde{\eta})$ for any Borel subset $\widetilde{\eta}\subset \overline{\Phi^{-1}(\Omega)}$. This and \eqref{eq 10.12.2} immediately imply $S_{K+L}(\eta)\geq S_K(\eta)+S_L(\eta)$.
\end{proof}

\begin{theorem}
\label{thm blaschke}
	Let $K, L$ be two $C$-asympotic cones. There exists a unique $C$-asymptotic set $Q$ such that 
	\begin{equation}
		S_Q=S_K+S_L, \text{ on } \Omega.
	\end{equation}
\end{theorem}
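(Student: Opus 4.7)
\medskip

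\noindent\textbf{Proof proposal.} The plan is to combine the Minkowski-sum/Blaschke-sum inequality (Lemma \ref{lemma 10.11.6}) with the comparison-principle existence result (Theorem \ref{thm comparison}) and the uniqueness result (Theorem \ref{thm uniqueness}). All the ingredients are already in place, so the proof should be essentially a bookkeeping exercise.

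First I would set $\mu = S_K + S_L$ on $\Omega$. Since $K$ and $L$ are $C$-asymptotic (hence $C$-pseudo), Corollary \ref{coro 11.20.1} gives that both $S_K$ and $S_L$ are locally finite on $\Omega$, and therefore so is $\mu$. Next, by Corollary \ref{coro 11.26.1}, the Minkowski sum $K+L$ is again a $C$-asymptotic set contained in $C$. Lemma \ref{lemma 10.11.6} then provides the key domination inequality
\begin{equation}
\mu(\eta) = S_K(\eta) + S_L(\eta) \le S_{K+L}(\eta)
\end{equation}
for every Borel subset $\eta \subset \Omega$, so $K+L$ plays the role of the majorant set required by Theorem \ref{thm comparison}.

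Applying Theorem \ref{thm comparison} with this $\mu$ and with $L$ there taken to be $K+L$ produces a $C$-asymptotic set $Q \subset C$ (in fact satisfying $Q \supset K+L$) such that $S_Q = \mu = S_K + S_L$ on $\Omega$. This gives existence. Uniqueness of $Q$ among $C$-asymptotic sets is immediate from Theorem \ref{thm uniqueness} applied to the locally finite measure $\mu$.

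No step here looks technically difficult: the heavy lifting was done in Lemma \ref{lemma 10.11.6} (via the Brunn--Minkowski inequality for Monge--Amp\`ere measures of sums of convex functions) and in Theorem \ref{thm comparison} (via the PDE comparison principle used to extract the limit of the approximating $C$-close solutions). If there is a subtle point at all, it is just making sure that $\mu = S_K + S_L$ genuinely satisfies the hypotheses of Theorem \ref{thm comparison}, namely local finiteness on $\Omega$ and dominance by the surface area measure of a $C$-asymptotic set, both of which follow from the preceding lemmas as noted above.
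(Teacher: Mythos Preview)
Your proposal is correct and matches the paper's proof essentially step for step: set $\mu = S_K + S_L$, use Corollary \ref{coro 11.26.1} and Lemma \ref{lemma 10.11.6} to obtain the $C$-asymptotic majorant $K+L$, apply Theorem \ref{thm comparison} for existence, and Theorem \ref{thm uniqueness} for uniqueness. Your write-up is slightly more explicit about local finiteness (via Corollary \ref{coro 11.20.1}) and the containment $Q\supset K+L$, but the argument is the same.
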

\begin{proof}
	Let $\mu = S_K+S_L$. Note that $\mu$ is a locally finite Borel measure on $\Omega$. By Lemma \ref{lemma 10.11.6} and Corollary \ref{coro 11.26.1}, there exists a $C$-asymptotic set $K+L$ such that $S_{K+L}(\eta)\geq \mu(\eta)$ for every Borel subset $\eta\subset \Omega$. By Theorem \ref{thm comparison}, there exists a $C$-asymptotic set $Q$ such that $S_Q=\mu = S_K+S_L$. The uniqueness of the solution comes from Theorem \ref{thm uniqueness}. 
\end{proof}

Theorem \ref{thm blaschke} allows us to define the Blaschke sum within the class of $C$-asymptotic sets.
\begin{defi}[Blaschke sum]
	Let $K, L$ be two $C$-asympotic sets. The Blaschke sum, $K\#L$, is the unique $C$-asymptotic set that satisfies $S_{K\#L}=S_K+S_L$.
\end{defi}

\section{The Minkowski problem for $C$-asymptotic sets in dimension 2}\label{section dim 2}

In this section, we focus on dimension 2 and present the proof for Theorem \ref{thm main}.

Recall that with a proper choice of the coordinate system, we assume \eqref{eq 10.3.7} holds. Consequently,  we have 
\begin{equation}
	\omega_\alpha =\{(\cos \theta, \sin\theta): \theta\in (-\beta_0+\alpha, \beta_0-\alpha)\}.
\end{equation}

We will frequently identify a Borel measure $\mu$ on $\Omega$ with a Borel measure on $(-\beta_0, \beta_0)$ and denote the latter as $\mu$ as well. Note that it is simple to see, from the definition of $\omega_\alpha$ that 
\begin{equation}
	\int_{0}^\frac{\pi}{2}\mu(\omega_{\alpha})d\alpha = \int_{0}^{\beta_0}\mu(\omega_{\alpha})d\alpha.
\end{equation}

We will need to use the following technical lemma.
\begin{lemma}\label{lemma 10.3.1}
	Let $\mu$ be a locally finite Borel measure on $(-\beta_0, \beta_0)$ for some $\beta_0\in (0,\pi/2)$. Then, we have 
	\begin{equation}
	\label{eq 10.3.1}
		\int_{0}^{\beta_0} \mu\Big((-\beta_0+\alpha, \beta_0-\alpha)\Big) d\alpha<\infty
	\end{equation}
	if and only if
	\begin{equation}
	\label{eq 10.3.2}
		\int_{(-\beta_0,\beta_0)} \sin(\beta_0-|\alpha|)d\mu(\alpha)<\infty.
	\end{equation}
\end{lemma}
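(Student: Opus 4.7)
The plan is to recognize this lemma as a straightforward Fubini-type comparison once the inner integrand is rewritten correctly, combined with an elementary sine/argument comparison that is uniform because $\beta_0<\pi/2$.

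First, I would observe that for $s\in(0,\beta_0)$ and $\alpha\in(-\beta_0,\beta_0)$, the containment $\alpha\in(-\beta_0+s,\beta_0-s)$ is equivalent to $s<\beta_0-|\alpha|$. Since $\mu$ is locally finite on $(-\beta_0,\beta_0)$, the sets $(-\beta_0+s,\beta_0-s)$ have compact closure inside $(-\beta_0,\beta_0)$ for each $s>0$, so $\mu((-\beta_0+s,\beta_0-s))$ is finite; moreover, as a function of $s$ it is monotone decreasing, hence Borel measurable. This justifies applying Tonelli's theorem to obtain
\begin{equation}
\int_{0}^{\beta_0}\mu\!\left((-\beta_0+s,\beta_0-s)\right)ds=\int_{(-\beta_0,\beta_0)}(\beta_0-|\alpha|)\,d\mu(\alpha).
\end{equation}
Thus the condition \eqref{eq 10.3.1} is equivalent to $\int (\beta_0-|\alpha|)\,d\mu(\alpha)<\infty$.

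Next, I would compare $\beta_0-|\alpha|$ with $\sin(\beta_0-|\alpha|)$. Since $\beta_0\in(0,\pi/2)$, the value $\beta_0-|\alpha|$ lies in $[0,\beta_0]\subset[0,\pi/2)$. On this interval the function $x\mapsto \sin x/x$ is continuous (with removable singularity at $0$, value $1$) and strictly decreasing, so there exist constants
\begin{equation}
0<c_1:=\frac{\sin\beta_0}{\beta_0}\leq \frac{\sin x}{x}\leq 1=:c_2,\qquad x\in(0,\beta_0].
\end{equation}
Hence $c_1(\beta_0-|\alpha|)\leq \sin(\beta_0-|\alpha|)\leq c_2(\beta_0-|\alpha|)$, which gives the pointwise comparability of the two integrands and shows that $\int(\beta_0-|\alpha|)\,d\mu(\alpha)<\infty$ if and only if \eqref{eq 10.3.2} holds. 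Chaining this with the Fubini identity above completes the proof.

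There is no real obstacle here; this is a bookkeeping lemma. The only mild care needed is to ensure the Fubini step is legitimate despite $\mu$ possibly being infinite, which is handled by Tonelli (the integrand is nonnegative) and by the local finiteness of $\mu$ on $(-\beta_0,\beta_0)$ guaranteeing finiteness of each slice $\mu((-\beta_0+s,\beta_0-s))$ for $s>0$. The endpoint $s=0$ is a single point and contributes nothing to the $s$-integral.
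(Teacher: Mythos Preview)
Your proof is correct and follows essentially the same approach as the paper: apply Fubini--Tonelli to rewrite the left integral as $\int_{(-\beta_0,\beta_0)}(\beta_0-|\alpha|)\,d\mu(\alpha)$, then use the uniform comparability of $x$ and $\sin x$ on $[0,\beta_0]\subset[0,\pi/2)$. Your version is slightly more explicit about the constants and the measurability justification, but the argument is the same.
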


\begin{proof}
	Note that 
	\begin{equation}
		\mu\Big((-\beta_0+\alpha, \beta_0-\alpha)\Big) = \int_{(-\beta_0, \beta_0)} 1_{(-\beta_0+\alpha, \beta_0-\alpha)}(t)d\mu(t) 
	\end{equation}
	Therefore, by the Fubini-Tonelli theorem, we have
	\begin{equation}
	\label{eq 10.11.1}
	\begin{aligned}
		&\int_{0}^{\beta_0} \mu\Big((-\beta_0+\alpha, \beta_0-\alpha)\Big) d\alpha\\
		=&\int_{0}^{\beta_0}\left(\int_{(-\beta_0, \beta_0)} 1_{(-\beta_0+\alpha, \beta_0-\alpha)}(t)d\mu(t)\right) d\alpha\\
		=& \int_{(-\beta_0, \beta_0)}\left(\int_{0}^{\beta_0}1_{(-\beta_0+\alpha, \beta_0-\alpha)}(t)d\alpha\right) d\mu(t)\\
		=& \int_{(-\beta_0, \beta_0)}\left(\int_{0}^{\beta_0}1_{[0, \beta_0-|t|)}(\alpha)d\alpha\right) d\mu(t)\\
		=&\int_{(-\beta_0, \beta_0)}\beta_0-|t| d\mu(t).
	\end{aligned}
	\end{equation}
	
	Note that on $(-\beta_0, \beta_0)$, there exists $c_0>0$ such that 
	\begin{equation}
		\frac{1}{c_0}\sin(\beta_0-|t|)\leq \beta_0-|t|\leq c_0\sin (\beta_0-|t|).
	\end{equation}
	Hence, we conclude that
	\begin{equation}
		\int_{(-\beta_0, \beta_0)}\beta_0-|t| d\mu(t)<\infty,
	\end{equation}
	if and only if 
	\begin{equation}
		\int_{(-\beta_0, \beta_0)}\sin(\beta_0-|t|) d\mu(t)<\infty.
	\end{equation}
	This, when combined with \eqref{eq 10.11.1}, immediately implies the desired result.

	\end{proof}

\subsection{Necessity of the condition}
\begin{theorem}\label{thm 10.3.1}
	In $\mathbb{R}^2$, if $K$ is $C$-asymptotic, then 
	\begin{equation}
		\int_{0}^{\pi/2} S_K(\omega_{\alpha})d\alpha<\infty.
	\end{equation} 
\end{theorem}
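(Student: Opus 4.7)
The plan is first to apply Lemma \ref{lemma 10.3.1} to $\mu=S_K$ (which is locally finite on $\Omega$ by Corollary \ref{coro 11.20.1}) and reduce the target to the equivalent weighted estimate
\[
\int_{-\beta_0}^{\beta_0}(\beta_0-|\theta|)\,dS_K(\theta)<\infty.
\]
The main tool is the classical two-dimensional representation of surface area measure in terms of the support function: for any planar convex body $L$ with continuous support function, $S_L=h_L+h_L''$ in the distributional sense, with $h_L''$ the second derivative in the angular parameter $\theta$. For the unbounded set $K$ I would localize: for each $\alpha>0$, Lemma \ref{lemma 10.12.1} produces $t>0$ with $\tau_K(\overline{\omega_\alpha})\subset K^-(t)$, so $S_K$ and $S_{K^-(t)}$ agree on $\omega_\alpha$ and $h_K=h_{K^-(t)}$ on $\omega_\alpha$; applying the identity to the convex body $K^-(t)$ and letting $\alpha\to 0^+$ yields $S_K=h_K+h_K''$ as locally finite signed measures on $\Omega=(-\beta_0,\beta_0)$.

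Next I would evaluate the weighted integral by approximating $\phi(\theta)=\beta_0-|\theta|$ by the compactly supported tent functions
\[
\phi_\epsilon(\theta):=\max(\beta_0-|\theta|-\epsilon,\,0), \qquad \epsilon>0,
\]
so that $\phi_\epsilon\nearrow\phi$ pointwise on $(-\beta_0,\beta_0)$. A direct computation of the distributional second derivative of the piecewise-linear $\phi_\epsilon$ gives the finite signed measure
\[
\phi_\epsilon''=\delta_{-\beta_0+\epsilon}-2\delta_0+\delta_{\beta_0-\epsilon}.
\]
Pairing $\phi_\epsilon$ against $S_K=h_K+h_K''$ and integrating by parts---justified by mollifying $\phi_\epsilon$ to a smooth compactly supported approximation and using that $h_K''=S_K-h_K\,d\theta$ is a locally finite signed measure on $\Omega$ acting continuously on $C_c^0(\Omega)$---gives
\[
\int_\Omega\phi_\epsilon\,dS_K=\int_\Omega\phi_\epsilon\,h_K\,d\theta+h_K(-\beta_0+\epsilon)-2h_K(0)+h_K(\beta_0-\epsilon).
\]

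The last step is to pass to the limit $\epsilon\to 0^+$. The left-hand side tends to $\int_\Omega(\beta_0-|\theta|)\,dS_K$ by monotone convergence. On the right, Theorem \ref{thm 5.20.1} guarantees that $h_K$ extends continuously to $[-\beta_0,\beta_0]$ with $h_K(\pm\beta_0)=0$, so the two outer boundary terms vanish; dominated convergence handles the Lebesgue integral, leaving
\[
\int_{-\beta_0}^{\beta_0}(\beta_0-|\theta|)\,dS_K(\theta)=\int_{-\beta_0}^{\beta_0}(\beta_0-|\theta|)\,h_K(\theta)\,d\theta-2h_K(0),
\]
which is finite because $h_K$ is bounded continuous (and nonpositive, since $K\subset C$) on the compact interval $[-\beta_0,\beta_0]$. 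Combining with Lemma \ref{lemma 10.3.1} concludes the proof.

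The hardest point will be the rigorous justification of the distributional identity $S_K=h_K+h_K''$ on $\Omega$ for the noncompact $K$, together with the integration by parts against the only Lipschitz test function $\phi_\epsilon$. Both pieces reduce to standard facts: the identity follows from the bounded-convex-body approximation $K\mapsto K^-(t)$ and the classical Aleksandrov formula for planar convex bodies with continuous support function, while the integration by parts follows from mollification of $\phi_\epsilon$ combined with the fact that $h_K''$ is already a signed Radon measure (not merely a distribution) on $\Omega$.
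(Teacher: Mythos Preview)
Your argument is correct and takes a genuinely different route from the paper. The paper's proof is geometric: it picks the point $x_0$ where $\partial K$ meets the negative $x$-axis, splits $\partial K\cap\operatorname{int}C$ into the upper and lower arcs $\partial^+K$ and $\partial^-K$, and orthogonally projects each arc onto the line through $x_0$ perpendicular to the corresponding boundary ray $l_i$ of $C$. Because $K$ is $C$-pseudo, each projection lands inside the finite segment $[x_0,b_i]$, and the length of the projection equals $\int_{\partial^\pm K}|\nu_K\cdot(\cos\beta_0,\pm\sin\beta_0)|\,d\mathcal H^1$, which dominates $\int\sin(\beta_0\mp\alpha)\,dS_K$ on the relevant half of $\Omega$; Lemma~\ref{lemma 10.3.1} then closes the argument. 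Your approach instead exploits the planar ODE structure $S_K=h_K+h_K''$ (justified locally via Lemma~\ref{lemma 10.12.1} and the bounded truncations $K^-(t)$), tests it against the tent functions $\phi_\epsilon$, and passes to the limit using Theorem~\ref{thm 5.20.1} for the boundary terms. What you gain is an explicit identity
\[
\int_{-\beta_0}^{\beta_0}(\beta_0-|\theta|)\,dS_K(\theta)=\int_{-\beta_0}^{\beta_0}(\beta_0-|\theta|)\,h_K(\theta)\,d\theta-2h_K(0),
\]
rather than merely an upper bound, and the proof is essentially self-contained analysis once the distributional identity is in hand. The paper's projection argument, on the other hand, is more geometric, avoids any discussion of distributional derivatives, and is what is later recycled (with the reverse inclusion of projections) in Lemma~\ref{lemma 10.13.1} for the sufficiency direction; your method does not obviously feed into that step in the same way.
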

\begin{proof}
If $K$ contains the origin, or, equivalently, $K=C$, there is nothing to show.
	
	Since $K$ is $C$-asymptotic, the set 
	\begin{equation}
		\partial K\cap \big( (-\infty ,0)\times \{0\}\big)
	\end{equation}
	contains exactly one point. Denote this point by $x_0$. Let $v_0 =(\cos\alpha_0, \sin\alpha_0)$ be an outer unit normal of $K$ at $x_0\in \partial K\cap \text{int }C$. Since $h_K=0$ on $\partial \Omega$, we have $v_0\in \text{int } \Omega$, or, equivalently, $\alpha_0\in (-\beta_0, \beta_0)$. 
	
	Divide $(\partial K\cap \text{int }C)\setminus \{x_0\}$ into two parts
	\begin{equation}
		\begin{aligned}
			\partial^+ K = \{x\in \partial K\cap \text{int } C : \langle x, e_2\rangle>0\},\\
			\partial^- K = \{x\in \partial K\cap \text{int } C : \langle x, e_2\rangle<0\}	.
		\end{aligned}
	\end{equation}  
	Denote
	\begin{equation}
		\begin{aligned}
			l_1 =\left\{\big(r\cos(\beta_0+\frac{\pi}{2}), r\sin(\beta_0+\frac{\pi}{2})\big): r>0\right\},\\
			l_2 =\left\{\big(r\cos(\beta_0+\frac{\pi}{2}), -r\sin(\beta_0+\frac{\pi}{2})\big): r>0\right\}.
		\end{aligned}
	\end{equation}
	Note that $l_1$ and $l_2$ are the two boundary lines of the cone $C$. Let $b_1\in l_1$ and $b_2\in l_2$ be such that 
	\begin{equation}
		\dist(x_0, b_1)=\dist(x_0, b_2) = \dist(x_0, l_1)=\dist(x_0,l_2).
	\end{equation} 
	Write $\seg(x_1,x_2)$ for the open line segment connecting two points $x_1, x_2$. It is simple to see that $\seg(x_0,b_1)\perp l_1$ and $\seg(x_0, b_2)\perp l_2$. We will also write $\seg[x_1,x_2]$ for the closed line segment.
	
	Since $K$ is $C$-pseudo, we have $x_0+C\subset K$ and consequently $\partial^+ K\cup  \partial^-K\subset  C\setminus (x_0 +\text{int} C)$. By definition of $\partial^+K$ and $\partial^-K$, we have
	\begin{equation}
	\label{eq 10.3.9}
		\begin{aligned}
			\partial ^+K &\subset \{x\in C: \langle x, e_2\rangle >0 \text{ and } \langle x, (\cos \beta_0, \sin \beta_0)\rangle\geq \langle x_0, (\cos \beta_0, \sin \beta_0)\rangle\}\\
			\partial^-K &\subset \{x\in C: \langle x, e_2\rangle <0 \text{ and } \langle x, (\cos \beta_0, -\sin \beta_0)\rangle\geq \langle x_0, (\cos \beta_0, -\sin \beta_0)\rangle\}.
		\end{aligned}
	\end{equation}
	For $i=1,2$, denote by $B_i$ by the line spanned by $\seg[x_0,b_i]$. Write $P_l\,x$ for the image of the orthogonal projection of $x$ onto $l$. By \eqref{eq 10.3.9}, we have
	\begin{equation}
		\begin{aligned}
			P_{B_1}(\partial^+ K) &\subset \seg[x_0, b_1],\\
			 P_{B_2}(\partial^- K) &\subset \seg[x_0, b_2].
		\end{aligned}
	\end{equation}
Consequently, we have $|P_{B_1}(\partial^+ K) |<\infty$ and $|P_{B_2}(\partial^- K) |<\infty$.

On the other hand, by definition of surface area measure, 
\begin{equation}
\label{eq 10.3.10}
	\begin{aligned}
		|P_{B_1}(\partial^+ K) |&=\int_{x\in \partial^+K} |\nu_K(x)\cdot (\cos \beta_0, \sin \beta_0)|d\mathcal{H}^1(x)\\
		&\geq \int_{(\alpha_0, \beta_0)} \cos (\frac{\pi}{2}-(\beta_0-\alpha)) dS_K(\alpha)\\
		&= \int_{(\alpha_0, \beta_0)} \sin (\beta_0-\alpha) dS_K(\alpha).
	\end{aligned}
\end{equation}
Similarly, we have,
\begin{equation}
\label{eq 10.3.11}
	|P_{B_2}(\partial^- K) |\geq \int_{(-\beta_0, \alpha_0)} \sin (\beta_0+\alpha) dS_K(\alpha).
\end{equation}

Combining \eqref{eq 10.3.10}, \eqref{eq 10.3.11}, Corollary \ref{coro 11.20.1}, and Lemma \ref{lemma 10.3.1}, we conclude the desired result.
\end{proof}

\subsection{Sufficiency of the condition}
This subsection is dedicated to showing the following theorem.

\begin{theorem}\label{thm 10.3.2}
	In $\mathbb{R}^2$, let $\mu$ be a nonzero Borel measure on $\Omega$. If 	\begin{equation}\label{eq 10.13.2}
		\int_0^{\pi/2} \mu(\omega_\alpha)d\alpha<\infty,
	\end{equation}
	then there exists a $C$-asymptotic set $K$ such that $\mu=S_K$ on $\Omega$.
\end{theorem}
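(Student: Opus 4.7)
The plan is to reduce the problem to the one-dimensional Dirichlet boundary value problem of Section \ref{section relation with PDE} and solve it explicitly using the Green's function of $-d^2/dx^2$ on an interval. In dimension $2$, the Monge-Amp\`ere measure of a convex function on an interval is simply the distributional second derivative (i.e., the Stieltjes measure associated with the non-decreasing $u'$), so the PDE in Theorem \ref{thm 11.26.1} becomes an elementary ODE that admits an integral representation.

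First I would set up the one-dimensional problem. With the coordinates fixed so that (10.3.7) holds, $\Phi^{-1}(\Omega)$ is the interval $(a,b) := (-\tan\beta_0, \tan\beta_0)$. Let $\widetilde{\mu}$ be the measure on $(a,b)$ defined by \eqref{eq 10.9.2}. Using the change of variables $y = \tan\theta$ and the identity $\tan\beta_0 \pm \tan\theta = \sin(\beta_0 \pm \theta)/(\cos\theta\cos\beta_0)$, one checks that
\[
\int_a^b (y-a)(b-y)\, d\widetilde\mu(y) \;\asymp\; \int_{-\beta_0}^{\beta_0} \sin(\beta_0-\theta)\sin(\beta_0+\theta)\, d\mu(\theta) \;\asymp\; \int_{-\beta_0}^{\beta_0} \sin(\beta_0-|\theta|)\, d\mu(\theta),
\]
which by Lemma \ref{lemma 10.3.1} is equivalent to \eqref{eq 10.13.2}. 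Next, with the Green's function $G(x,y) = (b-a)^{-1}(\min(x,y)-a)(b-\max(x,y))$, define
\[
u(x) := -\int_a^b G(x,y)\, d\widetilde{\mu}(y) = -\frac{b-x}{b-a}\int_a^x (y-a)\, d\widetilde{\mu}(y) - \frac{x-a}{b-a}\int_x^b (b-y)\, d\widetilde{\mu}(y).
\]
The bound $G(x,y) \le (y-a)(b-y)/(b-a)$ together with the integrability above guarantees $u(x)$ is finite for each $x\in[a,b]$, and $u$ is convex with $u'' = \widetilde{\mu}$ distributionally, so in particular $\mathcal{M}_u = \widetilde\mu$ on $(a,b)$.

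The key technical step---and the one I expect to require the most care---is verifying continuity of $u$ at the endpoints $x=a$ and $x=b$. As $x \to b^-$, the factor $(b-x)/(b-a)$ in front of the first term vanishes, but since $\widetilde{\mu}$ can have infinite total mass, the integral $\int_a^x (y-a)\,d\widetilde{\mu}(y)$ may diverge as $x\to b^-$; I would therefore split the integration at some auxiliary $x_0 \in (a,b)$. The part over $[a,x_0]$ is a fixed finite quantity multiplied by $(b-x)/(b-a) \to 0$, while the part over $[x_0,x]$ is controlled by $(b-a)^{-1}\int_{x_0}^b (y-a)(b-y)\, d\widetilde\mu(y)$ (using $b-y \ge b-x$ on $[x_0,x]$), which is made arbitrarily small by choosing $x_0$ close to $b$. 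The second term of $u(x)$ vanishes directly by dominated convergence with dominant $(y-a)(b-y)\in L^1(d\widetilde{\mu})$. The analogous argument works at $x=a^+$, giving $u\in C([a,b])$ with $u(a)=u(b)=0$.

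With all hypotheses of Theorem \ref{thm 11.26.1} verified---$u\in C([a,b])$ is convex, solves $\mathcal{M}_u = \widetilde{\mu}$ on $(a,b)$, and vanishes on the boundary---Theorem \ref{thm 11.26.1} produces a $C$-asymptotic set $K\subset C$ with $S_K = \mu$ on $\Omega$. The construction pinpoints why \eqref{eq 11.19.1} is the sharp condition: integrability of the weight $(y-a)(b-y)$ is exactly what is required for the two boundary-vanishing estimates in Step 3 to close, and anything weaker would allow $u$ to fail continuity at an endpoint, which by Proposition \ref{prop 10.12.1} would obstruct the $C$-asymptotic property.
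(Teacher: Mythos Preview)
Your argument is correct and takes a genuinely different route from the paper's proof. The paper proceeds by approximation: it restricts $\mu$ to $\overline{\omega_{1/i}}$ to get finite measures, invokes Theorem \ref{thm 10.9.3} to obtain $C$-close sets $K_i$, uses the geometric Lemma \ref{lemma 10.13.1} (projections of $\partial^{\pm}K_i$ onto lines orthogonal to $\partial C$) to show $\dist(o,K_i)$ is uniformly bounded, extracts a limit $K$ via Lemma \ref{lemma 10.13.2}, identifies $S_K=\mu$ via Lemma \ref{lemma 10.11.2}, and finally translates to enforce $h_K=0$ on $\partial\Omega$. Your proof instead exploits that in dimension $2$ the Monge--Amp\`ere measure on an interval is just the distributional second derivative, so the Dirichlet problem is linear and admits an explicit Green's function solution; continuity at the endpoints is exactly the integrability of $(y-a)(b-y)$ against $\widetilde{\mu}$, which you correctly identify with \eqref{eq 10.13.2} via Lemma \ref{lemma 10.3.1}, and then Theorem \ref{thm 11.26.1} delivers the $C$-asymptotic $K$ directly. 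Your approach is shorter and more transparent in the planar case, and makes the sharpness of the condition visible at the level of the PDE; the paper's compactness scheme, while more involved here, is the template one would try to push to higher dimensions (where the Monge--Amp\`ere operator is genuinely nonlinear and no Green's function is available), which is presumably why the authors present it that way.
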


As before, we will choose our coordinate system carefully so that $C$ and $C^\circ$ take the forms as in \eqref{eq 10.3.7}.

The following lemma is critical.
\begin{lemma}\label{lemma 10.13.1}
	Let $K_i$ be a sequence of $C$-close sets in $\mathbb{R}^2$ with $K_i\neq C$. If 
	\begin{equation}
	\label{eq 10.13.1}
		\int_{(-\beta_0, \beta_0)} \sin(\beta_0-|\alpha|)dS_{K_i}(\alpha)
	\end{equation}
	is uniformly bounded in $i$, then the distance between $K_i$ and the origin, $\dist(o,K_i)$, is uniformly bounded.
\end{lemma}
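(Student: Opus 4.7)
I will derive the exact identity
\[\int_{(-\beta_0,\beta_0)}\sin(\beta_0-|\alpha|)\,dS_K(\alpha) = -2\cos\beta_0\cdot h_K(e_1)\]
valid for every $C$-asymptotic $K\subset C$ with $K\neq C$, and then compare $-h_K(e_1)$ with $\dist(o,K)$ using only $K\subset C$. Since every $C$-close set is $C$-asymptotic (Proposition \ref{prop 10.12.1}), the lemma follows from these two ingredients applied to each $K_i$.

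\textbf{The identity.} Because $K\neq C$, the supremum $h_K(e_1)=\sup_{x\in K}x_1$ is strictly negative and attained on a (possibly degenerate) vertical face $F\subset\partial K$ with top endpoint $p$ and bottom endpoint $q$ sharing first coordinate $h_K(e_1)$. The rest of $\partial K\cap\Int C$ decomposes into an upper arc $A^+=\tau_K((0,\beta_0))$ running from $p$ outward to infinity along $l_1$, and a lower arc $A^-=\tau_K((-\beta_0,0))$ running from $q$ outward to infinity along $l_2$. Parametrize $A^+$ by arc length with the outward orientation; the unit tangent at $x\in A^+$ is $(-\sin\alpha(x),\cos\alpha(x))$, where $\alpha(x)\in(0,\beta_0)$ denotes the outer normal angle at $x$, and a direct computation gives
\[\bigl\langle(-\sin\alpha(x),\cos\alpha(x)),(\cos\beta_0,\sin\beta_0)\bigr\rangle=\sin(\beta_0-\alpha(x))>0,\]
so the projection of $A^+$ onto the direction $(\cos\beta_0,\sin\beta_0)$ (perpendicular to $l_1$) is monotone. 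Its value at $p$ is $h_K(e_1)\cos\beta_0+p_2\sin\beta_0$; at the infinity-end of $A^+$ it tends to $0$, since $K$ being $C$-asymptotic implies $h_K(\cos\beta_0,\sin\beta_0)=0$ by Theorem \ref{thm 5.20.1}, with Lemma \ref{lemma 5.20.2} providing the quantitative decay forcing points of $A^+$ with $|x|$ large to have $\langle x,(\cos\beta_0,\sin\beta_0)\rangle$ small. Converting the total projection change into the weighted surface area integral via the Gauss map,
\[\int_{(0,\beta_0)}\sin(\beta_0-\alpha)\,dS_K=-h_K(e_1)\cos\beta_0-p_2\sin\beta_0.\]
The symmetric computation on $A^-$ with direction $(\cos\beta_0,-\sin\beta_0)$ gives
\[\int_{(-\beta_0,0)}\sin(\beta_0+\alpha)\,dS_K=-h_K(e_1)\cos\beta_0+q_2\sin\beta_0,\]
while a nondegenerate $F$ contributes a delta mass $p_2-q_2$ to $S_K$ at $\alpha=0$, yielding the term $\sin\beta_0(p_2-q_2)$ in the integral. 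Summing the three contributions, the $p_2$ and $q_2$ terms cancel and the identity emerges.

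\textbf{Conclusion.} Since $K\subset C=\{x\in\mathbb{R}^2:\,|x_2|\leq|x_1|\cot\beta_0\}$, every $x\in K$ satisfies $|x|^2\leq x_1^2/\sin^2\beta_0$, hence $|x_1|\geq|x|\sin\beta_0$. As $x_1\leq 0$ throughout $K$, taking infima over $K$ yields
\[-h_K(e_1)=\inf_{x\in K}|x_1|\geq\sin\beta_0\cdot\dist(o,K).\]
Combined with the identity,
\[\int_{(-\beta_0,\beta_0)}\sin(\beta_0-|\alpha|)\,dS_{K_i}\geq\sin(2\beta_0)\cdot\dist(o,K_i),\]
so a uniform bound $M$ on the integrals produces the uniform bound $\dist(o,K_i)\leq M/\sin(2\beta_0)$. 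The main subtle point is the vanishing of the projection of $A^+$ at its infinity-end along $l_1$, which is precisely where the $C$-asymptotic hypothesis enters; handling the possibly nondegenerate face $F$ and justifying the fundamental theorem of calculus along an unbounded arc via approximation by truncations $K^-(t)$ is routine.
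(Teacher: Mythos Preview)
Your proof is correct and takes a genuinely different route from the paper's. The paper works with the point $x_0(K_i)=\partial K_i\cap\bigl((-\infty,0)\times\{0\}\bigr)$ on the negative $x$-axis, bounds the lengths of the perpendicular segments $\seg(x_0,b_1)$ and $\seg(x_0,b_2)$ by the two half-integrals $\int_{[\alpha_0,\beta_0)}\sin(\beta_0-\alpha)\,dS_{K_i}$ and $\int_{(-\beta_0,\alpha_0]}\sin(\beta_0+\alpha)\,dS_{K_i}$ via a projection-containment argument, and then takes a minimum. By contrast, you anchor at the face with normal $e_1$, and by tracking the signed displacement of each arc in the direction perpendicular to the corresponding boundary ray you obtain the \emph{exact identity} $\int_{(-\beta_0,\beta_0)}\sin(\beta_0-|\alpha|)\,dS_K=-2\cos\beta_0\,h_K(e_1)$, after which the elementary inclusion $K\subset C$ gives the clean bound $\dist(o,K_i)\le M/\sin(2\beta_0)$. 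Your argument is sharper (it yields the optimal constant) and conceptually tidier; the paper's argument avoids the need to verify that the projection tends to $0$ at the far end of each arc and so uses the $C$-close hypothesis only implicitly, whereas your identity explicitly consumes the $C$-asymptotic condition at that step. The justification you label ``routine'' (the vanishing of $\langle x,(\cos\beta_0,\sin\beta_0)\rangle$ along $A^+$ at its far end) is indeed straightforward: either $A^+$ is unbounded, in which case monotonicity forces its points to accumulate near $l_1$ rather than $l_2$ and hence the projection tends to $0$, or $A^+$ terminates at a point beyond which $\partial K$ is a ray with normal $(\cos\beta_0,\sin\beta_0)$, which the $C$-asymptotic condition forces to lie on $l_1$ itself. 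One small correction: the implication ``$C$-close $\Rightarrow$ $C$-asymptotic'' is not Proposition~\ref{prop 10.12.1} (which says $C$-asymptotic $\Rightarrow h_K=0$ on $\partial\Omega$) but rather the chain of inclusions stated in the introduction.
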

\begin{proof}

	We use notations from the proof of Theorem \ref{thm 10.3.1}. Let $x_0(K_i)$ be the unique point in $\partial K_i \cap \big( (-\infty, 0)\times \{0\}\big)$. Let $v_0(K_i) = \big(\cos\alpha_0(K_i), \sin \alpha_0(K_i)\big)$ be an outer unit normal of $K_i$ at $x_0(K_i)$. We have $\alpha_0(K_i) \in (-\beta_0, \beta_0)$. Like-wise, we also define $\partial^+K_i$, $\partial^-K_i$, $b_1(K_i)$, $b_2(K_i)$, $B_1(K_i)$, and $B_2(K_i)$ as in the proof of Theorem \ref{thm 10.3.1}. 
	
	We claim that 
	\begin{equation}
		\begin{aligned}
		\seg(x_0(K_i),b_1(K_i))&\subset P_{B_1(K_i)} (\partial^+K_i),\\
		\seg(x_0(K_i),b_2(K_i))&\subset P_{B_2(K_i)} (\partial^-K_i).
		\end{aligned}
	\end{equation}
	We only prove the first relation as the second one follows in the same way. Assume that there exists $y_0\in \seg(x_0(K_i),b_1(K_i))$ but $y_0\notin P_{B_1(K_i)} (\partial^+K_i)$. Since $P_{B_1(K_i)} (\partial^+K_i)$ is convex, this means $\seg (y_0, b_1(K_i)) \cap P_{B_1(K_i)} (\partial^+K_i)=\emptyset$. This further implies that there is no point in $K_i$ that is in the region within the second quadrant between $l_1$ and the line parallel to $l_1$ that crosses $y_0$. This is a contradiction to the fact that $K_i$ is $C$-close. 
	
	Hence,
	\begin{equation}
	\begin{aligned}
		|\seg(x_0(K_i),b_1(K_i))|&\leq |P_{B_1(K_i)} (\partial^+K_i)|\\
		&=\int_{x\in \partial^+ K_i } |\nu_{K_i}(x)\cdot (\cos \beta_0, \sin \beta_0)|d\mathcal{H}^1(x)\\
		&\leq\int_{[\alpha_0(K_i),\beta_0)} \cos (\frac{\pi}{2}-(\beta_0-\alpha)) dS_{K_i}(\alpha)\\
		&= \int_{[\alpha_0(K_i), \beta_0)} \sin (\beta_0-\alpha) dS_{K_i}(\alpha).
	\end{aligned}	
	\end{equation}
	Similarly, one has
	\begin{equation}
		|\seg(x_0(K_i),b_2(K_i))|\leq \int_{(-\beta_0, \alpha_0(K_i)]} \sin (\beta_0+\alpha) dS_{K_i}(\alpha).
	\end{equation}
	Note that $|\seg(x_0(K_i),b_1(K_i))|=|\seg(x_0(K_i),b_2(K_i))|$. Hence, 
	\begin{equation}
	\label{eq 10.7.1}
	\begin{aligned}
		&|\seg(x_0(K_i),b_1(K_i))|\\
		=& |\seg(x_0(K_i),b_2(K_i))|\\\leq& \min\left\{\int_{[\alpha_0(K_i), \beta_0)} \sin (\beta_0-\alpha) dS_{K_i}(\alpha),\int_{(-\beta_0, \alpha_0(K_i)]} \sin (\beta_0+\alpha) dS_{K_i}(\alpha)\right\}. 
	\end{aligned}
	\end{equation}
	
	By the fact that \eqref{eq 10.13.1} is uniformly bounded, we conclude that 
\begin{equation}
\min\left\{\int_{[\alpha_0(K_i), \beta_0)} \sin (\beta_0-\alpha) dS_{K_i}(\alpha), \int_{(-\beta_0, \alpha_0(K_i)]} \sin (\beta_0+\alpha) dS_{K_i}(\alpha)\right\}
\end{equation}
is uniformly bounded from above in $i$, regardless of whether $\alpha_0(K_i)\geq 0$ or $\alpha_0(K_i)\leq 0$. When combined with \eqref{eq 10.7.1}, this implies
\begin{equation}
	|\seg(x_0(K_i),b_1(K_i))|=|\seg(x_0(K_i),b_2(K_i))|
\end{equation}
is uniformly bounded from above in $i$. This implies that $x_0(K_i)$ is uniformly bounded. Hence, $\dist(o,K_i)$ is uniformly bounded.
\end{proof}

We are ready to provide a proof of Theorem \ref{thm 10.3.2}.
\begin{proof}[Proof of Theorem \ref{thm 10.3.2}]
	We will use an approximation scheme. For each sufficiently large $i\in \mathbb{N}$, consider the measure $\mu_i=\mu_i\mres \overline{\omega_{1/i}}$. Equation \eqref{eq 10.13.2} implies that $\mu_i$ is a finite Borel measure on $\Omega$. By Theorem \ref{thm 10.9.3}, there exists a $C$-close set $K_i$ such that $\mu_i = S_{K_i}$ on $\Omega$.
	
	Note that 
	\begin{equation}
		\int_{(-\beta_0, \beta_0)} \sin(\beta_0-|\alpha|)d\,S_{K_i}(\alpha) = \int_{(-\beta_0, \beta_0)} \sin(\beta_0-|\alpha|)d\,\mu_i(\alpha)\leq \int_{(-\beta_0, \beta_0)} \sin(\beta_0-|\alpha|)d\,\mu(\alpha)
	\end{equation}
	is uniformly bounded in $i$, thanks to \eqref{eq 10.13.2} and Lemma \ref{lemma 10.3.1}. Thus, Lemma \ref{lemma 10.13.1} implies that $\dist(o,K_i)$ is uniformly bounded. We therefore may use Lemma \ref{lemma 10.13.2} to extract a subsequence, which we still denote as $K_i$, such that $K_i$ converges to a $C$-pseudo cone $K$. The fact that  $\mu =S_K$ on $\Omega$ immediately follows from Lemma \ref{lemma 10.11.2}. 
	
		It only remains to show that $h_K=0$ on $\partial \Omega$. Consider 
	\begin{equation}
	 E=\{x\in \mathbb{R}^2: x\cdot v=h_K(v) \text{ for all } v\in \partial \Omega\}. 
	\end{equation}
	Note that in dimension 2, $E$ is the intersection of two nonparallel lines and therefore there exists a unique point $z_0\in E$. Moreover, $z_0\in C$, as $h_K\leq 0$. If $z_0=o$, then we are done, as $h_K=0$ on $\partial \Omega$ in this case. If $z_0\neq o$, then by definition of the support function,
	\begin{equation}
		K\subset C+z_0
	\end{equation}
	and consequently,
	\begin{equation}
		K':=K-z_0\subset C.
	\end{equation}
	Note that $S_{K'}=S_K$ as the surface area measure is translation-invariant. It is also simple to see that $h_{K'}=0$ on $\partial \Omega$ because of the choice of $z_0$.
\end{proof}

%\mycomment{
%\color{red}
%Here is why $K'$ must be a pseudo cone. First, note that the recession cone of $K'$ and $K$ are the same---translation does not change that. Hence, we know for every $x\in K'$ and $\lambda>0, y\in C$, we have $x+\lambda y\in K'$. Hence, for every $\lambda\geq 1$, we have
%\begin{equation}
%	\lambda x=x+(\lambda-1)x\in K'+(\lambda-1)C=K'+C=K',
%\end{equation}
%since $x\in K'\subset C$. This shows that $K'$ is a pseudo cone. Therefore, $K'$ is $C$-pseudo.
%
%}

Theorem \ref{thm main} now follows immediately from Theorems \ref{thm 10.3.1}, \ref{thm 10.3.2}, and \ref{thm uniqueness}.

\section{Higher Dimensions}\label{section higher dim}

In this section, we present some partial results involving the existence of solutions to the Minkowski problem \ref{problem 1} in higher dimensions ($n\geq 3$). It will be revealed that in higher dimensions, the problem gets much more complicated and a global integrability condition such as \eqref{eq 11.19.1} \emph{alone} cannot be the necessary and sufficient condition that one is looking for.

For simplicity, for each Borel measure $\mu$ on $\Omega$ and $m>0$, denote
\begin{equation}
	J_m(\mu)= \int_{0}^{\pi/2} \mu(\omega_{\alpha})^\frac{1}{m} d\alpha,
\end{equation}
where we recall that $\omega_\alpha =\{v\in \Omega: \delta_{\partial \Omega}(v)>\alpha\}$. Recall that Theorem \ref{thm main} states that in dimension 2, a given Borel measure $\mu$ is the surface area measure of some $C$-asymptotic set if and only if $J_1(\mu)<\infty$. In higher dimensions, the power $1/m$ is naturally expected to change according to the dimension.

The following necessary condition was shown in Schneider \cite[Theorem 3]{MR4264230}.
\begin{theorem}[\cite{MR4264230}]\label{thm 11.24.1}
	Let $K$ be a $C$-asymptotic set in $\rn$. Then the function
	\begin{equation}
		\alpha \mapsto \alpha^{n-1} S_K(\omega_\alpha)
	\end{equation}
	is bounded.
\end{theorem}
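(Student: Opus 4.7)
The plan is to confine $\tau_K(\omega_\alpha)$ to a bounded convex piece $K^-(t_\alpha)\subset C^-(t_\alpha)$ with $t_\alpha=O(1/\sin\alpha)$, and then use monotonicity of surface area for convex bodies to obtain $S_K(\omega_\alpha)=O(t_\alpha^{n-1})$.

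The first step is a geometric estimate: for every $u\in C\cap\sn$ and every $v\in \omega_\alpha$,
\[
|\langle u,v\rangle|\geq \sin\alpha.
\]
Since $u\in C\cap\sn$ and $w\in \Omega\subset \mathrm{int}\,C^\circ$, one has $\langle u,w\rangle<0$ strictly, so the great circle $H_u\cap \sn=\{w\in \sn:\langle u,w\rangle=0\}$ is disjoint from $\Omega$. Choose $p\in H_u\cap \sn$ closest to $v$ in spherical distance, so that $\arcsin|\langle u,v\rangle|=\delta(v,p)$. Either $p\in \overline{\Omega}$, in which case $p\in \partial \Omega$ by the disjointness above, or $p\notin \overline{\Omega}$, in which case the minimizing geodesic from $v\in \Omega$ to $p$ must exit $\Omega$ at some $q\in \partial\Omega$ with $\delta(v,q)\leq \delta(v,p)$. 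In either case $\delta(v,p)\geq \delta_{\partial\Omega}(v)>\alpha$, and monotonicity of $\sin$ on $[0,\pi/2]$ yields the claim.

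Next, assume $K\neq C$ (otherwise $S_K\equiv 0$) and fix $c_1>\dist(o,K)$, so there exists $y_0\in K$ with $|y_0|<c_1$. For $v\in \omega_\alpha$ and $x\in \tau_K(v)$,
\[
-c_1<\langle y_0,v\rangle \leq h_K(v)=\langle x,v\rangle\leq 0,
\]
the rightmost inequality following from $K\subset C$ and $v\in \Omega\subset C^\circ$. If $x\neq o$, then $x/|x|\in C\cap \sn$, so the first step gives
\[
|x|=\frac{|\langle x,v\rangle|}{|\langle x/|x|,v\rangle|}\leq \frac{c_1}{\sin\alpha}=:t_\alpha.
\]
Since $\langle x,u_*\rangle\leq |x|\leq t_\alpha$ and $x\in K$, we obtain $\tau_K(\omega_\alpha)\subset K^-(t_\alpha)\subset C^-(t_\alpha)$, the last inclusion from $K\subset C$.

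For the conclusion, observe $\tau_K(\omega_\alpha)\subset \partial K\cap H^-(t_\alpha)\subset \partial K^-(t_\alpha)$. Since $K^-(t_\alpha)\subset C^-(t_\alpha)$ are convex bodies, monotonicity of surface area under inclusion gives
\[
S_K(\omega_\alpha)=\mathcal{H}^{n-1}(\tau_K(\omega_\alpha))\leq \mathcal{H}^{n-1}(\partial K^-(t_\alpha))\leq \mathcal{H}^{n-1}(\partial C^-(t_\alpha))=c_C\, t_\alpha^{n-1},
\]
with $c_C$ depending only on $C$ (by homogeneity of $C$ and the explicit computation of the lateral area of the truncated cone plus the cap $C(t_\alpha)$). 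Using $\sin\alpha\geq (2/\pi)\alpha$ on $(0,\pi/2]$, this yields $\alpha^{n-1}S_K(\omega_\alpha)\leq c_C(\pi c_1/2)^{n-1}$. I expect the main subtlety to be the geometric estimate in step one; although intuitive, it requires the path-crossing argument to correctly relate the distance from $v$ to $H_u\cap \sn$ with $\delta_{\partial\Omega}(v)$, rather than an a priori more direct (but incorrect) identification.
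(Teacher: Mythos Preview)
Your proof is correct. The paper does not give its own proof of this theorem; it simply quotes the result from Schneider~\cite{MR4264230}. Your argument is a quantitative refinement of Lemma~\ref{lemma 10.12.1} (also taken from Schneider): whereas that lemma only asserts the existence of some $t_0=t_0(c_1,c_2)$ with $\tau_K(v)\subset C^-(t_0)$ whenever $\delta_{\partial\Omega}(v)\ge c_2$, you make the dependence explicit as $t_0=c_1/\sin c_2$ via the spherical estimate $|\langle u,v\rangle|\ge\sin\alpha$ for $u\in C\cap\sn$ and $v\in\omega_\alpha$, and then finish by monotonicity of total boundary measure for nested compact convex sets together with the homogeneity $C^-(t)=tC^-(1)$. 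This is the natural route and almost certainly close to Schneider's own argument. One small remark: your proof never uses that $K$ is $C$-asymptotic rather than merely $C$-pseudo with $K\neq C$, so you have in fact established the slightly stronger statement.
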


The following proposition immediately follows.
\begin{coro}\label{coro 11.19.1}
	Let $K$ be a $C$-asymptotic set in $\rn$. Then for  every $\varepsilon>0$, we have
	\begin{equation}
		J_{n-1+\varepsilon}(S_K)<\infty.
	\end{equation}
\end{coro}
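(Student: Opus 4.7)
The plan is to derive this directly from the pointwise bound supplied by Theorem \ref{thm 11.24.1}. By that theorem, there exists a constant $M>0$ (depending on $K$) such that
\begin{equation}
S_K(\omega_\alpha)\leq M\alpha^{-(n-1)} \quad \text{for all } \alpha\in (0,\pi/2).
\end{equation}
Raising this inequality to the power $1/(n-1+\varepsilon)$ yields
\begin{equation}
S_K(\omega_\alpha)^{\frac{1}{n-1+\varepsilon}} \leq M^{\frac{1}{n-1+\varepsilon}}\,\alpha^{-\frac{n-1}{n-1+\varepsilon}}.
\end{equation}

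The key point is then purely a single-variable integrability check: since $\varepsilon>0$, the exponent satisfies
\begin{equation}
\frac{n-1}{n-1+\varepsilon}<1,
\end{equation}
so $\int_0^{\pi/2}\alpha^{-\frac{n-1}{n-1+\varepsilon}}\,d\alpha$ is a convergent improper integral at $0$ (and there is no issue at $\pi/2$ since $\omega_\alpha=\emptyset$ for $\alpha$ larger than the inradius of $\Omega$ in the spherical metric, so the integrand is in fact supported on a bounded subinterval of $(0,\pi/2)$). Integrating the previous inequality from $0$ to $\pi/2$ therefore gives $J_{n-1+\varepsilon}(S_K)<\infty$.

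There is essentially no obstacle here; the statement is a one-line consequence of Schneider's bound combined with the elementary fact that $\int_0^1 \alpha^{-c}\,d\alpha<\infty$ whenever $c<1$. The only subtlety worth a brief remark is that the borderline exponent $m=n-1$ (corresponding to $\varepsilon=0$) is exactly where the integrability is lost, which is consistent with the authors' subsequent claim that no single exponent in place of $1/(n-1)$ in \eqref{eq 11.24.4} can be both necessary and sufficient.
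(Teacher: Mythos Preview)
Your proof is correct and is exactly the argument the paper has in mind: the corollary is stated as following immediately from Theorem \ref{thm 11.24.1}, and your computation---raising the bound $S_K(\omega_\alpha)\leq M\alpha^{-(n-1)}$ to the power $1/(n-1+\varepsilon)$ and noting that $\alpha^{-(n-1)/(n-1+\varepsilon)}$ is integrable on $(0,\pi/2)$ since the exponent is strictly less than $1$---is precisely that immediate step.
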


A simple observation is that the condition $J_m(\mu)<\infty$ becomes stronger (in other words, becomes a more restrictive condition) if $m$ is smaller; that is, for $m_1<m_2$, if $J_{m_1}(\mu)<\infty$, then $J_{m_2}(\mu)<\infty$.

One naturally wonders if the surface area measure of a $C$-asymptotic set $K$ could satisfy a stronger condition than what has been revealed in Corollary \ref{coro 11.19.1}. In Proposition \ref{prop 11.19.7}, for a specific cone $C$, we will construct, for every $m<n-1$, a $C$-asymptotic $K$ that satisfies $J_m(S_K)=\infty$. In light of this result, as well as Theorem \ref{thm main}, the following conjecture is expected to hold.
\begin{conjecture}\label{conjecture 11.19.1}
	Let $K$ be a $C$-asymptotic set in $\rn$. Then 
	\begin{equation}\label{eq 11.19.2}
		J_{n-1}(S_K)<\infty.
	\end{equation}
\end{conjecture}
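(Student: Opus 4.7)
The plan is to translate the conjecture into a Monge-Amp\`ere estimate via Section \ref{section relation with PDE}. Setting $u(x) = h_K(1,x)$ on $\widetilde{\Omega} = \Phi^{-1}(\Omega)$, the function $u$ is convex and continuous on $\overline{\widetilde{\Omega}}$, vanishes on $\partial\widetilde{\Omega}$ (by Theorem \ref{thm 5.20.1}), and satisfies $\monge_u = \widetilde{S_K}$ (by Theorem \ref{thm 10.9.1}). Because $|\langle v, e_1\rangle|$ is uniformly bounded above and below on $\overline{\Omega}$, and the spherical distance $\delta_{\partial\Omega}(v)$ is comparable (up to a bi-Lipschitz factor depending only on $C$ and $u_*$) to the Euclidean distance $\dist(\Phi^{-1}(v), \partial\widetilde{\Omega})$, the conjecture \eqref{eq 11.19.2} reduces to showing
\[
\int_0^{d_0} \monge_u(\widetilde{\Omega}_s)^{1/(n-1)}\, ds < \infty, \qquad \widetilde{\Omega}_s := \{x \in \widetilde{\Omega} : \dist(x, \partial\widetilde{\Omega}) > s\},
\]
where $d_0>0$ denotes the inradius of $\widetilde{\Omega}$.

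I would then attempt this estimate via a localized Aleksandrov maximum principle. Classically, for convex $v \in C(\overline{D})$ with $v = 0$ on $\partial D$ on a bounded convex $D \subset \mathbb{R}^{n-1}$, one has
\[
|v(y)|^{n-1} \leq c_n\, \dist(y, \partial D)\, \mathrm{diam}(D)^{n-2}\, \monge_v(D),
\]
which, applied to $u$ itself and rearranged over annular shells $\widetilde{\Omega}\setminus\widetilde{\Omega}_s$, should yield an inequality of the form $\monge_u(\widetilde{\Omega}_s) \leq C\bigl(\sup_{\widetilde{\Omega}_{s/2}\setminus\widetilde{\Omega}_s}|u|\bigr)^{n-1} s^{-1}$. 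Taking the $1/(n-1)$-th root and integrating in $s$, the desired bound would then follow as long as the boundary modulus $\omega(s) := \sup_{x:\,\dist(x,\partial\widetilde{\Omega}) \leq s}|u(x)|$ satisfies $\int_0^{d_0} \omega(s)\, s^{-1/(n-1)}\,ds < \infty$, a weak Dini-type condition.

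The principal obstacle, and the source of the genuine difficulty, is that Theorem \ref{thm 5.20.1} guarantees only qualitative continuity of $u$ on $\overline{\widetilde{\Omega}}$ with no a priori rate on $\omega(s)$. Corollary \ref{coro 11.19.1} does yield the pointwise decay $\monge_u(\widetilde{\Omega}_s) \lesssim s^{-(n-1)}$, which upon direct integration produces only $J_{n-1+\varepsilon}(S_K) < \infty$ for every $\varepsilon > 0$; this is exactly $\varepsilon$ short of the conjectured critical bound. Crossing this threshold requires extracting a quantitative boundary modulus $\omega(s) \to 0$ intrinsically from the $C$-asymptoticity of $K$, which is genuinely stronger than the $C$-pseudo property (where $u$ may be bounded but fail to vanish on $\partial\widetilde{\Omega}$). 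The partial results recorded in Section \ref{section higher dim}, in particular Lemma \ref{lemma 11.19.2} and Theorem \ref{thm 11.24.5}, should in principle furnish such a modulus under additional geometric hypotheses on $C$, and pasting these local estimates into a global integrability statement looks to be the most promising route toward a complete proof.
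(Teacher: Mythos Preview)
The statement you are attempting to prove is labeled a \emph{conjecture} in the paper, and the paper does not supply a proof; it is presented as an open problem. The paper only establishes the weaker statement $J_{n-1+\varepsilon}(S_K)<\infty$ for every $\varepsilon>0$ (Corollary~\ref{coro 11.19.1}, a direct consequence of Schneider's bound in Theorem~\ref{thm 11.24.1}) and shows via Proposition~\ref{prop 11.19.7} that the exponent $n-1$ would be sharp. There is therefore no paper proof to compare against.

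On the substance of your outline: the translation to a Monge--Amp\`ere problem on $\widetilde{\Omega}$ is correct, and your identification of the missing ingredient---a quantitative boundary modulus $\omega(s)$ for $u$---is exactly the heart of the difficulty. However, two points deserve correction. First, the intermediate inequality
\[
\monge_u(\widetilde{\Omega}_s)\leq C\Bigl(\sup_{\widetilde{\Omega}_{s/2}\setminus\widetilde{\Omega}_s}|u|\Bigr)^{n-1}s^{-1}
\]
does \emph{not} follow from the Aleksandrov maximum principle as you state it: that principle bounds $|u|$ from above in terms of $\monge_u$, which is the wrong direction. The elementary gradient bound for a convex $u$ vanishing on $\partial\widetilde{\Omega}$ gives only $|p|\leq |u(y)|/\dist(y,\partial\widetilde{\Omega})$ for $p\in\partial u(y)$, and since $|u(y)|$ for $y$ deep in $\widetilde{\Omega}_s$ is controlled by $\sup_{\widetilde{\Omega}}|u|$ rather than by $\omega(s)$, this yields only $\monge_u(\widetilde{\Omega}_s)\lesssim s^{-(n-1)}$---precisely Schneider's estimate and nothing more. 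No ``rearrangement over annular shells'' that you describe bridges this gap. Second, neither Lemma~\ref{lemma 11.19.2} nor Theorem~\ref{thm 11.24.5} furnishes a boundary modulus: the former is a device for \emph{constructing} measures that are not surface area measures (a tool for counterexamples to sufficiency), and the latter produces only $C$-pseudo solutions under a hypothesis on $\inf_{K\in C_x}\Gamma_m(S_K)$, with no control on $h_K|_{\partial\Omega}$. Your proposal is a plausible research direction, but as written it is not a proof, and the conjecture remains open in the paper.
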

We remark again that due to Proposition \ref{prop 11.19.7}, the index $n-1$ in \eqref{eq 11.19.2} is the best possible.

Unlike the dimension 2 case where the conjectured necessary condition \eqref{eq 11.19.2} is also sufficient, in higher dimensions, the situation becomes much more complicated. We will show in Proposition \ref{prop 11.19.6} that the condition \eqref{eq 11.19.2} \emph{alone} in higher dimensions is not a sufficient condition. In particular, for a specific cone $C$ in $\mathbb{R}^3$,  we will construct a Borel measure $\mu$ such that $J_2(\mu)<\infty$, yet it is impossible to find a $C$-asymptotic set $K$ such that $\mu=S_K$. Intuitively, this is due to the fact that the integrability condition \eqref{eq 11.19.2} exerts a global growth restriction on the given measure $\mu$ as $\delta_{\partial\Omega}(v)\rightarrow 0$. While in dimension 2 this captures sufficient information, in higher dimensions, the measure $\mu$ needs to satisfy additional conditions (\emph{e.g.}, some growth condition) in each subspace, which is not reflected in \eqref{eq 11.19.2}.

To prepare for Propositions \ref{prop 11.19.6} and \ref{prop 11.19.7}, we first introduce a linear  (in $\mu$) version of the functional $J_m$. For each Borel measure $\mu$ on $\Omega$ and $m>-1$, denote 
\begin{equation}
	\Gamma_m(\mu)= \int_{0}^\frac{\pi}{2} \mu(\omega_\alpha) \alpha^{m}\,d\alpha.
\end{equation}
It is simple to see that the condition $J_m(\mu)<\infty$ and the condition $\Gamma_m(\mu)<\infty$ are related to each other and we state it in the next lemma.

\begin{lemma}\label{lemma 11.19.3}
	Let $\mu$ be a Borel measure on $\Omega$. We have
	\begin{enumerate}
		\item if $J_m(\mu)<\infty$ for some $m>0$, then for every $\varepsilon>0$, we have $\Gamma_{m-1+\varepsilon}(\mu)<\infty$;
		\item if $\Gamma_m(\mu)<\infty$ for some $m>-1$, then for every $\varepsilon>0$, we have $J_{m+1+\varepsilon}(\mu)<\infty$.
	\end{enumerate} 
\end{lemma}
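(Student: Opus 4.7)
The strategy is to exploit the fact that $\alpha \mapsto \mu(\omega_\alpha)$ is non-increasing, because $\omega_\alpha \supseteq \omega_{\alpha'}$ whenever $\alpha \leq \alpha'$. This monotonicity allows one to trade an integral condition for a pointwise upper bound on $\mu(\omega_\alpha)$ of the form $C\alpha^{-p}$, which can then be substituted back into the other functional and estimated directly. Both implications follow from this general template.

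For part (1), I would observe that for each $\alpha \in (0,\pi/2)$, monotonicity gives
\begin{equation}
	\alpha\,\mu(\omega_\alpha)^{1/m} \leq \int_0^\alpha \mu(\omega_s)^{1/m}\,ds \leq J_m(\mu),
\end{equation}
so $\mu(\omega_\alpha) \leq J_m(\mu)^m\,\alpha^{-m}$. Substituting this into the definition of $\Gamma_{m-1+\varepsilon}(\mu)$ produces an integrand dominated by a constant times $\alpha^{\varepsilon-1}$, which is integrable on $(0,\pi/2)$ precisely when $\varepsilon>0$.

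For part (2), the analogous monotonicity estimate with the weight $s^m$ yields
\begin{equation}
	\frac{\alpha^{m+1}}{m+1}\,\mu(\omega_\alpha) \leq \int_0^\alpha s^m\,\mu(\omega_s)\,ds \leq \Gamma_m(\mu),
\end{equation}
where the assumption $m>-1$ is used both to make $\int_0^\alpha s^m\,ds$ equal to $\alpha^{m+1}/(m+1)$ and to keep the constant positive. Hence $\mu(\omega_\alpha) \leq (m+1)\Gamma_m(\mu)\,\alpha^{-(m+1)}$. Plugging this bound into $J_{m+1+\varepsilon}(\mu)$ leaves an integral whose integrand is dominated by a multiple of $\alpha^{-(m+1)/(m+1+\varepsilon)}$, and the exponent exceeds $-1$ exactly when $\varepsilon>0$, ensuring convergence at the origin.

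I do not anticipate any real obstacle: once the monotonicity of $\alpha\mapsto \mu(\omega_\alpha)$ is recorded, each implication becomes a direct elementary estimate reducing to checking the integrability of a single power of $\alpha$ near $0$. The slight care needed is to track how the shift by $\varepsilon$ lifts the exponent of $\alpha$ into the integrable range in each case; beyond that, no measure-theoretic subtlety arises since $\mu$ is assumed locally finite (so the tail function $\mu(\omega_\alpha)$ is finite for every $\alpha>0$).
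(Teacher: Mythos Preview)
Your argument is correct and follows essentially the same route as the paper: extract a pointwise bound $\mu(\omega_\alpha)\le C\alpha^{-p}$ from the given integral condition via the monotonicity of $\alpha\mapsto\mu(\omega_\alpha)$, then feed that bound into the other functional and check integrability of the resulting power near $0$. The only cosmetic difference is that the paper obtains the pointwise bound by contradiction (assuming $\alpha_i\mu(\omega_{\alpha_i})^{1/m}>i$ along a sequence and deriving $J_m(\mu)=\infty$), whereas you write the direct inequality $\alpha\,\mu(\omega_\alpha)^{1/m}\le\int_0^\alpha\mu(\omega_s)^{1/m}\,ds$; your version is cleaner and yields the explicit constant $J_m(\mu)^m$, but the underlying mechanism is identical.
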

\begin{proof}
	We only prove (1) since the proof for (2) is very similar. We claim that there exist $c_0>0$ and $\delta_0>0$ such that 
	\begin{equation}\label{eq 11.19.3}
		\mu(\omega_{\alpha})^{\frac{1}{m}} \leq \frac{c_0}{\alpha},
	\end{equation}
	when $\alpha\leq \delta_0$. If not, then there exists a decreasing sequence $\alpha_i$ such that 
	\begin{equation}
		\alpha_i \mu(\omega_{\alpha_i})^\frac{1}{m}>i.
	\end{equation}
	Therefore,
	\begin{equation}
		J_m(\mu) = \int_{0}^{\frac{\pi}{2}} \mu(\omega_{\alpha})^\frac{1}{m}\,d\alpha\geq \int_{0}^{\alpha_i} \mu(\omega_{\alpha})^\frac{1}{m}\,d\alpha\ge \mu(\omega_{\alpha_i})^\frac{1}{m}\alpha_i>i\rightarrow\infty,
	\end{equation}
	which is a contradiction to the fact that $J_m(\mu)<\infty$. Now, by \eqref{eq 11.19.3}, we have
	\begin{equation}
	\begin{aligned}
		\Gamma_{m-1+\varepsilon}(\mu)&=\left(\int_{0}^{\delta_0}+\int_{\delta_0}^{\frac{\pi}{2}}\right) \mu(\omega_\alpha) \alpha^{m-1+\varepsilon} d\alpha\\
		&\leq c_0^m\int_{0}^{\delta_0} \alpha^{-1+\varepsilon}d\alpha + \int_{\delta_0}^{\frac{\pi}{2}}\mu(\omega_\alpha) \alpha^{m-1+\varepsilon} d\alpha\\
		&<\infty.
	\end{aligned}
	\end{equation}
	Here, we used the fact that $J_m(\mu)<\infty$ implies that $\mu(\omega_{\alpha})\leq \mu(\omega_{\delta_0})<\infty$ for all $\alpha>\delta_0$. 
\end{proof}

For each $x\in \Int C$, denote by $C_x$ the collection of all $C$-asymptotic sets $K$ such that $x\in \partial K$.

The next lemma is essential in the proof of Proposition \ref{prop 11.19.6}.

\begin{lemma}\label{lemma 11.19.2}
	If $m>-1$ and $x\in \Int C$ are such that 
	\begin{equation}
		\inf_{K\in C_x} \Gamma_m(S_K)=0,
	\end{equation}
	then there exists a Borel measure $\mu$ on $\Omega$ such that $\Gamma_m(\mu)<\infty$, but there is no $C$-asymptotic set $K$ such that $S_K=\mu$.
\end{lemma}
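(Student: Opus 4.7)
The plan is to construct $\mu$ as a countable sum of surface area measures from a carefully chosen sequence in $C_x$, and then use the comparison principle together with the uniqueness theorem to deduce that any putative $C$-asymptotic solution $K$ must sit inside a translated cone $x_0+C$ with $x_0\in\Int C$, which is incompatible with $C$-asymptoticity.

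First I would use the hypothesis to extract $K_i\in C_x$ with $\Gamma_m(S_{K_i})<2^{-i}$ and set $\mu:=\sum_{i=1}^\infty S_{K_i}$. By Tonelli, $\Gamma_m(\mu)=\sum_i\Gamma_m(S_{K_i})<1$. Swapping the order of integration gives the identity $\Gamma_m(\mu)=\tfrac{1}{m+1}\int_\Omega\delta_{\partial\Omega}(v)^{m+1}\,d\mu(v)$, which combined with $\delta_{\partial\Omega}(v)\geq\alpha$ on $\omega_\alpha$ shows $\mu(\omega_\alpha)\leq(m+1)\alpha^{-(m+1)}\Gamma_m(\mu)<\infty$; so $\mu$ is locally finite on $\Omega$.

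Next, suppose for contradiction there is a $C$-asymptotic $K\subset C$ with $S_K=\mu$. For each $i$ we have $S_{K_i}\leq\mu=S_K$; Theorem \ref{thm comparison} applied with $L=K$ and measure $S_{K_i}$ produces a $C$-asymptotic $K'$ with $S_{K'}=S_{K_i}$ and $K'\supset K$, and Theorem \ref{thm uniqueness} forces $K'=K_i$. Hence $K\subset K_i$ for every $i$. Since $\dist(o,K_i)\leq|x|$ uniformly, Lemma \ref{lemma 10.13.2} provides a subsequence converging to a $C$-pseudo $K^*$. Weak continuity of surface area measure on compact subsets of $\Omega$ combined with $\Gamma_m(S_{K_i})\to 0$ forces $S_{K^*}\mres\Omega=0$; convexity then forces $K^*=x_0+C$ for some $x_0\in C$ with $x\in\partial(x_0+C)$. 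Passing $K\subset K_i$ to the Hausdorff limit of slices gives $K\subset K^*=x_0+C$. Assuming $x_0\in\Int C$, the inclusion $B(x_0,r)\subset C$ (where $r=\dist(x_0,\partial C)>0$) implies $B(x_0+z,r)\subset C$ for all $z\in C$, hence $\dist(y,\partial C)\geq r$ for every $y\in x_0+C\supset K$; this contradicts $C$-asymptoticity of $K$, whose boundary is unbounded yet must satisfy $\dist(y,\partial C)\to 0$ as $|y|\to\infty$.

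The main obstacle is forcing $x_0\in\Int C$ rather than merely $x_0\in C$ (the limit could degenerate to $x_0\in\partial C$ for poorly chosen sequences). This would be achieved by refining the sequence so that the outer normals of the $K_i$ at $x$ are collectively dense in $\overline\Omega$; the resulting intersection of supporting half-spaces at $x$ is then exactly $x+C$, giving $x_0=x\in\Int C$. Establishing such a refinement requires an auxiliary construction producing, for each $v\in\overline\Omega$ and each $\varepsilon>0$, a set $K\in C_x$ with $v$ in its normal cone at $x$ and $\Gamma_m(S_K)<\varepsilon$. Simply intersecting a given $K\in C_x$ of small $\Gamma_m$ with the half-space $\{\langle y-x,v\rangle\leq 0\}$ destroys $C$-asymptoticity in general, so a more delicate geometric perturbation—blending such a cut with a $C$-asymptotic approximation at infinity—is needed. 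This auxiliary perturbation argument is the crux of the proof.
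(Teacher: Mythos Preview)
The paper avoids your limit analysis and the ``dense normals'' obstacle entirely by inserting an unbounded weight into the sum: choose $K_i\in C_x$ with $i\,\Gamma_m(S_{K_i})<2^{-i}$ and set $\mu=\sum_i i\,S_{K_i}$, so that $\Gamma_m(\mu)<1$ still holds by linearity. If $S_K=\mu$ for some $C$-asymptotic $K$, then for each $i$ one has $S_K\ge i\,S_{K_i}=S_{i^{1/(n-1)}K_i}$ by the $(n-1)$-homogeneity of surface area measure, and the comparison principle gives $K\subset i^{1/(n-1)}K_i$. Because $x\in\partial K_i$ with $x\in\Int C$ and $K_i$ is a pseudo-cone with recession cone $C$, the open segment from $o$ to $x$ misses $K_i$; scaling, the open segment from $o$ to $i^{1/(n-1)}x$ misses $i^{1/(n-1)}K_i\supset K$. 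Letting $i\to\infty$, the whole ray $\{tx:t>0\}$ avoids $K$, which is impossible for a $C$-pseudo set with $x\in\Int C$. No subsequential limit, no normal-cone bookkeeping, no auxiliary perturbation is needed.

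Your argument as written also contains a false step independent of the obstacle you flag: the implication ``$S_{K^*}\mres\Omega=0$ forces $K^*=x_0+C$'' fails for $n\ge 3$, since a convex function can have vanishing Monge--Amp\`ere measure without being affine (with $C$ the closed positive orthant in $\R^3$, the set $K^*=C\cap\{x_1+x_2\ge 1\}$ is $C$-pseudo with its sole facet normal $(-1,-1,0)/\sqrt{2}\in\partial\Omega$, hence $S_{K^*}\mres\Omega=0$, yet $K^*$ is not a translate of $C$). Your route can in fact be salvaged differently: $K\subset K^*$ with $K$ $C$-asymptotic forces $h_{K^*}=0$ on $\partial\Omega$, so $K^*$ is itself $C$-asymptotic with zero surface area measure, whence $K^*=C$ by Theorem~\ref{thm uniqueness}; then $K_i^-(t)\to C^-(t)$ in Hausdorff metric with $x\in\Int C^-(t)$ already yields $x\in\Int K_i$ for large $i$, contradicting $x\in\partial K_i$. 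Thus the ``dense normals'' refinement you identify as the crux is unnecessary---but all of this is still considerably more work than the one-line scaling trick.
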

\begin{proof}
	Since the infimum is $0$, there exists a sequence of $C$-asymptotic sets $K_i$ such that 
	\begin{equation}
		\Gamma_m(iS_{K_i}) = i\Gamma_m(S_{K_i})<\frac{1}{2^i}.
	\end{equation}
	Set $\mu= \sum_{i} iS_{K_i}$. Since $\Gamma_m(\cdot)$ is linear, we have
	\begin{equation}
		\Gamma_m\left(\sum_{i\leq N}iS_{K_i}\right)=\sum_{i\leq N} \Gamma_m(iS_{K_i})<\sum_{i\leq N} \frac{1}{2^i}.
	\end{equation}
	Take the limit as $N\rightarrow \infty$. By applying the monotone convergence theorem to the left side, we conclude that
	\begin{equation}
		\Gamma_m(\mu)<\infty.
	\end{equation}
	
	It remains to show that it is impossible for $\mu$ to be the surface area measure of a $C$-asymptotic set. We argue by contradiction. Suppose $K$  is $C$-asymptotic and $S_K=\mu$. Then, by the definition of $\mu$ and the homogeneity of the surface area measure, for each Borel set $\eta\subset \Omega$, we have
	\begin{equation}
		S_K(\eta)=\mu(\eta) \geq iS_{K_i}(\eta) = S_{i^{\frac{1}{n-1}}K_i}(\eta).
	\end{equation}
	This, Theorem \ref{thm 10.9.1}, and Figalli \cite[Theorem 2.10]{MR3617963} imply that 
	$K\subset i^{\frac{1}{n-1}}K_i$. The fact that $K_i\in C_x$ now implies that the open line segment joining $o$ and $i^{\frac{1}{n-1}}x$ has empty intersection with $K$. Since this holds for any $i$, it implies that the ray pointing from the origin and passing through $x$ does not intersect with $K$. However, this is impossible since $K$ is $C$-asymptotic.
\end{proof}

The following theorem complements Lemma \ref{lemma 11.19.2}. Although we do not require the next result anywhere else in the current work, it might be of separate interest.
\begin{theorem}\label{thm 11.24.5}
	If $m>-1$ and $x\in \Int C$ are such that 
	\begin{equation}
		\inf_{K\in C_x} \Gamma_m(S_K)>0,
	\end{equation}
	then for every nonzero Borel measure $\mu$ on $\Omega$ such that $\Gamma_m(\mu)<\infty$, there exists a $C$-pseudo set $L$ such that $\mu=S_L$ on $\Omega$.
\end{theorem}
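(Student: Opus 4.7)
The plan is to adapt the approximation scheme used in Theorem \ref{thm 10.3.2}, with the positive-infimum hypothesis playing the role that the explicit two-dimensional geometry played in Lemma \ref{lemma 10.13.1}. First, I would observe that $\Gamma_m(\mu)<\infty$ with $m>-1$ forces $\mu(\omega_\alpha)<\infty$ for every $\alpha>0$ (otherwise the monotonicity of $\alpha\mapsto\mu(\omega_\alpha)$ would make the integral diverge near that $\alpha$), so $\mu$ is locally finite on $\Omega$. Setting $\mu_i:=\mu\mres\overline{\omega_{1/i}}$ for $i$ large gives finite Borel measures on $\Omega$, and Theorem \ref{thm 10.9.3} then supplies $C$-close sets $K_i\subset C$ with $S_{K_i}=\mu_i$. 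Since $\mu\neq 0$, we have $K_i\neq C$ (so $o\notin K_i$) for all sufficiently large $i$.

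The heart of the argument is a uniform bound on $\dist(o,K_i)$, extracted by a scaling trick that uses the hypothesis. Because $x\in\Int C$ and $K_i$ is $C$-asymptotic, the ray $\{tx:t\geq 0\}$ is eventually contained in $K_i$; since $o\notin K_i$ and $K_i$ is closed, there is a unique smallest $t_i>0$ with $t_ix\in\partial K_i$. Scaling preserves the $C$-asymptotic property (both $\rec K$ and the condition $\dist(y,\partial C)\to 0$ as $|y|\to\infty$ are scale-invariant), so $L_i:=t_i^{-1}K_i$ is $C$-asymptotic with $x\in\partial L_i$; that is, $L_i\in C_x$. By the $(n-1)$-homogeneity of $S_K$ and the linearity of $\Gamma_m$,
\begin{equation}
    \Gamma_m(S_{L_i})=t_i^{-(n-1)}\Gamma_m(S_{K_i})=t_i^{-(n-1)}\Gamma_m(\mu_i)\leq t_i^{-(n-1)}\Gamma_m(\mu).
\end{equation}
Combining this with the hypothesis $\Gamma_m(S_{L_i})\geq c:=\inf_{K\in C_x}\Gamma_m(S_K)>0$ yields $t_i^{n-1}\leq\Gamma_m(\mu)/c$, so $t_i$, and hence $\dist(o,K_i)\leq t_i|x|$, is uniformly bounded in $i$.

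Granted this uniform distance bound, Lemma \ref{lemma 10.13.2} supplies a subsequence (still called $K_i$) converging to a $C$-pseudo set $L$. For each fixed $k$ and every $i\geq k$, the inclusion $\overline{\omega_{1/k}}\subset\overline{\omega_{1/i}}$ gives
\begin{equation}
    S_{K_i}\mres\overline{\omega_{1/k}}=\mu_i\mres\overline{\omega_{1/k}}=\mu\mres\overline{\omega_{1/k}},
\end{equation}
and Lemma \ref{lemma 10.11.2} concludes $S_L=\mu$ on $\Omega$. The main obstacle is really the conceptual one in the middle step: one has to recognize that the positive-infimum hypothesis is precisely a quantitative prohibition against the approximating sets running off to infinity, and the right device to extract this is the scaling $L_i=t_i^{-1}K_i$, which converts an escape $\dist(o,K_i)\to\infty$ into a degeneration $\Gamma_m(S_{L_i})\to 0$ in $C_x$ — exactly the obstruction Lemma \ref{lemma 11.19.2} identified as preventing solvability when the infimum vanishes.
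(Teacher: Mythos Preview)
Your proof is correct and follows essentially the same approach as the paper: approximate $\mu$ by finite restrictions $\mu_i$, solve via Theorem \ref{thm 10.9.3} to get $C$-close $K_i$, rescale so the boundary passes through $x$, and use the positive-infimum hypothesis to bound the scaling factor (hence $\dist(o,K_i)$) uniformly before applying Lemmas \ref{lemma 10.13.2} and \ref{lemma 10.11.2}. The paper first rescales $\mu$ by a constant $\lambda_0$ so as to force the scaling factors $r_i<1$, whereas you bound $t_i$ directly by $(\Gamma_m(\mu)/c)^{1/(n-1)}$ --- a purely cosmetic difference.
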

\begin{proof}
	Recall that the case when $\mu$ is a finite measure is solved in Theorem \ref{thm 10.9.3}. Hence, we only focus our attention on the case when $\mu$ is infinite. 
	
	Since $\Gamma_m(\lambda \mu) = \lambda \Gamma_m(\mu)$ and the infimum is positive, we may pick $\lambda_0>0$ so that
	\begin{equation}\label{eq 11.19.4}
		\Gamma_m(\lambda_0 \mu) <\inf_{K\in C_x} \Gamma_m(S_K). 
	\end{equation}
	For simplicity, write $\widetilde\mu=\lambda_0\mu$. Denote by $\mu_i = \widetilde {\mu}\mres \overline{\omega_{1/i}}$. Since $\Gamma_m(\widetilde{\mu})<\infty$, we conclude that $\mu_i$, for each given $i$, is a finite measure. Hence, Theorem \ref{thm 10.9.3} implies the existence of a unique $C$-close set $K_i$ such that $S_{K_i}=\mu_i$. Since $K_i$ is $C$-close, there exists a unique $r_i>0$ such that $r_i x\in \partial K_i$. This implies that $\frac{1}{r_i}K_i\in C_x$. Therefore, by \eqref{eq 11.19.4},
	\begin{equation}
			\inf_{K\in C_x} \Gamma_m(S_K)\leq \Gamma_m (S_{\frac{1}{r_i}K_i})= \left(\frac{1}{r_i}\right)^{n-1}\Gamma_m(\mu_i)\leq \left(\frac{1}{r_i}\right)^{n-1}\Gamma_m(\widetilde{\mu})<\left(\frac{1}{r_i}\right)^{n-1}\inf_{K\in C_x} \Gamma_m(S_K).
	\end{equation}
	This, in particular, suggests that $r_i<1$. Hence, $\dist(K_i,o)$ is uniformly bounded in $i$. By Lemma \ref{lemma 10.13.2}, there exists a subsequence $K_{i_j}$ and a $C$-pseudo set $K$ such that $K_{i_j}\rightarrow K$. Lemma \ref{lemma 10.11.2} now implies that $\widetilde\mu = S_K$ on $\Omega$. By homogeneity of surface area measure, we conclude that $\mu = S_{(1/\lambda_0)^{\frac{1}{n-1}}K}$. 
\end{proof}

\subsection{A nice $C$ where global integrability condition is not enough} In this section, we focus our attention on a special cone $C$ in dimension 3:
\begin{equation}\label{eq 11.19.5}
	C= \left\{(x,y,z)\in \mathbb{R}^3: x\leq -\sqrt{y^2+z^2}\right\}.
\end{equation}
The purpose of this section is to show that, even with this nice cone $C$, there is more to the surface area measures of $C$-asymptotic sets than that is captured by the global integrability condition \eqref{eq 11.19.2}.

We state the two main results of this section, with proofs to follow across the rest of the section.

\begin{prop}\label{prop 11.19.6}
	Let $n=3$ and $C$ be as in \eqref{eq 11.19.5}. If $m>\frac{3}{2}$, then there exists a Borel measure $\mu$ on $\Omega$ such that $J_m(\mu)<\infty$, but there is no $C$-asymptotic set $K$ such that $S_K=\mu$.
\end{prop}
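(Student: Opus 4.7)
The plan is to invoke Lemma~\ref{lemma 11.19.2}. Fixing $x=-e_1\in\Int C$, I would construct a family of $C$-asymptotic sets $\{K_{y^*}\}$ in $C_x$ parametrized by $y^*\in \Phi^{-1}(\Omega)$ (which for the specific cone in \eqref{eq 11.19.5} is the open unit disk in $\mathbb{R}^2$), and show that $\Gamma_{m_0}(S_{K_{y^*}})\to 0$ as $|y^*|\to 1^-$ whenever $m_0>\tfrac{1}{2}$. This will give $\inf_{K\in C_x}\Gamma_{m_0}(S_K)=0$ for every $m_0>\tfrac{1}{2}$. For any $m>\tfrac{3}{2}$ I would then pick $m_0\in(\tfrac{1}{2},\,m-1)$; Lemma~\ref{lemma 11.19.2} produces a Borel measure $\mu$ on $\Omega$ with $\Gamma_{m_0}(\mu)<\infty$ that is not realized as $S_K$ for any $C$-asymptotic $K$, and Lemma~\ref{lemma 11.19.3}(2) followed by the monotonicity of $J_m$ in $m$ gives $J_m(\mu)<\infty$, completing the proof.

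Under the correspondence of Section~\ref{section relation with PDE}, the point $x=-e_1$ corresponds to the constant affine function $L\equiv-1$ on $\Phi^{-1}(\Omega)$. For each $y^*\in\Phi^{-1}(\Omega)$, let $M_{y^*}$ denote the Minkowski gauge of $\Phi^{-1}(\Omega)$ with pole $y^*$, i.e., the unique convex, positively $1$-homogeneous function from $y^*$ with $M_{y^*}(y^*)=0$ and $M_{y^*}\equiv 1$ on $\partial\Phi^{-1}(\Omega)$, and set $u_{y^*}(y):=M_{y^*}(y)-1$. Then $u_{y^*}$ is convex and continuous on $\overline{\Phi^{-1}(\Omega)}$, vanishes on the boundary, and satisfies $u_{y^*}(y^*)=-1$; Theorem~\ref{thm 11.26.1} produces the associated $C$-asymptotic set $K_{y^*}$, and since $0\in \partial u_{y^*}(y^*)$, Lemma~\ref{lemma 10.9.1} places $x$ on $\partial K_{y^*}$, so $K_{y^*}\in C_x$.

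The heart of the argument is the Monge--Amp\`ere computation. Because $u_{y^*}$ is affine along every ray emanating from $y^*$, it is $C^1$ off $y^*$ and therefore $\monge_{u_{y^*}}$ is supported at $\{y^*\}$. The subdifferential $\partial u_{y^*}(y^*)$ coincides with $\partial M_{y^*}(y^*)$, namely the polar body $(\Phi^{-1}(\Omega)-y^*)^{\circ}=\{p\in\mathbb R^2:|p|\le 1+\langle p,y^*\rangle\}$, whose area is
\[
\int_0^{2\pi}\frac{d\psi}{2(1-|y^*|\cos\psi)^2} = \frac{\pi}{(1-|y^*|^2)^{3/2}},
\]
using the identity $\int_0^{2\pi}(1-a\cos\psi)^{-2}\,d\psi=2\pi(1-a^2)^{-3/2}$. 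Thus $\monge_{u_{y^*}}=\pi(1-|y^*|^2)^{-3/2}\delta_{y^*}$. Lemma~\ref{lemma 10.9.2} then converts this to $S_{K_{y^*}}=\pi\sqrt{1+|y^*|^2}(1-|y^*|^2)^{-3/2}\delta_{\Phi(y^*)}$, and Fubini rewrites $\Gamma_{m_0}(\nu)$ as $(m_0+1)^{-1}\int_\Omega\delta_{\partial\Omega}^{m_0+1}\,d\nu$. Combining with $\delta_{\partial\Omega}(\Phi(y^*))=\pi/4-\arctan|y^*|\asymp(1-|y^*|)/2$ as $|y^*|\to 1^-$ yields $\Gamma_{m_0}(S_{K_{y^*}})\asymp(1-|y^*|)^{m_0-1/2}$, which vanishes precisely when $m_0>\tfrac{1}{2}$. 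The only step needing care is confirming that $u_{y^*}$ is genuinely convex on $\overline{\Phi^{-1}(\Omega)}$ and that all of its Monge--Amp\`ere mass sits at $y^*$, but both follow from recognizing $u_{y^*}$ as an affinely shifted Minkowski gauge, a standard construct in convex analysis.
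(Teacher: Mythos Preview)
Your proof is correct and follows the same strategy as the paper: show $\inf_{K\in C_{u_*}}\Gamma_{m_0}(S_K)=0$ for every $m_0>\tfrac12$ using a one-parameter family of single-facet test bodies, then invoke Lemma~\ref{lemma 11.19.2} and Lemma~\ref{lemma 11.19.3}. Your test bodies $K_{y^*}$ coincide with the paper's family $\mathcal{E}_{u_*}=\{C\cap H^-(v,\langle v,u_*\rangle):v\in\Omega\}$ (both have surface area measure a point mass at $v=\Phi(y^*)$, so uniqueness forces equality); the only difference is presentational---you compute the facet area as the Lebesgue measure of the polar body $(\Phi^{-1}(\Omega)-y^*)^\circ$ via the Monge--Amp\`ere dictionary, whereas the paper computes the area of the elliptical cross-section in \eqref{eq 11.19.8} directly, arriving at the same $\asymp t^{-3/2}$ estimate.
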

\begin{prop}\label{prop 11.19.7}
	Let $n=3$ and $C$ be as in \eqref{eq 11.19.5}. If $m<2$, then there exists a $C$-asymptotic set $K$ such that $J_m(S_K)=\infty$.
\end{prop}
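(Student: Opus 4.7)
The plan is to use the PDE correspondence of Section~\ref{section relation with PDE} to construct $K$ from an explicit convex function on the domain $\Phi^{-1}(\Omega)$. First I would identify the domain: since $C$ is the round cone of half-angle $\pi/4$ about $-e_1$, the dual $C^\circ$ is the round cone of half-angle $\pi/4$ about $e_1$, so $\Omega$ is the open spherical cap of angular radius $\pi/4$ centered at $e_1$, and $\Phi^{-1}(\Omega)$ equals the open unit disk $D\subset\mathbb R^2$. A direct computation gives $\delta_{\partial\Omega}(\Phi(x)) = \pi/4 - \arctan|x|$, so $\Phi^{-1}(\omega_\alpha) = B_{r(\alpha)}(0)$ with $r(\alpha) = \tan(\pi/4 - \alpha)$, and $1 - r(\alpha) \sim 2\alpha$ as $\alpha \to 0^+$.

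Given $m \in (0, 2)$, fix any $\beta \in (0, 1 - m/2]$, so that $\beta \in (0, 1)$. Define
\[
u(x) := -(1 - |x|^2)^\beta \qquad \text{on } \overline D.
\]
Differentiation yields
\[
\nabla^2 u(x) = 2\beta(1-|x|^2)^{\beta-1}\, I + 4\beta(1-\beta)(1-|x|^2)^{\beta-2}\, x x^{\mathsf T},
\]
which is positive semi-definite on $D$ for $\beta \in (0, 1)$. Together with continuity on $\overline D$, this makes $u$ a convex function in $C(\overline D)$ vanishing on $\partial D$. Applying Theorem~\ref{thm 11.26.1} with $\widetilde\mu := \monge_u$ then produces a $C$-asymptotic set $K \subset C$ with $h_K(1, x) = u(x)$, and $S_K$ corresponding to $\monge_u$ via \eqref{eq 10.9.2}.

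The two eigenvalues of $\nabla^2 u$ are $2\beta(1-|x|^2)^{\beta-1}$ and $2\beta(1-|x|^2)^{\beta-2}\bigl(1 + (1-2\beta)|x|^2\bigr)$, so $\det \nabla^2 u(x) \sim c_\beta\,(1-|x|^2)^{2\beta-3}$ as $|x| \to 1^-$ for some positive constant $c_\beta$. Integrating in polar coordinates gives $\monge_u(B_r(0)) \sim c_\beta'\,(1-r)^{2\beta-2}$ as $r \to 1^-$. By Lemma~\ref{lemma 10.9.2}, $S_K(\omega_\alpha) = \int_{B_{r(\alpha)}(0)} \sqrt{1+|x|^2}\, d\monge_u(x)$, which is comparable to $\monge_u(B_{r(\alpha)}(0))$ because $\sqrt{1+|x|^2} \in [1, \sqrt 2]$ on $\overline D$. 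Combined with $1 - r(\alpha) \sim 2\alpha$, this yields $S_K(\omega_\alpha) \gtrsim \alpha^{2\beta-2}$ for small $\alpha > 0$.

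Consequently $J_m(S_K) \gtrsim \int_0^{\delta} \alpha^{(2\beta-2)/m}\, d\alpha$, which diverges precisely when $(2\beta - 2)/m \leq -1$, i.e., $\beta \leq 1 - m/2$, as arranged. Hence $J_m(S_K) = \infty$. The construction is entirely explicit; the only step requiring any genuine care is verifying convexity of $u$ on the whole closed disk, and this is immediate from the positivity of both Hessian eigenvalues displayed above.
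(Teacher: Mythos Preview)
Your proof is correct and takes a genuinely different route from the paper's.

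The paper proves the proposition by first establishing the auxiliary Lemma~\ref{lemma 11.19.11}: for $m<1$ it constructs $K$ as a countable intersection $\bigcap_i A(\alpha_i,2^{-i}u_*)$ of explicit $C$-full ``truncated cone'' sets, with the parameters $\alpha_i$ and the cut-off heights $r_i$ chosen inductively so that the boundary of $K$ contains, in each height slab, a piece whose surface area measure is concentrated on a single circle $\{\delta_{\partial\Omega}=\alpha_i\}$ and contributes at least $1$ to $\Gamma_m$. Verifying that $K$ is $C$-asymptotic requires a separate support-function estimate. Finally, Lemma~\ref{lemma 11.19.3}(1) is invoked to pass from $\Gamma_m=\infty$ to $J_{m'}=\infty$ for the desired $m'<2$.

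Your approach bypasses all of this: by exploiting the rotational symmetry of $C$ and the identification $\Phi^{-1}(\Omega)=D$, you write down a single smooth radial support function $h_K(1,x)=-(1-|x|^2)^\beta$ whose Monge--Amp\`ere measure has an explicit, easily integrated density. The $C$-asymptotic conclusion comes for free from Theorem~\ref{thm 11.26.1}, and the growth $S_K(\omega_\alpha)\asymp\alpha^{2\beta-2}$ is obtained by a two-line eigenvalue computation. No inductive construction, no $\Gamma_m$ detour, no separate boundary analysis. The resulting $K$ is also qualitatively different: its surface area measure is absolutely continuous, whereas the paper's example has $S_K$ concentrated on a countable union of circles.

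What the paper's approach buys is coherence with the rest of Section~\ref{section higher dim}: the same $\Gamma_m$ functional and the same cut-cone sets $A(\alpha,x)$ drive both Propositions~\ref{prop 11.19.6} and~\ref{prop 11.19.7}, so the machinery is built once and used twice. Your construction is more economical for this single proposition but would not obviously adapt to prove Proposition~\ref{prop 11.19.6}.
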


\begin{remark}
	Proposition \ref{prop 11.19.7} implies that a global integrability condition stronger than \eqref{eq 11.19.2} cannot be a necessary condition whereas Proposition \ref{prop 11.19.6} implies that the conjectured necessary condition \eqref{eq 11.19.2} alone cannot be a sufficient condition.
\end{remark}

The rest of the section is dedicated to proving the two propositions.

We pick $u_*=(-1,0,0)$ and denote by $\mathcal{E}_{u_*}$ the subclass of $C_{u_*}$ given by 
\begin{equation}
	\mathcal{E}_{u_*}=\{C\cap H^-(v, \langle v,u_*\rangle): v\in \Omega\}.
\end{equation}
The set $\mathcal{E}_{u_*}$ contains only of those sets in $C_{u_*}$ with only one facet that crosses the point $u_*$. 
Recall that $u_*\in \Int C$ and $-u_*\in \Int \Omega$. 

\begin{lemma}\label{lemma 11.19.1}
	Let $n=3$ and $C$ be as in \eqref{eq 11.19.5}. If $m>\frac{1}{2}$, then 
	\begin{equation}
		\inf_{K\in \mathcal{E}_{u_*}} \Gamma_m(S_K)=0.
	\end{equation}
\end{lemma}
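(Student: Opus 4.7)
The plan is to construct an explicit one-parameter family $\{K_\theta\}\subset\mathcal{E}_{u_*}$ such that $\Gamma_m(S_{K_\theta})\to 0$ as the cutting hyperplane tilts toward being parallel to a generating line of $\partial C$. Exploiting the rotational symmetry of $C$ about the axis $\mathbb{R} u_*$, I would without loss of generality take cutting normal $v_\theta=(\cos\theta,\sin\theta,0)\in\Omega$ for $\theta\in[0,\pi/4)$ and set $K_\theta:=C\cap H^-(v_\theta,\langle v_\theta,u_*\rangle)\in\mathcal{E}_{u_*}$. For the specific $C$ in \eqref{eq 11.19.5}, $\Omega$ is the spherical cap $\{v\in S^2:v_1>1/\sqrt{2}\}$, so $\alpha_\theta:=\delta_{\partial\Omega}(v_\theta)=\pi/4-\theta$, and the $v_\theta$ traces a geodesic in $\Omega$ running to $\partial\Omega$.

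The key geometric reduction is that the outer unit normal to $\partial C$ at the smooth point $(-s,s\cos\phi,s\sin\phi)$ equals $(1,\cos\phi,\sin\phi)/\sqrt{2}$, whose first coordinate is precisely $1/\sqrt{2}$. Thus the Gauss image of the conical part of $\partial K_\theta$ lies entirely on $\partial\Omega$, and since the curve along which the flat facet meets the conical surface is one-dimensional and hence contributes zero $\mathcal{H}^2$-mass to $S_{K_\theta}$, the restriction to $\Omega$ is a single atom:
\begin{equation}
S_{K_\theta} \mres \Omega = \mathcal{H}^2(F_\theta)\,\delta_{v_\theta}, \qquad F_\theta:=C\cap H(v_\theta,\langle v_\theta,u_*\rangle).
\end{equation}

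The remaining task is to estimate $\mathcal{H}^2(F_\theta)$. Parameterizing the cutting hyperplane via $u_*+s(-\sin\theta,\cos\theta,0)+t(0,0,1)$ and substituting into $x_1^2\geq x_2^2+x_3^2$ gives the planar region $\{(s,t):\cos(2\theta)\,s^2-2s\sin\theta+t^2\leq 1\}$, with the auxiliary sign constraint $x_1\leq 0$ automatic. Completing the square exhibits this as an ellipse with semi-axes $\cos\theta/\cos(2\theta)$ and $\cos\theta/\sqrt{\cos(2\theta)}$, so
\begin{equation}
\mathcal{H}^2(F_\theta)=\frac{\pi\cos^2\theta}{\cos^{3/2}(2\theta)}=O(\alpha_\theta^{-3/2})
\end{equation}
as $\alpha_\theta\to 0^+$, using $\cos(2\theta)=\sin(2\alpha_\theta)\sim 2\alpha_\theta$.

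Finally, since $v_\theta\in\omega_\alpha$ iff $\alpha<\alpha_\theta$, the functional collapses to
\begin{equation}
\Gamma_m(S_{K_\theta})=\mathcal{H}^2(F_\theta)\int_0^{\alpha_\theta}\alpha^m\,d\alpha=\frac{\mathcal{H}^2(F_\theta)\,\alpha_\theta^{m+1}}{m+1}=O(\alpha_\theta^{\,m-1/2}),
\end{equation}
which tends to $0$ as $\theta\nearrow\pi/4$ precisely when $m>1/2$, yielding the claimed infimum. The main technical obstacle is the explicit ellipse-area calculation; the rotational reduction at the outset is what makes it tractable, collapsing the cross-section to a two-variable quadratic amenable to completion of the square.
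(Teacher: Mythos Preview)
Your proof is correct and takes essentially the same approach as the paper's: both pick the one-parameter family $K(v)=C\cap H^-(v,\langle v,u_*\rangle)$ with $v$ in the $xy$-plane tending to $\partial\Omega$, observe that $S_{K}$ on $\Omega$ is a single atom at $v$, compute the area of the elliptical facet to be $O(\alpha^{-3/2})$ as $\alpha=\delta_{\partial\Omega}(v)\to 0$, and conclude $\Gamma_m(S_K)=O(\alpha^{m-1/2})\to 0$. Your presentation is slightly more explicit, carrying out the completion-of-the-square and verifying the sign constraint $x_1\le 0$ explicitly, but the argument is the same.
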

\begin{proof}
	Pick an arbitrary $v\in \Omega$ and consider the $C$-full set
	\begin{equation}
		K(v) = C\cap H^-(v, \langle v,u_*\rangle).
	\end{equation}
	For simplicity, we shall simply write $K(v)$ as $K$. We intend to estimate
	\begin{equation}
		\Gamma_m(S_K)=\int_{0}^{\frac{\pi}{2}} S_K(\omega_\alpha) \alpha^m d\alpha.
	\end{equation}
	Since revolving $K$ with respect the negative $x$-axis does not change the value of $\Gamma_m(S_K)$, we may assume without loss of generality that $v$ belongs to the $x$-$y$ plane and 
	\begin{equation}
		v=(\cos (\pi/4-t),\sin (\pi/4-t),0),
	\end{equation}
	where $t=\delta_{\partial \Omega}(v)>0$. Note that the set $K$ contains only one facet with normal $v$ and the facet is given by
	\begin{equation}\label{eq 11.19.8}
		\begin{cases}
			x\leq -\sqrt{y^2+z^2},\\
			x=\frac{\cos(3\pi/4-t)}{\sin (3\pi/4-t)}y-1.
		\end{cases}
	\end{equation}
	A straightforward computation (involving finding the area of the ellipse generated by \eqref{eq 11.19.8}) shows the existence of $c_0, c_1>0$ such that 
	\begin{equation}
		\mathcal{H}^2(\tau_{K}(\{v\}))\leq \frac{c_0}{\left(1-\frac{\cos^2(3\pi/4-t)}{\sin^2(3\pi/4-t)}\right)^\frac{3}{2}}\leq c_1t^{-\frac{3}{2}}
.	\end{equation}
	when $t>0$ is sufficiently small. Therefore, for some $c_2>0$,
	\begin{equation}
		\Gamma_m(S_K) = S_K(\omega_t) \int_0^t\alpha^m d\alpha \leq c_1 t^{-\frac{3}{2}} \frac{1}{m+1} t^{m+1}\leq c_2 t^{m-\frac{1}{2}}.
	\end{equation}
	Since $m>\frac{1}{2}$, as $t\rightarrow 0$, we have $\Gamma_m(S_K)\rightarrow 0$. (Recall that $K=K(v)$ and $t=\delta_{\partial \Omega}(v)$). The desired result immediately follows.
\end{proof}

We may now prove Proposition \ref{prop 11.19.6}.

\begin{proof}[Proof of Proposition \ref{prop 11.19.6}] 
	Let $\varepsilon_0>0$ be such that $m-\varepsilon_0>\frac{3}{2}$. By Lemma \ref{lemma 11.19.1} and the definition of $\mathcal{E}_{u_*}$, we have
	\begin{equation}
		\inf_{K\in C_{u_*}} \Gamma_{m-1-\varepsilon_0}(S_K) = \inf_{K\in \mathcal{E}_{u_*}} \Gamma_{m-1-\varepsilon_0}(S_K)=0.
	\end{equation}
	Lemma \ref{lemma 11.19.2} now implies the existence of a Borel measure $\mu$ such that $\Gamma_{m-1-\varepsilon_0}(\mu)<\infty$ but $\mu$ is not the surface area measure of any $C$-asymptotic set $K$. The proof is complete once we use Lemma \ref{lemma 11.19.3} to conclude that $J_m(\mu)<\infty$.
\end{proof}

For each sufficiently small $\alpha>0$ and $x\in \Int C$, we denote
\begin{equation}
	A(\alpha, x) = \left(\bigcap_{\{v\in \Omega: \delta_{\partial \Omega}(v)=\alpha\}} H^-(v, \langle x, v\rangle)\right) \cap C.
\end{equation}
Note that since $A(\alpha, x)$ is $C$-full and $x\in \partial A(\alpha,x)$, we conclude that $A(\alpha, x)\in C_x$. Also simple from the definition of $A(\alpha, x)$ is the fact that for every $t>0$, we have
\begin{equation}\label{eq 11.19.9}
	A_{\alpha, tx}=tA_{\alpha, x}.
\end{equation}

We first prove the following estimate on $\Gamma_m(S_{A(\alpha, u_*)})$. 
\begin{lemma}
	Let $n=3$, $C$ be as in \eqref{eq 11.19.5}, and $u_*=(-1,0,0)$. If $m<1$, then 
	\begin{equation}
		\lim_{\alpha\rightarrow 0} \Gamma_m(S(A_{\alpha, u_*}))=\infty.
	\end{equation}
\end{lemma}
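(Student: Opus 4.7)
The plan is to exploit the rotational symmetry of $C$ about the $x$-axis and compute $S_{A(\alpha,u_*)}$ essentially explicitly. First I would identify the locus $\{v\in\Omega:\delta_{\partial\Omega}(v)=\alpha\}$ as the circle
\[v_\phi=\bigl(\cos(\pi/4-\alpha),\,\sin(\pi/4-\alpha)\cos\phi,\,\sin(\pi/4-\alpha)\sin\phi\bigr),\quad\phi\in[0,2\pi),\]
and observe that every hyperplane $H\bigl(v_\phi,\langle u_*,v_\phi\rangle\bigr)$ passes through $u_*=(-1,0,0)$. A standard envelope computation (solving the hyperplane equation together with its $\phi$-derivative) then yields
\[\bigcap_{\phi\in[0,2\pi)}H^-\bigl(v_\phi,\langle u_*,v_\phi\rangle\bigr)=\Bigl\{x\in\mathbb{R}^3:\;x_1+\tan(\pi/4-\alpha)\sqrt{x_2^2+x_3^2}\le -1\Bigr\},\]
a half-space bounded by a cone of revolution with apex $u_*$, so $A(\alpha,u_*)$ is obtained by intersecting this half-space with $C$.

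Next I would describe $\partial A(\alpha,u_*)$ carefully. It splits into two rotationally symmetric faces meeting along the circle $\sqrt{x_2^2+x_3^2}=\rho_\alpha:=1/(1-\tan(\pi/4-\alpha))$: a portion of $\partial C$ whose outer normals all lie in $\partial\Omega$ (and so contribute nothing to $S_{A(\alpha,u_*)}$ on $\Omega$), together with a ``cap'' on the envelope cone --- a right circular cone with apex $u_*$ and base radius $\rho_\alpha$ --- whose outer normals sweep out precisely the circle $\{v:\delta_{\partial\Omega}(v)=\alpha\}$. The apex $u_*$ and the ridge circle are $\mathcal H^2$-null, hence do not contribute to $S_{A(\alpha,u_*)}$ either. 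Using the lateral-surface formula $\pi r\ell$ with $r=\rho_\alpha$ and $\ell=\rho_\alpha\sec(\pi/4-\alpha)$ gives
\[S_{A(\alpha,u_*)}(\Omega)=\pi\rho_\alpha^{2}\sec(\pi/4-\alpha)=\frac{\pi\sec(\pi/4-\alpha)}{\bigl(1-\tan(\pi/4-\alpha)\bigr)^2}.\]
Since $1-\tan(\pi/4-\alpha)\sim 2\alpha$ as $\alpha\to 0^+$, this total mass grows like $c_0\alpha^{-2}$ for some constant $c_0>0$.

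Finally, because the entire measure $S_{A(\alpha,u_*)}$ is concentrated on the single circle $\{\delta_{\partial\Omega}=\alpha\}$, one has $S_{A(\alpha,u_*)}(\omega_\beta)=S_{A(\alpha,u_*)}(\Omega)$ for $\beta<\alpha$ and $0$ otherwise, so
\[\Gamma_m\bigl(S_{A(\alpha,u_*)}\bigr)=S_{A(\alpha,u_*)}(\Omega)\int_0^\alpha\beta^{m}\,d\beta=\frac{\alpha^{m+1}}{m+1}\,S_{A(\alpha,u_*)}(\Omega)\sim\frac{c_0}{m+1}\,\alpha^{m-1},\]
which tends to $+\infty$ as $\alpha\to 0^+$ precisely when $m<1$. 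The main technical obstacle is the envelope identification and the careful accounting of which boundary pieces of $A(\alpha,u_*)$ actually contribute to $S_{A(\alpha,u_*)}$ on the open set $\Omega$; once those are handled, the asymptotic estimate is immediate.
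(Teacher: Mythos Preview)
Your argument is correct and follows essentially the same route as the paper: you show that $S_{A(\alpha,u_*)}$ is concentrated on the circle $\{\delta_{\partial\Omega}=\alpha\}$, compute its total mass as $\pi\rho_\alpha^{2}\sec(\pi/4-\alpha)$ with $\rho_\alpha=1/(1-\tan(\pi/4-\alpha))$, observe the $\alpha^{-2}$ blow-up, and conclude $\Gamma_m\sim c\,\alpha^{m-1}\to\infty$ for $m<1$. The only difference is cosmetic: the paper states the surface-area formula as the result of a ``straightforward computation'' and uses it only as a lower bound, whereas you spell out the envelope identification and the lateral-surface-area calculation explicitly and obtain an equality for $\Gamma_m$.
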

\begin{proof}
	Note that by the definition of $A(\alpha, u_*)$, its surface area measure $S(A_{\alpha, u_*})$ is concentrated on $\{v\in \Omega: \delta_{\partial \Omega}(v)=\alpha\}$. From a direct computation, the surface area of $A(\alpha, u_*)$ inside $\Int C$ is given by 
	\begin{equation}
		\pi \left(\frac{1}{1-\tan(\pi/4-\alpha)}\right)^2 \frac{1}{\cos (\pi/4-\alpha)}.
	\end{equation}
	Therefore, there exists $c_0>0$ such that 
	\begin{equation}
		\begin{aligned}
			\Gamma_{m}(S(A_{\alpha, u_*}))=\int_{0}^\alpha S_{A(\alpha, u_*)}(\omega_t) t^{m}dt\geq c_0 \alpha^{-2} \int_0^\alpha t^m dt=\frac{c_0}{m+1} \alpha^{m-1}\rightarrow \infty,
		\end{aligned}
	\end{equation}
	since $m<1$.
\end{proof}

By \eqref{eq 11.19.9}, an immediate corollary is the following.
\begin{coro}\label{coro 11.19.4}
	Let $n=3$, $C$ be as in \eqref{eq 11.19.5}, and $u_*=(-1,0,0)$. If $m<1$ and $t>0$, then 
	\begin{equation}
		\lim_{\alpha\rightarrow 0} \Gamma_m(S_{A(\alpha, tu_*)})=\infty.
	\end{equation}
\end{coro}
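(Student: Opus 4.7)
The plan is to deduce the corollary directly from the preceding lemma via the scaling identity \eqref{eq 11.19.9}. Specifically, $A(\alpha, tu_*) = t\,A(\alpha, u_*)$, so it suffices to track how $\Gamma_m(S_K)$ transforms under the rescaling $K \mapsto tK$.

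First I would observe that the surface area measure of a $C$-pseudo set is homogeneous of degree $n-1$ under dilation: for $t>0$, one has $S_{tK} = t^{n-1} S_K$ (in the current setting $n=3$, so $S_{tK} = t^2 S_K$). This is immediate from $\tau_{tK}(\eta) = t\,\tau_K(\eta)$ together with the scaling behavior of $(n-1)$-dimensional Hausdorff measure. Next, since $\Gamma_m(\cdot)$ is linear in its measure argument, I get
\begin{equation}
\Gamma_m(S_{A(\alpha, tu_*)}) \;=\; \Gamma_m(S_{t\,A(\alpha, u_*)}) \;=\; \Gamma_m(t^{2}\,S_{A(\alpha, u_*)}) \;=\; t^{2}\,\Gamma_m(S_{A(\alpha, u_*)}).
\end{equation}

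Finally, I apply the previous lemma, which provides $\lim_{\alpha\to 0}\Gamma_m(S_{A(\alpha, u_*)}) = \infty$ under the assumption $m<1$. Since $t>0$ is a fixed positive constant, multiplication by $t^2$ preserves divergence, giving $\lim_{\alpha\to 0}\Gamma_m(S_{A(\alpha, tu_*)}) = \infty$, as required. There is no genuine obstacle here beyond verifying the degree-$(n-1)$ homogeneity of $S_K$ and the linearity of $\Gamma_m$; both are routine, which is why the authors flagged the statement as an immediate corollary.
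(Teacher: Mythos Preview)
Your proof is correct and follows exactly the route the paper indicates: use the scaling identity \eqref{eq 11.19.9} to write $A(\alpha, tu_*) = t\,A(\alpha, u_*)$, apply the degree-$2$ homogeneity of the surface area measure and the linearity of $\Gamma_m$, and then invoke the preceding lemma. This is precisely why the authors label the statement an immediate corollary.
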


Recall the notations $H^+(t), H^-(t), H(t), K^+(t), K^{-}(t)$, and $K(t)$ from Section \ref{section pseudo cones}.
For $0<t_1<t_2<t_3$ and $\alpha>0$, we write
\begin{equation}\label{eq 11.19.10}
	Q(\alpha, t_1, t_2, t_3)= (C+t_1u_*)\cap A(\alpha, t_2u_*)\cap H^+(t_3).
\end{equation}
Note that it is simple to see that $Q(\alpha, t_1, t_2, t_3)$ is $C$-pseudo. It is also simple to see from the definition that $S_{Q(\alpha, t_1, t_2, t_3)}$ is concentrated on $\{v\in \Omega: \delta_{\partial \Omega}(v)=\alpha\}\cup \{-u_*\}$.

\begin{lemma}\label{lemma 11.19.10}
	Let $n=3$, $C$ be as in \eqref{eq 11.19.5}, and $Q(\alpha, t_1, t_2, t_3)$ be as given in \eqref{eq 11.19.10} for some fixed $0<t_1<t_2<t_3$. If $m<1$, then 
	\begin{equation}
		\lim_{\alpha\rightarrow 0} \Gamma_m(S_{Q(\alpha, t_1, t_2, t_3)}\mres (\Omega\setminus \{-u_*\}))=\infty.
	\end{equation}
\end{lemma}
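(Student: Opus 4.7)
The plan is to carry out a direct geometric computation of $\Gamma_m(S_Q\mres(\Omega\setminus\{-u_*\}))$, following the same template as the preceding lemma on $A(\alpha,u_*)$: I will identify the piece of $\partial Q$ that contributes to this restricted measure, compute its $\mathcal{H}^2$-area, and then integrate. Working with $u_*=-e_1$, the latitude circle $\Lambda_\alpha:=\{v\in\Omega: \delta_{\partial\Omega}(v)=\alpha\}$ sits at $\{v_1=\cos(\pi/4-\alpha)\}$, and by the calculation in the preceding lemma, $\partial A(\alpha, t_2u_*)\cap \Int C$ is the right circular cone with apex $t_2u_*$, axis $-e_1$, and half-angle $\pi/4+\alpha$, whose outer unit normals sweep out $\Lambda_\alpha$.

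Slicing $Q$ by $H(t)=\{x_1=-t\}$ produces $\emptyset$ for $t<t_3$ and, for $t\geq t_3$, a disk of radius $r_Q(t)=\min\bigl(t-t_1,\ (t-t_2)\tan(\pi/4+\alpha)\bigr)$ (the $C$-constraint $r\leq t$ never binds). Setting
\[
	t^*(\alpha):=\frac{t_2\tan(\pi/4+\alpha)-t_1}{\tan(\pi/4+\alpha)-1},
\]
the $A(\alpha,t_2u_*)$-constraint binds on $[t_3, t^*(\alpha)]$ while the $C+t_1u_*$-constraint binds on $[t^*(\alpha),\infty)$, so $\partial Q$ decomposes into the flat facet $Q\cap H(t_3)$ with normal $-u_*$, a cone-frustum piece of $\partial A(\alpha,t_2u_*)$ with normals on $\Lambda_\alpha$, a piece of $\partial(C+t_1u_*)$ with normals on $\partial\Omega$, and a one-dimensional edge circle at height $t^*(\alpha)$. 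The facet is removed by the restriction to $\Omega\setminus\{-u_*\}$, the third piece is removed because $\partial\Omega\cap\Omega=\emptyset$, and the edge has $\mathcal{H}^2$-measure zero; hence $S_Q\mres(\Omega\setminus\{-u_*\})$ is supported on $\Lambda_\alpha$ with total mass $M_\alpha$ equal to the lateral area of the frustum, which by the classical formula is
\[
	M_\alpha=\pi\,\frac{\sin(\pi/4+\alpha)}{\cos^2(\pi/4+\alpha)}\,\bigl[(t^*(\alpha)-t_2)^2-(t_3-t_2)^2\bigr].
\]

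Combining $t^*(\alpha)-t_2=(t_2-t_1)/(\tan(\pi/4+\alpha)-1)$ with $\tan(\pi/4+\alpha)-1\sim 2\alpha$ and $\sin(\pi/4+\alpha)/\cos^2(\pi/4+\alpha)\to\sqrt{2}$ as $\alpha\to 0^+$ yields constants $c_0>0$ and $\alpha_0>0$ with $M_\alpha\geq c_0/\alpha^2$ for $0<\alpha<\alpha_0$. Since $\Lambda_\alpha\subset\omega_\beta$ exactly when $\beta<\alpha$, the definition of $\Gamma_m$ gives
\[
	\Gamma_m\bigl(S_Q\mres(\Omega\setminus\{-u_*\})\bigr)=\int_0^{\alpha}M_\alpha\,\beta^m\,d\beta=\frac{M_\alpha\,\alpha^{m+1}}{m+1}\geq \frac{c_0}{m+1}\,\alpha^{m-1},
\]
which tends to $+\infty$ as $\alpha\to 0^+$ because $m<1$. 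The only non-routine step is the geometric identification of the $A$-part of $\partial Q$ as the unique contributor; once that is pinned down, the rest reduces to the same cone-frustum area estimate used in the preceding lemma.
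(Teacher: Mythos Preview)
Your proof is correct and takes a genuinely different route from the paper's. The paper argues indirectly: it first proves the translation identity
\[
(C+t_1u_*)\cap A(\alpha,t_2u_*)=t_1u_*+A(\alpha,(t_2-t_1)u_*),
\]
invokes Corollary~\ref{coro 11.19.4} to conclude that $\Gamma_m$ of the surface area measure of this set diverges, and then disposes of the cutoff by $H^+(t_3)$ and the point mass at $-u_*$ via the crude bound $\mathcal{H}^2\big(\tau_{(\cdot)}(\Omega)\cap H^-(t_3)\big)\leq c_0(t_3)$ coming from compactness of $C^-(t_3)$. Your argument instead identifies the relevant boundary piece of $Q$ explicitly as a frustum of the cone $\partial A(\alpha,t_2u_*)$, computes its lateral area $M_\alpha$ in closed form, and reads off the $\alpha^{m-1}$ blow-up directly. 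The paper's approach is more modular, recycling the preceding corollary; yours is more self-contained and quantitatively sharper, since it yields the exact constant in the leading asymptotics of $M_\alpha$.

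Two small points worth making explicit. First, your decomposition tacitly assumes $t^*(\alpha)>t_3$, so that the frustum over $[t_3,t^*(\alpha)]$ is nonempty; this holds for all sufficiently small $\alpha$ because $t^*(\alpha)\to\infty$ as $\alpha\to 0^+$, and since you are taking a limit $\alpha\to 0$ this is harmless, but it deserves a sentence. Second, when you say the edge circles have $\mathcal{H}^2$-measure zero and therefore do not contribute to $S_Q$, note that the normals along an edge fill an arc in $\Omega$; the point is that $\tau_K$ of that arc is one-dimensional, hence has $\mathcal{H}^2$-measure zero, exactly as you say. Neither of these affects the validity of the argument.
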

\begin{proof}
	We first claim that
	\begin{equation}
		(C+t_1u_*)\cap A(\alpha, t_2u_*)=t_1u_*+A(\alpha, (t_2-t_1)u_*).
	\end{equation}
	Indeed, if $x\in t_1u_*+A(\alpha, (t_2-t_1)u_*)$, then $x-t_1u_*\in A(\alpha, (t_2-t_1)u_*)$, which by definition implies $x-t_1u_*\in C$ (or, equivalently, $x\in C+t_1u_*)$ and 
	\begin{equation}
		\langle x-t_1u_*, v\rangle \leq  \langle (t_2-t_1)u_*, v\rangle, \text{ for all } v\in \Omega \text{ such that } \delta_{\partial \Omega}(v)=\alpha,
	\end{equation}
	or, equivalently, 
	\begin{equation}
		\langle x, v\rangle \leq  \langle t_2u_*, v\rangle, \text{ for all } v\in \Omega \text{ such that } \delta_{\partial \Omega}(v)=\alpha.
	\end{equation}
	This implies that $x\in A(\alpha, t_2u_*)$. Hence, $(C+t_1u_*)\cap A(\alpha, t_2u_*)\supset t_1u_*+A(\alpha, (t_2-t_1)u_*)$. The other direction follows basically by reversing the argument.  Corollary \ref{coro 11.19.4} now implies
	\begin{equation}\label{eq 11.19.11}
		\lim_{\alpha\rightarrow 0} \Gamma_m(S_{(C+t_1u_*)\cap A(\alpha, t_2u_*)})=\lim_{\alpha\rightarrow 0} \Gamma_m(S_{t_1u_*+A(\alpha, (t_2-t_1)u_*)}))=\lim_{\alpha\rightarrow 0} \Gamma_m(S_{A(\alpha, (t_2-t_1)u_*)}))=\infty.
	\end{equation}
	
	Note that  $(C+t_1u_*)\cap A(\alpha, t_2u_*)\cap H^-(t_3) \subset  C^-(t_3)$, where the latter set is a bounded set. This implies the existence of $c_0(t_3)>0$ dependent only on $t_3$ such that 
	\begin{equation}
		\mathcal{H}^{2}(\tau_{(C+t_1u_*)\cap A(\alpha, t_2u_*)}(\Omega)\cap H^-(t_3))\leq c_0(t_3).
	\end{equation}
	Combining this with \eqref{eq 11.19.11}, we have
	\begin{equation}
		\lim_{\alpha\rightarrow 0} \Gamma_m(S_{Q(\alpha, t_1, t_2, t_3)})\geq \lim_{\alpha\rightarrow 0}\int_0^\frac{\pi}{2} \left(S_{(C+t_1u_*)\cap A(\alpha, t_2u_*)}(\omega_s)-c_0(t_3)|\omega_s|\right)s^mds=\infty.
	\end{equation}
	The desired result now follows from the observation that $S_{Q(\alpha, t_1, t_2, t_3)}$ is concentrated on $\{v\in \Omega: \delta_{\partial \Omega}(v)=\alpha\}\cup \{-u_*\}$ and that 
	\begin{equation}
		S_{Q(\alpha, t_1, t_2, t_3)}(\{-u_*\})\leq \mathcal{H}^2(C(t_3))<\infty.
	\end{equation}
\end{proof}

\begin{lemma}\label{lemma 11.19.11}
	Let $n=3$ and $C$ be as in \eqref{eq 11.19.5}. If $m<1$, then there exists a $C$-asymptotic set $K$ such that $\Gamma_m(S_K)=\infty$.
\end{lemma}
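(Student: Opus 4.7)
I would construct $K$ explicitly as a rotationally symmetric $C$-asymptotic set whose boundary profile is tuned so that the surface area measure grows at the right rate near $\partial\Omega$. Because $C=\{x_1\leq-\sqrt{x_2^2+x_3^2}\}$ is rotationally symmetric about the $x_1$-axis, with $\Omega=\{v\in S^2:\ v_1>1/\sqrt{2}\}$ and $\partial\Omega=\{v_1=1/\sqrt{2}\}$, it is natural to search for $K$ of the form $K_f:=\{(x_1,x_2,x_3):\ x_1\leq-f(\sqrt{x_2^2+x_3^2})\}$ for some convex nondecreasing $f:[0,\infty)\to[0,\infty)$ with $f(r)\geq r$ and $f'(r)\in[0,1)$. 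This route bypasses Lemma \ref{lemma 11.19.10} in favor of a completely explicit model.

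The first block of routine checks would verify the structural properties of $K_f$: convexity of $K_f$ follows from the fact that $f(\|\cdot\|)$ is the composition of a convex function with a convex nondecreasing one; the inclusion $K_f\subset C$ is equivalent to $f(r)\geq r$; the recession identity $\rec K_f=C$ follows from $f'\leq 1$ via the $1$-Lipschitz property of $f$ combined with the triangle inequality; and $K_f$ is $C$-asymptotic iff $f(r)-r\to 0$ as $r\to\infty$, which is obtained by directly estimating $\dist(x,\partial C)$ for $x\in\partial K_f$.

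For the central construction, since $m<1$ gives $2/(m+1)>1$, I would fix $p\in\bigl(1,\ \min\{2,\ 2/(m+1)\}\bigr)$ and set $f(r):=r+c_0(1+r)^{1-p}$ with $c_0:=1/(2(p-1))$. A direct differentiation shows $f'(r)=1-c_0(p-1)(1+r)^{-p}\in[1/2,1)$, $f''(r)=c_0p(p-1)(1+r)^{-p-1}>0$, and $f(r)-r=c_0(1+r)^{1-p}\to 0$, so all the requirements from the previous paragraph are met and $K:=K_f$ is a valid $C$-asymptotic set.

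To conclude, I would compute $\Gamma_m(S_K)$ from the parametrization $(r,\theta)\mapsto(-f(r),r\cos\theta,r\sin\theta)$ of $\partial K$. The outer unit normal there is $\nu(r,\theta)=(1,f'(r)\cos\theta,f'(r)\sin\theta)/\sqrt{1+f'(r)^2}$, whose spherical distance to $\partial\Omega$ equals $\pi/4-\arctan f'(r)$. Defining $r_\alpha$ by $f'(r_\alpha)=\tan(\pi/4-\alpha)$, the small-$\alpha$ expansion $c_0(p-1)(1+r_\alpha)^{-p}=2\alpha+O(\alpha^2)$ gives $r_\alpha\sim c_1\alpha^{-1/p}$; since the surface element on $\partial K$ is $r\sqrt{1+f'(r)^2}\,dr\,d\theta$, this yields $S_K(\omega_\alpha)=2\pi\int_0^{r_\alpha}r\sqrt{1+f'(r)^2}\,dr\geq\pi r_\alpha^2\geq c_2\alpha^{-2/p}$ for small $\alpha$. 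The choice $p<2/(m+1)$ translates into $m-2/p<-1$, so $\Gamma_m(S_K)\geq\int_0^{\varepsilon}c_2\alpha^{m-2/p}\,d\alpha=\infty$. The main obstacle is the simultaneous balancing in the choice of $p$: it must satisfy $p>1$ (so that $f(r)-r$ decays at infinity, making $K_f$ $C$-asymptotic) \emph{and} $p<2/(m+1)$ (so that the $\Gamma_m$ integral diverges), and the hypothesis $m<1$ is exactly what keeps this window nonempty.
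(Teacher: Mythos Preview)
Your proof is correct and takes a genuinely different route from the paper's. The paper builds $K$ as a countable intersection $K=\bigcap_{i}A(\alpha_i,2^{-i}u_*)$ of the ``circular truncations'' $A(\alpha,x)$, choosing the sequence $\alpha_i\to 0$ inductively via Lemma~\ref{lemma 11.19.10} so that the boundary of $K$ between heights $r_{j-1}$ and $r_j$ contributes at least $1$ to $\Gamma_m$; the $C$-asymptotic property is then verified by hand from the nested structure. Your construction instead produces a single smooth rotationally symmetric body $K_f$ with profile $f(r)=r+c_0(1+r)^{1-p}$, reads off the Gauss map explicitly, and reduces the divergence of $\Gamma_m(S_K)$ to the elementary integral $\int_0^\varepsilon\alpha^{m-2/p}\,d\alpha$. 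This is more elementary and shorter, bypassing Lemmas~\ref{lemma 11.19.10} and the $Q(\alpha,t_1,t_2,t_3)$ machinery entirely; the paper's approach, on the other hand, reuses the $A(\alpha,x)$ building blocks already set up for Proposition~\ref{prop 11.19.6} and yields an example with piecewise-flat boundary. One small remark: the upper bound $p<2$ in your choice of $p$ is not actually needed anywhere in the argument (only $1<p<2/(m+1)$ is used), and you are implicitly using $m>-1$ (which is built into the paper's definition of $\Gamma_m$) when asserting $2/(m+1)>1$; also note that at the tip $r=0$ the set $K_f$ has a genuine vertex with a cap of normals (since $f'(0^+)=1/2$), but this contributes zero $\mathcal H^2$-measure and does not affect your estimate of $S_K(\omega_\alpha)$ for small $\alpha$.
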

\begin{proof}
	By Lemma \ref{lemma 11.19.10}, there exists $\alpha_0>0$ sufficiently small so that 
	\begin{equation}
		\Gamma_m(S_{Q(\alpha_0, \frac{1}{2}, 1, 2)}\mres (\Omega\setminus \{-u_*\}))\geq 1.
	\end{equation}
	Since $A(\alpha_0, u_*)$ is $C$-full, there exists $r_0>2$ such that 
	\begin{equation}
		\tau_{A(\alpha_0, u_*)}(\Omega) \subset H^-(r_0).
	\end{equation}
	Using Lemma \ref{lemma 11.19.10} and the same argument again, there exist $0<\alpha_1<\min\{\alpha_0,1/2\}$ and $r_1>r_0$ such that 
	\begin{equation}
		\Gamma_m(S_{Q(\alpha_1, \frac{1}{4}, \frac{1}{2}, r_0)}\mres (\Omega\setminus \{-u_*\}))\geq 1,
	\end{equation}
	and 
	\begin{equation}
		\tau_{A(\alpha_1, \frac{1}{2}u_*)}(\Omega) \subset H^-(r_1).
	\end{equation}
	
	Repeating this argument, we have a decreasing sequence $\{\alpha_i\}$ with $\alpha_i<1/i$ and an increasing sequence $\{r_i\}$ such that 
	\begin{equation}\label{eq 11.19.17}
		\Gamma_m(S_{Q(\alpha_i, \frac{1}{2^{i+1}}, \frac{1}{2^i}, r_{i-1})}\mres (\Omega\setminus \{-u_*\}))\geq 1,
	\end{equation}
	and
	\begin{equation}\label{eq 11.19.12}
		\tau_{A(\alpha_i, \frac{1}{2^i}u_*)}(\Omega) \subset H^-(r_i).
	\end{equation}
	
	Set
	\begin{equation}
		K=\bigcap_{i=0}^\infty A(\alpha_i, \frac{1}{2^i}u_*).
	\end{equation}
It is simple to see that $K$ is a nonempty $C$-pseudo set. We will show that $K$ satisfies the desired properties.

By \eqref{eq 11.19.12} and the fact that $\{r_i\}$ is increasing, if $i\leq j-1$, then 
\begin{equation}
	\tau_{A(\alpha_i, \frac{1}{2^i}u_*)}(\Omega)\subset H^-(r_i)\subset H^-(r_{j-1}). 
\end{equation}
Consequently, we have
\begin{equation}\label{eq 11.19.13}
	\left(\bigcap_{i=0}^{j-1}A(\alpha_i, \frac{1}{2^i}u_*)\right) \cap H^+(r_{j-1})=C^+(r_{j-1}).
\end{equation}
On the other hand, if $i\geq j+1$, since $\frac{1}{2^i}u_*\in A(\alpha_i, \frac{1}{2^i}u_*)$ and that $A(\alpha_i, \frac{1}{2^i}u_*)$ is $C$-full, we have $\frac{1}{2^{j+1}}u_*\in A(\alpha_i, \frac{1}{2^i}u_*)$ and
\begin{equation}
	\frac{1}{2^{j+1}}u_*+C\subset A(\alpha_i, \frac{1}{2^i}u_*).
\end{equation}
Consequently,
\begin{equation}\label{eq 11.19.14}
	\frac{1}{2^{j+1}}u_*+C\subset \bigcap_{i=j+1}^\infty A(\alpha_i, \frac{1}{2^i}u_*).
\end{equation}
Combining \eqref{eq 11.19.13} and \eqref{eq 11.19.14}, we have 
\begin{equation}\label{eq 11.19.15}
	K^+(r_{j-1})\cap (C+\frac{1}{2^{j+1}}u_*)=Q(\alpha_{j}, \frac{1}{2^{j+1}}, \frac{1}{2^j}, r_{j-1}).
\end{equation}

Let $N_0>0$ be such that $-u_*\notin \{v\in \Omega: \delta_{\partial \Omega}(v)=\alpha_j\}$ for every $j\geq N_0$. For such $j$, it follows from the definition of surface area measure  and \eqref{eq 11.19.15} that 
\begin{equation}
\begin{aligned}
	S_K(\{v\in \Omega: \delta_{\partial \Omega}(v)=\alpha_j\}) &\geq  S_{K^+(r_{j-1})\cap (C+\frac{1}{2^{j+1}}u_*)}(\{v\in \Omega: \delta_{\partial \Omega}(v)=\alpha_j\})\\
	&= S_{Q(\alpha_{j}, \frac{1}{2^{j+1}}, \frac{1}{2^j}, r_{j-1})}(\{v\in \Omega: \delta_{\partial \Omega}(v)=\alpha_j\}).
\end{aligned}
\end{equation}
Using the fact that $S_{Q(\alpha, t_1, t_2, t_3)}$ is concentrated on $\{v\in \Omega: \delta_{\partial \Omega}(v)=\alpha\}\cup \{-u_*\}$ and the fact that each $\alpha_j$ is distinct, we conclude that for each Borel set $\eta\subset \Omega$,
\begin{equation}
	S_K\mres (\Omega\setminus \{-u_*\})(\eta) \geq \sum_{j\geq N_0} S_{Q(\alpha_{j}, \frac{1}{2^{j+1}}, \frac{1}{2^j}, r_{j-1})}\mres (\Omega\setminus \{-u_*\})(\eta).
\end{equation}
Thus, according to the definition of $\Gamma_m$, we have
\begin{equation}
\begin{aligned}
	\Gamma_m(S_K)\geq \Gamma_m(S_K\mres (\Omega\setminus \{-u_*\}))\geq \Gamma_m\left(\sum_{j\geq N_0} S_{Q(\alpha_{j}, \frac{1}{2^{j+1}}, \frac{1}{2^j}, r_{j-1})}\mres (\Omega\setminus \{-u_*\})\right)=\infty,
\end{aligned}
\end{equation}
where the last equality is due to the linearity of $\Gamma_m$ and \eqref{eq 11.19.17}.

It only remains to show that $K$ is $C$-asymptotic.

We first claim that for each $v\in \partial \Omega$ and fixed $j>0$, we have 
\begin{equation}\label{eq 11.19.19}
	h_{(C+\frac{1}{2^j}u_*)\cap H^+(r_j)}(v)\geq h_{C+\frac{1}{2^j}u_*}(v).
\end{equation}
Indeed, by definition of support function, there exists a sequence of $x_i\in C+\frac{1}{2^j}u_*$ such that 
\begin{equation}\label{eq 11.19.18}
	h_{C+\frac{1}{2^j}u_*}(v)=\lim_{i\rightarrow \infty} \langle x_i, v\rangle.
\end{equation}
Since $v\in \partial \Omega$, there exists $o\neq y_0\in \partial C$ such that $\langle y_0, v\rangle =0$. By the choice of $u_*$, we have $\langle y_0, -u_*\rangle<0$. Therefore, for each $i$, we may find sufficiently large $\lambda_i>0$ such that $x_i+\lambda_iy_0\in H^+(r_j)$. Since $C+\frac{1}{2^j}u_*$ is $C$-pseudo, this implies that $x_i+\lambda_iy_0\in (C+\frac{1}{2^j}u_*)\cap H^+(r_j)$. Hence, by definition of support function, 
\begin{equation}
	\begin{aligned}
		h_{(C+\frac{1}{2^j}u_*)\cap H^+(r_j)}(v)\geq \langle x_i+\lambda_iy_0, v\rangle=\langle x_i, v\rangle.
	\end{aligned}
\end{equation}
Taking the limit in $i$ and by \eqref{eq 11.19.18}, we conclude that \eqref{eq 11.19.19} holds.

Note that by the definition of $K$, \eqref{eq 11.19.13}, and \eqref{eq 11.19.14}, we have
\begin{equation}
	K^+(r_j)= \bigcap_{i=j}^\infty A(\alpha_j, \frac{1}{2^j}u_*)\cap H^+(r_j) \supset \left(C+\frac{1}{2^j}u_*\right)\cap H^+(r_j).
\end{equation}
This, the definition of support function, and \eqref{eq 11.19.19}, imply that for each $v\in \partial \Omega$, we have
\begin{equation}
	0=h_C(v)\geq h_K(v)\geq h_{K^+(r_j)}(v)\geq h_{\left(C+\frac{1}{2^j}u_*\right)\cap H^+(r_j)}(v)=h_{C+\frac{1}{2^j}u_*}(v)\geq -\frac{1}{2^j}.
\end{equation}
Taking the limit as $j\rightarrow \infty$, we conclude that $h_{K}= 0$ on $\partial \Omega$. Hence $K$ is $C$-asymptotic.
\end{proof}

Proposition \ref{prop 11.19.7} now immediately follows from Lemmas \ref{lemma 11.19.11} and \ref{lemma 11.19.3}.

\begin{bibdiv}
\begin{biblist}

\bib{MR0001597}{article}{
      author={Aleksandrov, A.~D.},
       title={\"{U}ber die {O}berfl\"{a}chenfunktion eines konvexen
  {K}\"{o}rpers. ({B}emerkung zur {A}rbeit ``{Z}ur {T}heorie der gemischten
  {V}olumina von konvexen {K}\"{o}rpern'')},
        date={1939},
     journal={Rec. Math. N.S. [Mat. Sbornik]},
      volume={6(48)},
       pages={167\ndash 174},
}

\bib{MR2127379}{book}{
      author={Alexandrov, A.~D.},
       title={Convex polyhedra},
      series={Springer Monographs in Mathematics},
   publisher={Springer-Verlag, Berlin},
        date={2005},
        note={Translated from the 1950 Russian edition by N. S. Dairbekov, S.
  S. Kutateladze and A. B. Sossinsky, With comments and bibliography by V. A.
  Zalgaller and appendices by L. A. Shor and Yu. A. Volkov},
}

\bib{MR4588159}{article}{
      author={Artstein-Avidan, S.},
      author={Sadovsky, S.},
      author={Wyczesany, K.},
       title={A zoo of dualities},
        date={2023},
     journal={J. Geom. Anal.},
      volume={33},
      number={8},
       pages={Paper No. 238, 40},
}

\bib{MR0096902}{article}{
      author={Bakelman, I.},
       title={On the theory of {M}onge-{A}mp\`ere's equations},
        date={1958},
     journal={Vestnik Leningrad. Univ. Ser. Mat. Meh. Astronom.},
      volume={13},
      number={1},
       pages={25\ndash 38},
}

\bib{MR0827083}{article}{
      author={Bakelman, I.},
       title={The boundary value problems for nonlinear elliptic equations and
  the maximum principle for {E}uler-{L}agrange equations},
        date={1986},
     journal={Arch. Rational Mech. Anal.},
      volume={93},
      number={3},
       pages={271\ndash 300},
}

\bib{MR1305147}{book}{
      author={Bakelman, I.},
      editor={Taliaferro, Steven~D.},
       title={Convex analysis and nonlinear geometric elliptic equations},
   publisher={Springer-Verlag, Berlin},
        date={1994},
        note={With an obituary for the author by William Rundell},
}

\bib{MR4654477}{incollection}{
      author={B\"or\"oczky, K.~J.},
       title={The logarithmic {M}inkowski conjecture and the
  {$L_p$}-{M}inkowski problem},
        date={2023},
   booktitle={Harmonic analysis and convexity},
      series={Adv. Anal. Geom.},
      volume={9},
   publisher={De Gruyter, Berlin},
       pages={83\ndash 118},
}

\bib{MR3415694}{article}{
      author={B{\"o}r{\"o}czky, K.~J.},
      author={Henk, M.},
       title={Cone-volume measure of general centered convex bodies},
        date={2016},
     journal={Adv. Math.},
      volume={286},
       pages={703\ndash 721},
}

\bib{MR3825606}{article}{
      author={B\"{o}r\"{o}czky, K.~J.},
      author={Henk, M.},
      author={Pollehn, H.},
       title={Subspace concentration of dual curvature measures of symmetric
  convex bodies},
        date={2018},
     journal={J. Differential Geom.},
      volume={109},
      number={3},
       pages={411\ndash 429},
}

\bib{MR3037788}{article}{
      author={B\"{o}r\"{o}czky, K.~J.},
      author={Lutwak, E.},
      author={Yang, D.},
      author={Zhang, G.},
       title={The logarithmic {M}inkowski problem},
        date={2013},
     journal={J. Amer. Math. Soc.},
      volume={26},
      number={3},
       pages={831\ndash 852},
}

\bib{MR4156606}{article}{
      author={B\"{o}r\"{o}czky, K.~J.},
      author={Lutwak, E.},
      author={Yang, D.},
      author={Zhang, G.},
      author={Zhao, Y.},
       title={The {G}auss image problem},
        date={2020},
     journal={Comm. Pure Appl. Math.},
      volume={73},
      number={7},
       pages={1406\ndash 1452},
}

\bib{MR1038360}{article}{
      author={Caffarelli, L.},
       title={Interior {$W^{2,p}$} estimates for solutions of the
  {M}onge-{A}mp\`ere equation},
        date={1990},
     journal={Ann. of Math. (2)},
      volume={131},
       pages={135\ndash 150},
}

\bib{MR1005611}{article}{
      author={Caffarelli, L.~A.},
       title={Interior a priori estimates for solutions of fully nonlinear
  equations},
        date={1989},
     journal={Ann. of Math. (2)},
      volume={130},
      number={1},
       pages={189\ndash 213},
}

\bib{MR1038359}{article}{
      author={Caffarelli, L.~A.},
       title={A localization property of viscosity solutions to the
  {M}onge-{A}mp\`ere equation and their strict convexity},
        date={1990},
     journal={Ann. of Math. (2)},
      volume={131},
      number={1},
       pages={129\ndash 134},
}

\bib{MR0365607}{incollection}{
      author={Calabi, E.},
       title={Complete affine hyperspheres. {I}},
        date={1972},
   booktitle={Symposia {M}athematica, {V}ol. {X} ({C}onvegno di {G}eometria
  {D}ifferenziale, {INDAM}, {R}ome, 1971 \& {C}onvegno di {A}nalisi {N}umerica,
  {INDAM}, {R}ome, 1972)},
   publisher={Academic Press, London-New York},
       pages={19\ndash 38},
}

\bib{MR3953117}{article}{
      author={Chen, C.},
      author={Huang, Y.},
      author={Zhao, Y.},
       title={Smooth solutions to the {$L_p$} dual {M}inkowski problem},
        date={2019},
        ISSN={0025-5831},
     journal={Math. Ann.},
      volume={373},
      number={3-4},
       pages={953\ndash 976},
}

\bib{MR3818073}{article}{
      author={Chen, S.},
      author={Li, Q.-R.},
       title={On the planar dual {M}inkowski problem},
        date={2018},
     journal={Adv. Math.},
      volume={333},
       pages={87\ndash 117},
}

\bib{MR0423267}{article}{
      author={Cheng, S.~Y.},
      author={Yau, S.~T.},
       title={On the regularity of the solution of the {$n$}-dimensional
  {M}inkowski problem},
        date={1976},
     journal={Comm. Pure Appl. Math.},
      volume={29},
       pages={495\ndash 516},
}

\bib{MR1391950}{article}{
      author={Chou, K.-S.},
      author={Wang, X.-J.},
       title={Minkowski problems for complete noncompact convex hypersurfaces},
        date={1995},
     journal={Topol. Methods Nonlinear Anal.},
      volume={6},
      number={1},
       pages={151\ndash 162},
}

\bib{MR2254308}{article}{
      author={Chou, K.-S.},
      author={Wang, X.-J.},
       title={The {$L_p$}-{M}inkowski problem and the {M}inkowski problem in
  centroaffine geometry},
        date={2006},
     journal={Adv. Math.},
      volume={205},
       pages={33\ndash 83},
}

\bib{MR3617963}{book}{
      author={Figalli, A.},
       title={The {M}onge-{A}mp\`ere equation and its applications},
      series={Zurich Lectures in Advanced Mathematics},
   publisher={European Mathematical Society (EMS), Z\"{u}rich},
        date={2017},
}

\bib{MR1898210}{article}{
      author={Gardner, R.~J.},
       title={The {B}runn-{M}inkowski inequality},
        date={2002},
     journal={Bull. Amer. Math. Soc. (N.S.)},
      volume={39},
      number={3},
       pages={355\ndash 405},
}

\bib{MR3882970}{article}{
      author={Gardner, R.~J.},
      author={Hug, D.},
      author={Weil, W.},
      author={Xing, S.},
      author={Ye, D.},
       title={General volumes in the {O}rlicz-{B}runn-{M}inkowski theory and a
  related {M}inkowski problem {I}},
        date={2019},
     journal={Calc. Var. Partial Differential Equations},
      volume={58},
      number={1},
       pages={Paper No. 12, 35},
}

\bib{MR1933079}{article}{
      author={Guan, B.},
      author={Guan, P.},
       title={Convex hypersurfaces of prescribed curvatures},
        date={2002},
     journal={Ann. of Math. (2)},
      volume={156},
       pages={655\ndash 673},
}

\bib{MR2954620}{article}{
      author={Guan, P.},
      author={Li, J.},
      author={Li, Y.},
       title={Hypersurfaces of prescribed curvature measure},
        date={2012},
     journal={Duke Math. J.},
      volume={161},
       pages={1927\ndash 1942},
}

\bib{MR1961338}{article}{
      author={Guan, P.},
      author={Ma, X.-N.},
       title={The {C}hristoffel-{M}inkowski problem. {I}. {C}onvexity of
  solutions of a {H}essian equation},
        date={2003},
     journal={Invent. Math.},
      volume={151},
       pages={553\ndash 577},
}

\bib{MR2237290}{article}{
      author={Guan, P.},
      author={Ma, X.-N.},
      author={Zhou, F.},
       title={The {C}hristofel-{M}inkowski problem. {III}. {E}xistence and
  convexity of admissible solutions},
        date={2006},
     journal={Comm. Pure Appl. Math.},
      volume={59},
      number={9},
       pages={1352\ndash 1376},
}

\bib{guo2023lp}{article}{
      author={Guo, L.},
      author={Xi, D.},
      author={Zhao, Y.},
       title={The {$L_p$} chord {M}inkowski problem in a critical interval},
        date={2024},
     journal={Math. Ann.},
      volume={389},
      number={3},
       pages={3123\ndash 3162},
}

\bib{MR3148545}{article}{
      author={Henk, M.},
      author={Linke, E.},
       title={Cone-volume measures of polytopes},
        date={2014},
     journal={Adv. Math.},
      volume={253},
       pages={50\ndash 62},
}

\bib{MR3725875}{article}{
      author={Henk, M.},
      author={Pollehn, H.},
       title={Necessary subspace concentration conditions for the even dual
  {M}inkowski problem},
        date={2018},
     journal={Adv. Math.},
      volume={323},
       pages={114\ndash 141},
}

\bib{MR4284099}{article}{
      author={Huang, Y.},
      author={Liu, J.},
       title={Noncompact {$L_p$}-{M}inkowski problems},
        date={2021},
     journal={Indiana Univ. Math. J.},
      volume={70},
      number={3},
       pages={855\ndash 880},
}

\bib{MR3573332}{article}{
      author={Huang, Y.},
      author={Lutwak, E.},
      author={Yang, D.},
      author={Zhang, G.},
       title={Geometric measures in the dual {B}runn-{M}inkowski theory and
  their associated {M}inkowski problems},
        date={2016},
     journal={Acta Math.},
      volume={216},
      number={2},
       pages={325\ndash 388},
}

\bib{HYZ}{article}{
      author={Huang, Y.},
      author={Yang, D.},
      author={Zhang, G.},
       title={Minkowski problems for geometric measures},
        date={in press},
     journal={Bull. Amer. Math. Soc.},
}

\bib{MR3279551}{article}{
      author={Khovanski\u{\i}, A.},
      author={Timorin, V.},
       title={On the theory of coconvex bodies},
        date={2014},
     journal={Discrete Comput. Geom.},
      volume={52},
      number={4},
       pages={806\ndash 823},
}

\bib{LiYeZhu2023}{article}{
      author={Li, N.},
      author={Ye, D.},
      author={Zhu, B.},
       title={The dual {M}inkowski problem for unbounded closed convex sets},
        date={2024},
     journal={Math. Ann.},
      volume={388},
      number={2},
       pages={2001\ndash 2039},
}

\bib{MR4055992}{article}{
      author={Li, Q.-R.},
      author={Sheng, W.},
      author={Wang, X.-J.},
       title={Flow by {G}auss curvature to the {A}leksandrov and dual
  {M}inkowski problems},
        date={2020},
     journal={J. Eur. Math. Soc. (JEMS)},
      volume={22},
      number={3},
       pages={893\ndash 923},
}

\bib{MR4127893}{article}{
      author={Liu, Y.},
      author={Lu, J.},
       title={A flow method for the dual {O}rlicz-{M}inkowski problem},
        date={2020},
     journal={Trans. Amer. Math. Soc.},
      volume={373},
      number={8},
       pages={5833\ndash 5853},
}

\bib{MR1316557}{article}{
      author={Lutwak, E.},
      author={Oliker, V.},
       title={On the regularity of solutions to a generalization of the
  {M}inkowski problem},
        date={1995},
     journal={J. Differential Geom.},
      volume={41},
       pages={227\ndash 246},
}

\bib{XLYZ}{article}{
      author={Lutwak, E.},
      author={Xi, D.},
      author={Yang, D.},
      author={Zhang, G.},
       title={Chord measures in integral geometry and their {M}inkowski
  problems},
        date={2024},
     journal={Comm. Pure Appl. Math.},
      volume={77},
      number={7},
       pages={3277\ndash 3330},
}

\bib{MR2067123}{article}{
      author={Lutwak, E.},
      author={Yang, D.},
      author={Zhang, G.},
       title={On the {$L_p$}-{M}inkowski problem},
        date={2004},
     journal={Trans. Amer. Math. Soc.},
      volume={356},
       pages={4359\ndash 4370},
}

\bib{MR1511220}{article}{
      author={Minkowski, H.},
       title={Volumen und {O}berfl\"{a}che},
        date={1903},
     journal={Math. Ann.},
      volume={57},
      number={4},
       pages={447\ndash 495},
}

\bib{MR4455361}{article}{
      author={Mui, S.},
       title={On the {$L^p$} {A}leksandrov problem for negative {$p$}},
        date={2022},
     journal={Adv. Math.},
      volume={408},
       pages={Paper No. 108573, 26},
}

\bib{MR0180763}{book}{
      author={Pogorelov, A.~V.},
       title={Monge-{A}mp\`ere equations of elliptic type},
     edition={Russian},
   publisher={P. Noordhoff Ltd., Groningen},
        date={1964},
}

\bib{MR0478079}{book}{
      author={Pogorelov, A.~V.},
       title={The {M}inkowski multidimensional problem},
   publisher={V. H. Winston \& Sons, Washington, D.C.; Halsted Press [John
  Wiley \&\ Sons], New York-Toronto-London},
        date={1978},
}

\bib{MR3335407}{article}{
      author={Ritor\'{e}, M.},
      author={Vernadakis, E.},
       title={Isoperimetric inequalities in {E}uclidean convex bodies},
        date={2015},
     journal={Trans. Amer. Math. Soc.},
      volume={367},
      number={7},
       pages={4983\ndash 5014},
}

\bib{MR3810252}{article}{
      author={Schneider, R.},
       title={A {B}runn-{M}inkowski theory for coconvex sets of finite volume},
        date={2018},
     journal={Adv. Math.},
      volume={332},
       pages={199\ndash 234},
}

\bib{MR4264230}{article}{
      author={Schneider, R.},
       title={Minkowski type theorems for convex sets in cones},
        date={2021},
     journal={Acta Math. Hungar.},
      volume={164},
      number={1},
       pages={282\ndash 295},
}

\bib{MR4501642}{book}{
      author={Schneider, R.},
       title={Convex cones---geometry and probability},
      series={Lecture Notes in Mathematics},
   publisher={Springer, Cham},
        date={2022},
      volume={2319},
}

\bib{schneiderpseudocones}{article}{
      author={Schneider, R.},
       title={Pseudo-cones},
        date={2024},
     journal={Adv. in Appl. Math.},
      volume={155},
       pages={Paper No. 102657, 22},
}

\bib{MR4754947}{article}{
      author={Schneider, R.},
       title={A weighted {M}inkowski theorem for pseudo-cones},
        date={2024},
     journal={Adv. Math.},
      volume={450},
       pages={Paper No. 109760, 26},
}

\bib{vadim1}{article}{
      author={Semenov, V.},
       title={The discrete {G}auss image problem},
        date={2024},
      eprint={2210.16974},
        note={arXiv:2210.16974},
}

\bib{MR4790421}{article}{
      author={Semenov, V.},
       title={The {G}auss image problem with weak {A}leksandrov condition},
        date={2024},
     journal={J. Funct. Anal.},
      volume={287},
      number={11},
       pages={Paper No. 110611, 35},
}

\bib{vadim2}{article}{
      author={Semenov, V.},
       title={The uniqueness of the {G}auss image measure},
        date={2024},
      eprint={2305.01779},
        note={arXiv:2305.01779},
}

\bib{MR1901250}{article}{
      author={Stancu, A.},
       title={The discrete planar {$L_0$}-{M}inkowski problem},
        date={2002},
     journal={Adv. Math.},
      volume={167},
       pages={160\ndash 174},
}

\bib{MR2019226}{article}{
      author={Stancu, A.},
       title={On the number of solutions to the discrete two-dimensional
  {$L_0$}-{M}inkowski problem},
        date={2003},
     journal={Adv. Math.},
      volume={180},
       pages={290\ndash 323},
}

\bib{MR4484788}{article}{
      author={Xu, Y.},
      author={Li, J.},
      author={Leng, G.},
       title={Dualities and endomorphisms of pseudo-cones},
        date={2023},
     journal={Adv. in Appl. Math.},
      volume={142},
       pages={Paper No. 102434, 31},
}

\bib{MR4565712}{article}{
      author={Yang, J.},
      author={Ye, D.},
      author={Zhu, B.},
       title={On the {$L_p$} {B}runn-{M}inkowski theory and the {$L_p$}
  {M}inkowski problem for {$C$}-coconvex sets},
        date={2023},
     journal={Int. Math. Res. Not. IMRN},
      number={7},
       pages={6252\ndash 6290},
}

\bib{MR3880233}{article}{
      author={Zhao, Y.},
       title={Existence of solutions to the even dual {M}inkowski problem},
        date={2018},
     journal={J. Differential Geom.},
      volume={110},
      number={3},
       pages={543\ndash 572},
}

\bib{MR3356071}{article}{
      author={Zhu, G.},
       title={The centro-affine {M}inkowski problem for polytopes},
        date={2015},
     journal={J. Differential Geom.},
      volume={101},
       pages={159\ndash 174},
}

\end{biblist}
\end{bibdiv}

\end{document}